\documentclass[12pt]{article}

\usepackage{amssymb}   
\usepackage{amsthm}    
\usepackage{amsmath}   
\usepackage{stmaryrd}  
\usepackage{titletoc}  
\usepackage{mathrsfs}  
\usepackage{graphicx}

\vfuzz2pt 
\hfuzz2pt 

\newlength{\defbaselineskip}
\setlength{\defbaselineskip}{\baselineskip}
\newcommand{\setlinespacing}[1]%
           {\setlength{\baselineskip}{#1 \defbaselineskip}}

\theoremstyle{plain}
\newtheorem{thm}{Theorem}[section]
\newtheorem{cor}[thm]{Corollary}
\newtheorem{lem}[thm]{Lemma}
\newtheorem{prop}[thm]{Proposition}

\theoremstyle{definition}
\newtheorem{defn}{Definition}[section]
\newtheorem{ass}{Assumption}[section]
\newtheorem{rmk}{Remark}[section]

\newcommand{\eps}{\varepsilon}

\newcommand{\cL}{\mathcal{L}}
\newcommand{\cM}{\mathcal{M}}
\newcommand{\cB}{\mathcal{B}}
\newcommand{\cS}{\mathcal{S}}
\newcommand{\cF}{\mathcal {F}}

\newcommand{\fS}{\mathfrak{S}}
\newcommand{\bH}{\mathbb{H}}
\newcommand{\bP}{\mathbb{P}}
\newcommand{\bR}{\mathbb{R}}
\newcommand{\bD}{\mathbb{D}}
\newcommand{\sF}{\mathscr{F}}
\newcommand{\sP}{\mathscr{P}}
\newcommand{\sD}{\mathscr{D}}
\newcommand{\sH}{\mathscr{H}}
\newcommand{\sS}{\mathscr{S}}

\newcommand{\rrow}{\rightarrow}

\textwidth =16cm \topmargin =-18mm \textheight =23.5cm \oddsidemargin=5pt
\evensidemargin=0pt

\makeatletter\@addtoreset{equation}{section} \makeatother
\begin{document}

\title{$L^{p}$ Theory for Super-parabolic Backward Stochastic
Partial Differential Equations in the Whole Space\footnotemark[1]}

\author{Kai Du\footnotemark[2],  \quad Jinniao
Qiu\footnotemark[2],\quad  and\quad Shanjian
Tang\footnotemark[2]~\footnotemark[3] }

\footnotetext[1]{Supported by NSFC Grant \#10325101, by Basic
Research Program of China (973 Program)  Grant \# 2007CB814904, by
the Science Foundation of the Ministry of Education of China Grant
\#200900071110001, and by WCU (World Class University) Program
through the Korea Science and Engineering Foundation funded by the
Ministry of Education, Science and Technology (R31-2009-000-20007).}

\footnotetext[2]{Department of Finance and Control Sciences, School of
Mathematical Sciences, Fudan University, Shanghai 200433, China.
\textit{E-mail}: \texttt{kdu@fudan.edu.cn} (Kai Du),
\texttt{071018032@fudan.edu.cn} (Jinniao Qiu), \texttt{sjtang@fudan.edu.cn}
(Shanjian Tang).}

\footnotetext[3]{Graduate Department of Financial Engineering, Ajou
University, San 5, Woncheon-dong, Yeongtong-gu, Suwon, 443-749,
Korea.}

\maketitle

\begin{abstract}
This paper is concerned with semi-linear backward stochastic partial
differential equations (BSPDEs for short) of super-parabolic type.
An $L^p$-theory  is given for the Cauchy problem of BSPDEs,
separately for  the case of $p\in (1,2]$ and for the case of $p\in
(2, \infty)$. A comparison theorem is also addressed.
\end{abstract}

{\bf AMS Subject Classification:} 60H15; 35R60; 93E20

{\bf Keywords:} Backward stochastic differential equation,
Stochastic
 partial differential equation, Backward stochastic partial differential
equation, Bessel potentials

\section{Introduction}
Since Bismut's pioneering work \cite{Bismut_76,Bismut_78,Bismut_78_2} and
Pardoux and Peng's seminal work \cite{ParPeng_90}, the theory of backward
 stochastic differential equations (BSDEs) is rather complete now. See, among others,
 El Karoui et al.~\cite{Karoui_Peng_Quenez}, and Delbaen and Tang
 \cite{DelbaenTang_10} for a rather general $L^p$ theory for BSDEs.
  As a natural generalization of BSDEs,
  backward stochastic partial differential equations (BSPDEs) arise
in many applications of probability theory and stochastic processes, for
instance in the optimal control of processes with incomplete information,
as an adjoint equation of the Duncan-Mortensen-Zakai filtration equation
(for instance, see
\cite{Bensousan_83,Hu_Ma_Yong02,Hu_Peng_91,Tang_98,Zhou_92,Zhou_93}), and
naturally in the dynamic programming theory fully nonlinear BSPDEs as the
so-called  backward stochastic Hamilton-Jacobi-Bellman equations, are also
introduced in the study of controlled non-Markovian processes (see Peng
\cite{Peng_92} and Englezos and Karatzas \cite{EnglezosKaratzas09}).

In this paper, we consider the following semi-linear BSPDEs:
\begin{equation}\label{BSPDE intro 1}
  \left\{\begin{array}{l}
  \begin{split}
  -du(t,x)=&[\cL(t,x)u(t,x)+\cM^r(t,x) v^r (t,x)+F(u,v,t,x)]dt  \\
           &~-v^{r}(t,x)dW_{t}^{r}, \quad
                     (t,x)\in[0,T]\times\bR^d;\\
    u(T,x)=&G(x), \quad x\in\bR^d.
    \end{split}
  \end{array}\right.
\end{equation}
Here and throughout this paper, we denote
 $$   \cL(t,x):= a^{ij}(t,x)\frac{\partial^2}{\partial x^i\partial
 x^j},\quad
    \cM^r(t,x):=\sigma^{jr}(t,x)\frac{\partial}{\partial x^j}, \quad
r=1,2,\dots,m. $$ We use the Einstein summation convention and fix
$T\in(0,\infty)$ as a finite
 deterministic time, which can be replaced by any bounded stopping time.

To the above BSPDEs, the method of stochastic flows was developed by
Tang \cite{Tang_05} which gives a probabilistic point of view and
also gives classical solutions to BSPDEs~\eqref{BSPDE intro 1}. On
the other hand, the $L^2$ theory for BSPDEs has been established in
the framework of weak solutions (see
\cite{DuMeng09,Hu_Ma_Yong02,Hu_Peng_91,Zhou_92,Zhou_93}, for
example).

Still in the framework of weak solutions, we establish in this paper an
$L^p$-theory for BSPDE \eqref{BSPDE intro 1} which seems to be the first
study for the $L^p$-theory of BSPDEs. Motivated by Krylov's semianl work
\cite{Kryl96,Krylov_99} on forward stochastic partial differential
equations, we consider BSPDE as the generalized backward Kolmogorov
equation and establish an $L^p$-theory which includes as a particular case
the $L^p$ theory $(1<p\leq 2)$ for deterministic parabolic partial
differential equations (PDEs for short).

Our results are based on the duality between BSPDEs and stochastic partial
differential equations (SPDEs). In response to the requirement that $p\geq
2$ in the $L^p$ theory of SPDEs established by Krylov
\cite{Kryl96,Krylov_99} we require $p\in (1,2]$ in our $L^p$ theory for
BSPDEs.

This paper is organized as follows. In Section 2 we introduce the
notions and define some spaces. We discuss a kind of Banach
space-valued BSDEs in Section 3. In Section 4 we construct a
stochastic Banach space $\sH_p^n$ which plays the same role as
spaces $W^{1,2}_p$ in the theory of second-order parabolic PDEs and
we also give some basic properties of this space there. In Section 5
we present the $L^p$-theory of BSPDEs in the whole space for $p\in
(1,2]$. Specifically, we give the definition of the $L^p$ solutions
and list the assumptions. We first solve the BSPDEs with
constant-field-valued leading coefficients and then solve the BSPDEs
for the general case. In Section 6 we discus two related topics: a
comparison theorem and an $L^p$-theory for $p>2$.

\section{Preliminaries}
In most of this work, we shall denote by $|\cdot|$ (respectively,
$<\cdot,\cdot>$) the norm (respectively, scalar product) in
finite-dimension Hilbert space such as $\bR,\bR^k,\bR^{k\times l}$ where
$k,l$ are positive integers and
$$|x|:=\left( \sum_{i=1}^k x_i^2  \right)^{\frac{1}{2}} \quad
\textrm{and}\quad  |y|:=\left(\sum_{i=1}^k\sum_{j=1}^l y_{ij}^2
\right)^{\frac{1}{2}} \quad \textrm{for}~ (x,y)\in \bR^k \times
\bR^{k\times l}.
$$
Let $(\Omega,\sF,\{\sF_t\}_{t\geq0},\bP)$ be a complete filtered
probability space on which is defined a $m$-dimensional standard Brownian
motion $W=\{W(t):t\in[0,T]\}$ such that $\{\sF_t\}_{t\geq0}$ is the natural
filtration generated by $W$ and augmented by all the $\bP$-null sets in
$\sF$. And we denote by $\sP$ the $\sigma$-Algebra of the predictable sets
on $\Omega\times[0,T]$ associated with $\{\sF_t\}_{t\geq0}$.

If $X=(X_{t})_{t\in [0,T]}$ is an $\bR^{k}$-valued, adapted and
 continuous processes, we denote by $X_{*}$ or $\sup_t |X_t|$
 where $|\cdot|$ denotes the Euclidean norm on $\bR^{k}.$ And for any $p\in
 (1,\infty)$, $\cS ^p (\bR^k)$ denotes the set of all the $\bR^k$-valued,
 adapted and continuous processes $(X_{t})_{t\in [0,T]}$ such
 that
 $$\|X\|_{\cS ^p}:= E [\sup_t |X_t|^p] < \infty.$$

We denote by $C^{\infty}_{c}$ the set of all infinitely
differentiable functions of compact supports on  $\bR^{d}$ and by
$\mathscr{D}$ the space of real-valued Schwartz distributions on
$C^{\infty}_{c}$. And also, on $\bR^d$ we denote by $\sS$ the set of
all the Schwartz functions and by $\sS'$ the set of all the tempered
distributions. Note that $C^{\infty}_{c}$ and $\sS$ are endowed with
matching topologies (see, for instance \cite{Loukas_book}). We shall
denote by $(\cdot,\cdot)$ not only the duality between $\mathscr{D}$
and $C^{\infty}_{c}$ but also the duality between $\sS$ and $\sS'.$
Then the Fourier transform $\cF(f)$ of $f\in \sS'$ is given by
$$\cF(f)(\xi)=(2\pi)^{-d/2}\int_{\bR^d}e^{-\sqrt{-1}<x,\xi>}f(x)dx,
~~~\xi\in \bR^d,$$ and the inverse Fourier transform $\cF^{-1}(f)$ is given
by
$$
    \cF(f)(x)=(2\pi)^{-d/2}\int_{\bR^d}e^{\sqrt{-1}<x,\xi>}f(\xi)d\xi,
~~~x\in \bR^d. $$
 It is well known that both $\cF$ and $\cF^{-1}$ map $\sS'$ onto itself. As
usual, for any $s\in \bR$ and $f\in \sS'$, we denote
$I_{s}(f):=(1-\Delta)^{s/2}f=\cF^{-1}((1+|\xi|^2)^{s/2}\cF(f)(\xi)).$

 For given $p\in(1,\infty)$ and $n\in
(-\infty,\infty)$, we denote by $H ^{n}_{p}$ the space of Bessel
potentials, that is
$$H ^{n}_{p}:=\{\phi\in \sS':(1-\Delta)^{\frac{n}{2}}\phi\in L^{p}(\bR ^{d})\}$$
with the Sobolev norm
$$\|\phi\|_{n,p}:=\|(1-\Delta)^{\frac{n}{2}}\phi\|_{p}, ~~ \phi\in
H^{n}_{p},$$ where $\|\cdot\|_{p}$ is the norm in $L^{p}(\bR^d)$. It
is well known that $H ^{n}_{p}$ is a Banach space with the norm
$\|\cdot\|_{n,p}$ and the set $C^{\infty}_{c}$ is dense in $H
^{n}_{p}$. For any $p\in(1,\infty)$ and $n\in \bR,$ we denote by
$(\cdot,\cdot)$ the dual pairing between $H_p^n$ and $H_{p'}^{-n}$
where $1/p'+1/p=1,$ i.e., for any $(u, v)\in H^{n}_{p}\times
H^{-n}_{p'}$
$$(u,v)=((1-\Delta)^{\frac{n}{2}}u,(1-\Delta)^{-\frac{n}{2}}v)
    =\int_{\bR^{d}}(1-\Delta)^{\frac{n}{2}}u(x)(1-\Delta)^{-\frac{n}{2}} v(x)\, dx$$
where the last integral is a usual Lebesgue integral.

Define the set of multi-indices
$$\mathcal {A}:=\{\alpha=(\alpha_1,\dots,\alpha_d): \alpha_1, \dots, \alpha_d \textrm{
are nonnegative integers}\}.$$ For any $\alpha\in \mathcal{A}$ and
$x=(x^1,\dots,x^d)\in \bR^d,$ denote
$$ |\alpha|=\sum_{i=1}^d
\alpha_i,~~~~D^{\alpha}:=\frac{\partial^{|\alpha|}}
{\partial x^{\alpha_1}\partial x^{\alpha_2}\cdots\partial x^{\alpha_d}}. $$

In contrast to $H_p^n,$ we introduce the following so-called Besov
space of functions (c.f \cite{TRiebel_83} or \cite{Triebel_92}).

\begin{defn}
  Let $s>0,$ $p\in (1,\infty),$ and $q\in [1,\infty).$ Define
  \begin{eqnarray*}
  \begin{split}
    B_{p,q}^s=&\biggl\{ f\in L^p(\bR^d):\|f\|_{B_{p,q}^s}=\|f\|_{H^{[s]^-}_p}        \\
            &+\sum_{|\alpha|=[s]^-}\left(\int_{\bR^d}
            |h|^{-\{s\}^+q}\|D^{\alpha}f(\cdot+2h)
            -2D^{\alpha}f(\cdot+h)+D^{\alpha}f(\cdot)\|_p ^q \frac{dh}{|h|^d}
            \right)^{1/q}<\infty  \biggr\}
  \end{split}
\end{eqnarray*}
where $s=[s]^-+\{s\}^+,$ $[s]^-$ is an integer and $\{s\}^+\in
(0,1].$
\end{defn}
Let $\sigma>0,$ $p\in (1,\infty),$ $q\in [1,\infty),$ and $s\in\bR$ such
that $\sigma-s>0.$ Then $I_s (B_{p,q}^{\sigma})=B_{p,q}^{\sigma-s}.$ In
fact, we can introduce spaces $B_{p,q}^s$ with $s\leq 0$ by defining
$B_{p,q}^s=I_{-s+1}(B_{p,q}^1),$ although we prefer to define the Besov
space through the Littlewood-Paley decomposition (for instance, see
\cite{Triebel_92}). As to the specific structure and properties of Besov
space, see \cite{Triebel_92} or \cite{TRiebel_83}. In this paper,
 only the space $B_{p,p}^n$ is involved for $p\in(1,\infty)$ and $n\in
\bR.$

Denote by $\fS$ the set of all $\sS'$-valued functions defined on
$\Omega\times [0,T]$ such that, for any $u\in \fS$ and $\phi\in \sS,$ the
function $(u,\phi)$ is $\sP$-measurable.

For $p\in(1,\infty),$ we define
$\bH_p^0:=L^p(\Omega\times[0,T]\times\bR^d,\sP\times\cB(\bR^d),\bR).$
 Denote by $\bH_{p,2}^0$ the set of the functions which are defined on
 $\Omega\times[0,T]\times\bR^d$ and  $\sP\times\cB(\bR^d)$-measurable such
 that $$ E\left[\int_{\bR^d}\left(\int_{0}^{T}
    |u(t,x)|^{2}dt\right ) ^{\frac{p}{2}}dx\right]<\infty,~~\forall~ u\in\bH_{p,2}^0.$$

Observe that every element of $\bH_p^0$ can be considered as an
$H_p^0$-valued, $\sP$-measurable process. For any $n\in\bR,$ we define
$$\bH_p^n=\{f\in \fS:~(1-\Delta)^{\frac{n}{2}}f\in \bH_p^0 \},$$
equipped with the norm
$$\|f\|_{\bH^n_p}:=\left(E\left[ \int_0^T \int_{\bR^d}|
(1-\Delta)^{\frac{n}{2}}f(t,x)|^pdxdt\right]\right)^{1/p}.$$

\begin{defn}
  Let $p\in (1,\infty)$ and $n\in \bR.$ Define
  $$\bH_{p,2}^{n}=\left\{u\in\fS:~(1-\Delta)^{\frac{n}{2}}u\in\bH_{p,0}^0   \right\}  $$
    equipped with the norm
    $$\|u\|_{\bH_{p,2}^{n}}:=\left(E\left[\int_{\bR^d}\left[\int_{0}^{T}
    |(1-\Delta)^{\frac{n}{2}}u(t,x)|^{2}dt\right ] ^{\frac{p}{2}}dx\right]\right)^{1/p}
    .$$
\end{defn}
\begin{defn}
    Let $p\in (1,\infty)$ and $n\in \bR.$ For a function $u\in
    \bH_{p,2}^{n},$ we write $u\in\bH_{p,\infty}^n$ if

    (i) there exists $A(u)\in\sF_T\times\cB(\bR^d)$,
    $\bP\times\mathfrak{M}(A(u))=0$ where $\mathfrak{M}(\cdot)$
       denotes the Lebesgue measure on $\bR^d$, such that
        for any
        $(\omega,x)\in \bR^d\times\Omega \setminus A(u),$
        $(1-\Delta)^{n/2}u(\cdot,x)$ is continuous on $[0,T];$

    (ii) $\|u\|_{\bH_{p,\infty}^{n}}:=
        \big(E\big[\int_{\bR^{d}}\sup_{t\in[0,T]}
        |(1-\Delta)^{\frac{n}{2}}u(t,x)|^{p}dx\big]\big)^{1/p}<\infty.$
\end{defn}

When we treat the general $\bR^k$-valued function $u$ for  any
integer $k>1$, we
 still say $u\in \bH_p^{n}$ if $u^l\in \bH_p^n$ for $l=1,2,\dots,k.$
 In this way, we generalize the real-valued function space $\bH_p^n$ to
 $\bR^k$-valued function space. And further, we define the norm
 $$\|u\|_{\bH_p^n}=\left(E\left[ \int_0^T \int_{\bR^d}|
(1-\Delta)^{\frac{n}{2}}u(t,x)|^pdxdt\right]\right)^{1/p}.$$

By this means, not only can we generalize spaces $H_p^n,$ $\bH_p^n,$
$\bH_{p,2}^n$ and $\bH_{p,\infty}^n$ from real-valued function
spaces to any $\bR^k$-valued ones, but also we can generalize these
spaces from $\bR^k$-valued to any Hilbert space-valued function
spaces. And we do it when we need it.

\begin{rmk}\label{rmk setion 1}
    One can check that the spaces $\bH_p^n,~\bH_{p,2}^n$ and
    $\bH_{p,\infty}^n$ are all Banach spaces under the norms
    $\|\cdot\|_{\bH_p^n},$ $\|\cdot\|_{\bH_{p,2}^n},$ and
    $\|\cdot\|_{\bH_{p,\infty}^n},$ respectively. Moreover, for any $p\in (1,\infty)$ and
    $n\in \bR,$ $\bH_p^n$
    is a reflexive Banach space whose dual space is $\bH_{p/(p-1)}^{-n}$, and it coincides
    with the space $\bH_p^n(T)$ defined in \cite{Krylov_99} and
     \cite{Kryl96}. On the other hand, for $s\in\bR,$ the operator $(1-\Delta)^{s/2}$
    maps isometrically $H_p^n$ to $H_p^{n-s}$ and the same is true
    for spaces $\bH_p^n,~\bH_{p,2}^n,$ and $\bH_{p,\infty}^n.$
\end{rmk}
 In particular, as to the spaces $\bH_p^n$ and $\bH_{p,2}^n,$
 we have the following lemma£¬ whose proof is similar to that of
 \cite[Theorem 3.10]{Krylov_99}.

\begin{lem}\label{lem finite approximation H_pn} Let $p\in (1,
\infty)$ and $n\in \bR$. For $g\in\bH ^{n}_{p}$ ($\bH_{p,2}^n$,
respectively), there exits a sequence $\{g_{j},~j=1,2\dots\}$
    in $\bH^{n}_{p}$ ($\bH_{p,2}^n$, respectively)
     such that $\|g-g_{j}\|_{\bH ^{n}_{p}}\rightarrow
    0$ ($\|g-g_{j}\|_{\bH ^{n}_{p,2}}\rightarrow 0 ,$ respectively) as $j\rightarrow \infty$ and
$$g_{j}= \sum_{i=1}^{j}
         \mathbb{I} _{(\tau _{i-1}^{j},\tau _{i}^{j}]}(t)g_{j}^{i}(x),$$
where $g_{j}^{i}\in C^{\infty}_{c}$ and $\tau _{i}^{j}$
   are stopping times such that $\tau _{i-1}^{j}\leq\tau
 _{i}^{j}\leq T.$
\end{lem}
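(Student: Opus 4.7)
My plan is to build the approximation in three successive stages, and I describe it for $\bH_p^n$; the case $\bH_{p,2}^n$ will follow by the same scheme with $L^p(\bR^d;L^2([0,T]))$ in place of $L^p(\Omega\times[0,T];H_p^n)$.

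First I would use the density of $C_c^\infty$ in $H_p^n$ together with the standard density of elementary processes in Bochner-type spaces to approximate $g$ in $\bH_p^n$ by finite sums
$$
\tilde g(t,x) \;=\; \sum_{k=1}^K \xi_k(\omega)\,\phi_k(x)\,\mathbb{I}_{(s_k,t_k]}(t),
$$
with $\phi_k\in C_c^\infty$, deterministic $0\le s_k<t_k\le T$, and bounded $\sF_{s_k}$-measurable $\xi_k$. Approximating each $\xi_k$ uniformly by simple random variables of the form $\sum_\ell a_\ell\mathbb{I}_{E_\ell}$ with $E_\ell\in\sF_{s_k}$ and absorbing the scalars into the spatial factor (note $a_\ell\phi_k\in C_c^\infty$), I reduce the problem to a finite sum of basic terms $\phi(x)\,\mathbb{I}_E(\omega)\,\mathbb{I}_{(s,t]}(r)$ with $\phi\in C_c^\infty$, $E\in\sF_s$, and deterministic $s\le t$.

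Next I would eliminate each $\omega$-indicator via the gating identity
$$
\mathbb{I}_E(\omega)\,\mathbb{I}_{(s,t]}(r) \;=\; \mathbb{I}_{(\sigma,\tau]}(r),\qquad
\sigma:=s\mathbb{I}_E+T\mathbb{I}_{E^c},\ \tau:=t\mathbb{I}_E+T\mathbb{I}_{E^c},
$$
noting that $\sigma$ and $\tau$ are stopping times precisely because $E\in\sF_s$. After this step the approximation takes the form $\sum_{i=1}^N\psi_i(x)\,\mathbb{I}_{(\sigma_i,\rho_i]}(t)$ with $\psi_i\in C_c^\infty$ and stopping times $\sigma_i\le\rho_i\le T$.

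Finally, to linearize the family of stochastic intervals into a single increasing chain, I would sort the endpoints $\{\sigma_i,\rho_i\}_{i\le N}$ pathwise in nondecreasing order as $\tau_0^j\le\tau_1^j\le\cdots\le\tau_j^j\le T$; each $\tau_i^j$ is a stopping time because order statistics of finitely many stopping times are themselves stopping times. On each resulting interval $(\tau_{i-1}^j,\tau_i^j]$ the running sum is constant in $t$ and takes an $\sF_{\tau_{i-1}^j}$-measurable value chosen from a finite list of elements of $C_c^\infty$, namely the partial sums of those $\psi_i$ whose stochastic interval is active just after $\tau_{i-1}^j$; one further application of the gating identity, now relative to $\sF_{\tau_{i-1}^j}$, refines this into the required representation $g_j=\sum_{i=1}^{j} g_j^i(x)\,\mathbb{I}_{(\tau_{i-1}^j,\tau_i^j]}(t)$ with $g_j^i\in C_c^\infty$. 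The hardest part should be the bookkeeping that keeps every operation simultaneously compatible with the $\bH_p^n$-norm, predictability, and the stopping-time structure; that compatibility rests on the isometry $(1-\Delta)^{n/2}\colon H_p^n\to H_p^0$, the density of $C_c^\infty$ in $H_p^n$, and the stability of $C_c^\infty$ under addition and scalar multiplication.
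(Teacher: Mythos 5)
Your proposal is correct and follows essentially the argument the paper itself defers to (Krylov's Theorem 3.10 in \cite{Krylov_99}): reduce to $n=0$ by the isometry $(1-\Delta)^{n/2}$, use density of elementary predictable processes with $C^{\infty}_{c}$ spatial parts, apply the gating identity that turns $\sF_s$-measurable indicators into stopping times, and finish with a pathwise sorting and within-slot refinement that preserves the chain $\tau_0^j\leq\cdots\leq\tau_j^j$. The two points that need (and survive) checking are that the active index set on each slot $(\tau_{(k-1)},\tau_{(k)}]$ is $\sF_{\tau_{(k-1)}}$-measurable, via $\{S\leq T\},\{S>T\}\in\sF_T$ for stopping times $S,T$, and that the density of elementary predictable processes also holds in the mixed-norm space $L^p(\Omega\times\bR^d;L^2([0,T]))$ underlying $\bH_{p,2}^n$.
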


For any $t\in[0,T)$, define
$$\|u\|_{\bH_p^n(t)}=\|u\mathbb{I}_{[t,T]}\|_{\bH_p^n}
\quad \textrm{ for }u\in \bH_p^n.$$ In the same way, we define
$\|\cdot\|_{\bH_{p,2}^n(t)}$ in $\bH_{p,2}^n$ and
$\|\cdot\|_{\bH_{p,\infty}^n(t)}$
 in $\bH_{p,\infty}^n$.

%
For an element $u$ of spaces like $\bH_p^n$, if it has a
modification of higher regularity, then it is always considered to
be this modification. However, elements of spaces like $\bH_p^n$
belong to $H_p^n$  only for almost all $(t,\omega)$, not necessarily
for all $(t,\omega)\in [0,T]\times\Omega$.

\section{Banach space-valued BSDEs}

This section is concerned with Banach space-valued BSDEs. Unless stated
otherwise, we assume $p\in (1,\infty)$ and $n\in \bR$ throughout this
section.
 For $(F,G)\in\bH
^{n}_{p}\times L^{p}(\Omega,\sF_{T},H_{p}^{n})$, consider the BSDE
\begin{equation}\label{bsde}
  \left\{\begin{array}{l}
    \begin{split}
      -du(t,x)=F(t,x)dt-v^{k}(t,x)dW_t^{k},~~~(t,x)\in[0,T]\times \bR^{d},
    \end{split}\\
    \begin{split}
      u(T,x)=G(x),~~~~~~~~~x\in \bR^{d}.
    \end{split}
  \end{array}\right.
\end{equation}
Or, equivalently
$$u(t,x)=G(x)+\int_{t}^{T}F(s,x)ds-\int_{t}^{T}v^{k}(s,x)dW^{k}_{s},
        ~~(t,x)\in[0,T]\times\bR^{d}.$$

\begin{defn}\label{S bsde}
   Assume that $(F,G)\in\bH
  ^{n}_{p}\times L^{p}(\Omega,\sF_{T},H_{p}^{n})$ with $p\in (1,
\infty)$ and $n\in \bR$. We say $(u,v)\in\bH_{p}^{n}\times
\bH_{p,2}^{n}$ is a
   solution of \eqref{bsde} if for any $ \phi \in C^{\infty}_{c}$
   and $\tau\in[0, T],$ we have
\begin{eqnarray}\label{S bsde1}
    \begin{split}
    (u(\tau,\cdot),\phi)=(G,\phi)+
            \int_{\tau}^{T}(F(s,\cdot),\phi)\, ds
                    -\int_{\tau}^{T}(v^{l}(s,\cdot),\phi)\, dW^{l}_{s},\quad a.s..
  \end{split}
\end{eqnarray}
\end{defn}

\begin{rmk}\label{rmk of bsde}
    If $(u,v)\in\bH_{p}^{n}\times \bH_{p,2}^{n}$ is a
   solution to \eqref{bsde}, then for any $\phi\in H_{p'}^{-n},$
\begin{eqnarray*}
  \begin{split}
   &E\left[\max_{t\in [0,T]}|\int_0^T(v^l,\phi)\, dW_s^l  |\right]
    \\
   \leq & C
     E\left[\left( \int_0^T|(v(s,\cdot),\phi)|^2\, ds \right)^{1/2}  \right]
     \quad (\textrm{using the BDG inequality})   \\
    =& C E\left[\left( \int_0^T|
     \int_{\bR^d}(1-\Delta)^{-n/2}\phi(x)(1-\Delta)^{n/2}v(s,x)dx
     |^2\, ds \right)^{1/2}  \right]\\
    &(\textrm{using Minkowski inequality}) \\
     \leq &C E\left[ \int_{\bR^d}\left(\int_0^T|
     (1-\Delta)^{-n/2}\phi(x)(1-\Delta)^{n/2}v(s,x)
     |^2ds \right)^{1/2}dx  \right]\\
    =&C E\left[ \int_{\bR^d}\left(\int_0^T|
     (1-\Delta)^{n/2}v(s,x)
     |^2ds \right)^{1/2}(1-\Delta)^{-n/2}\phi(x)\, dx  \right]\\
    \leq &C \|v\|_{\bH_{p,2}^{n}}\|\phi\|_{-n,p'}
  \end{split}
\end{eqnarray*}
where $p'+p=1.$ So, the process
$$\int_0^t(v^l(s,\cdot),\phi)dW^l_s
  , \quad t\in[0,T]$$ is a continuous martingale.
  Note that, throughout the paper, unless stated otherwise,
    $C$ is a positive
  constant and  $C(\alpha,\beta,\cdots,\gamma)$ is a  constant
     only depending on $\alpha,\beta,\cdots,$ and $\gamma$.
\end{rmk}



\begin{lem}\label{lem BSDE}
  Assume that $(F,G)\in\bH^{n}_{p}\times
  L^{p}(\Omega,\sF_{T},H_{p}^{n})$ with $p\in (1,
\infty)$ and $n\in \bR$.
  We have

  (i) equation \eqref{bsde} has a unique solution $(u,v)\in(\bH_{p}^{n}\cap
  \bH_{p,\infty}^{n})\times\bH_{p,2}^{n}$ which satisfies the following
  inequality
    \begin{equation}\label{Estimate of bsde}
      \|u\|_{\bH_{p,\infty}^{n}}+ \|u\|_{\bH_p^n} +\|v\|_{\bH_{p,2}^{n}}\leq
      c(p,T)[\|F\|_{\bH^{n}_{p}}+\|G\|_{L^{p}(\Omega,\sF_{T},H_{p}^{n})}
      ].
    \end{equation}

    (ii) For this solution, we have $u\in C([0,T],H_p^n)$ almost
    surely, and for any $ \phi \in H_{p/(p-1)}^{-n}$  the
following equality
\begin{eqnarray}\label{S bsde2}
    \begin{split}
    (u(\tau,\cdot),\phi)=(G,\phi)+
            \int_{\tau}^{T}(F(s,\cdot),\phi)ds
                    -\int_{\tau}^{T}(v^{l}(s,\cdot),\phi)dW^{l}_{s}
  \end{split}
\end{eqnarray}
holds for all $\tau\in[0, T]$ with probability 1.
\end{lem}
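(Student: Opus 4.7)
The plan is to reduce to $n=0$ via the isometry property of $(1-\Delta)^{n/2}$ from Remark~\ref{rmk setion 1}, which converts \eqref{bsde} into an equivalent equation with data and unknowns in the $L^p$-based spaces $\bH_p^0$, $\bH_{p,2}^0$, $\bH_{p,\infty}^0$, $L^p(\Omega,\sF_T,L^p)$. Assuming $n=0$, I would apply Lemma~\ref{lem finite approximation H_pn} to approximate $F$ by
$$F_j(t,x) = \sum_{i=1}^{j} \mathbb{I}_{(\tau_{i-1}^j,\tau_i^j]}(t)\, F_j^i(x), \quad F_j^i \in C_c^\infty,$$
and use the standard density of $C_c^\infty$-valued $\sF_T$-measurable simple functions to approximate $G$ in $L^p(\Omega,\sF_T,L^p)$. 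For each $j$ and each fixed $x$, set
$$u_j(t,x) := E\Big[G_j(x) + \int_t^T F_j(s,x)\,ds \,\Big|\, \sF_t\Big],$$
and let $v_j^k(t,x)$ be the integrand in the martingale representation of the closing martingale $M_j(t,x) := u_j(t,x) + \int_0^t F_j(s,x)\,ds$ against $W^k$. The elementary structure of $(F_j, G_j)$ makes joint measurability of $(u_j, v_j)$ in $(\omega,t,x)$ transparent, and $(u_j, v_j)$ satisfies \eqref{bsde} pointwise in $x$.

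Next, I would derive \eqref{Estimate of bsde} for $(u_j, v_j)$ by applying Doob's $L^p$-inequality to $M_j(\cdot, x)$ and the Burkholder-Davis-Gundy inequality to bound $E(\int_0^T |v_j(s,x)|^2 ds)^{p/2}$ by $C E\sup_t |M_j(t,x)|^p$, both pointwise in $x$; integration in $x$ via Fubini, together with $\|u_j\|_{\bH_p^0} \leq T^{1/p}\|u_j\|_{\bH_{p,\infty}^0}$, yields
$$\|u_j\|_{\bH_{p,\infty}^0} + \|u_j\|_{\bH_p^0} + \|v_j\|_{\bH_{p,2}^0} \leq c(p,T)\bigl(\|F_j\|_{\bH_p^0} + \|G_j\|_{L^p(\Omega,\sF_T,L^p)}\bigr).$$
Applied to differences $(F_j - F_k, G_j - G_k)$, this shows $\{(u_j, v_j)\}$ is Cauchy in the Banach space $(\bH_p^0 \cap \bH_{p,\infty}^0) \times \bH_{p,2}^0$; its limit $(u, v)$ inherits \eqref{Estimate of bsde}, and passing to the limit in \eqref{S bsde1} against fixed $\phi \in C_c^\infty$---using Remark~\ref{rmk of bsde} to control the $dW$-term---shows $(u, v)$ solves \eqref{bsde} in the sense of Definition~\ref{S bsde}. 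Uniqueness follows immediately by applying the estimate to the difference of two solutions, for which the data vanish.

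For part (ii), both sides of \eqref{S bsde1} are continuous in $\phi$ with respect to the $H_{p'}^{-n}$-norm---the $dW$-integral by Remark~\ref{rmk of bsde} and the remaining terms by H\"older's inequality---so density of $C_c^\infty$ in $H_{p'}^{-n}$ gives \eqref{S bsde2} for every such $\phi$. The path continuity $u \in C([0,T], H_p^n)$ follows from the approximations: each $u_j$, being a finite sum of conditional expectations of $C_c^\infty$-valued random variables, is manifestly continuous $[0,T] \to H_p^n$, and $\bH_{p,\infty}^n$ convergence along a subsequence yields uniform-in-$t$ convergence in $H_p^n$ almost surely, since $\sup_t \|\cdot\|_{H_p^n}^p \leq \int_{\bR^d} \sup_t |\cdot|^p dx$. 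The step I expect to be the main obstacle is the joint $(\omega,t,x)$-measurability of the $v_j$'s: the martingale representation theorem is inherently a one-parameter statement, so one must exploit the explicit step-function form of $(F_j, G_j)$---under which $M_j(t,x)$ reduces to a finite linear combination of conditional expectations with $C_c^\infty$ spatial profiles---to produce predictable integrands depending measurably on $x$, thereby justifying the pointwise-in-$x$ BDG estimate combined with Fubini.
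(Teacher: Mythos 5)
Your proposal is correct and follows the same overall architecture as the paper's proof: reduce to $n=0$ by the isometry $(1-\Delta)^{n/2}$, replace the data by a finite-rank approximation, solve and estimate the resulting scalar BSDEs pointwise in $x$ (Doob plus Burkholder--Davis--Gundy), integrate over $\bR^d$ with Fubini to get a $k$-independent bound, and pass to the limit in the weak formulation. The one genuine difference is the approximation device: the paper expands $F$ and $G$ along a Schauder basis $\{e_i\}$ of $L^p(\bR^d)$ and solves the BSDEs for the coordinate functionals $a_i(F)$, $a_i(G)$, citing Briand et al.\ for the scalar theory, whereas you invoke Lemma~\ref{lem finite approximation H_pn} to reduce to step functions in time with $C^{\infty}_{c}$ spatial profiles and then write the solution explicitly as a conditional expectation plus a martingale-representation integrand. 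Both yield a $v$-approximant of the form $\sum_i (\text{fixed } C^\infty_c \text{ or } L^p \text{ profile in } x)\times(\text{scalar predictable process in } (\omega,t))$, so the joint measurability issue you single out as the main obstacle is resolved in exactly the same way in both arguments; your version arguably makes this more transparent, at the cost of a separate density argument for $G$, while the paper's basis expansion treats $F$ and $G$ uniformly.

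One point to tighten: your uniqueness argument as stated is circular. The estimate \eqref{Estimate of bsde} is established only for the solution you construct as a limit of approximants, so you cannot apply it directly to the difference of two arbitrary solutions in the sense of Definition~\ref{S bsde}, which are merely elements of $\bH_p^n\times\bH_{p,2}^n$ satisfying \eqref{S bsde1}. The paper instead tests the homogeneous equation against each $\phi\in C^{\infty}_{c}$: by Remark~\ref{rmk of bsde} the process $\int_0^t(v,\phi)\,dW$ is a true martingale, so $(u(t,\cdot),\phi)=E[\int_t^T(v,\phi)\,dW\mid\sF_t]=0$ and hence $(v,\phi)=0$ a.e., and density of $C^{\infty}_{c}$ in $H_{p'}^{-n}$ (via Lemma~\ref{lem finite approximation H_pn}) gives $(u,v)=0$ in $\bH_p^n\times\bH_{p,2}^n$. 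This is a two-line fix, but it is needed.
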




\begin{proof}
First, we prove the uniqueness of the solution. Suppose that
$(u_1,v_1)$ and $(u_2,v_2)$ are two solutions of \eqref{bsde} in
$\bH_p^n\times \bH_{p,2}^n,$ and take $(u,v)=(u_1-u_2,v_1-v_2).$
Then, for any $\phi\in C^{\infty}_{c}$ and $t\in [0,T]$ We have
$$(u(t,\cdot),\phi)=\int_t^T (v(s,\cdot),\phi)\, dW_s,\quad ~ a.s..$$
Then by the theory on BSDEs (c.f.
\cite{Hu_2002,Karoui_Peng_Quenez,ParPeng_90}), we have
$$E\left[\int_{\tau_1}^{\tau_2}(u(t,\cdot),\phi)\, dt\right]=0\textrm{ and }
E\left[\int_{\tau_1}^{\tau_2}(v(s,\cdot),\phi)\, ds\right]=0, $$
 for any
stopping times $\tau_1\textit{ and }\tau_2,$ $0\leq \tau_1\leq \tau_2\leq
T.$ From Lemma \ref{lem finite approximation H_pn},
it follows that $(u,v)=0$ in
$\bH_p^n\times\bH_{p,2}^n.$ This verifies the uniqueness.

 For the other assertions, it is sufficient to prove the lemma for $n=0$.

Indeed, assume that the lemma is true for $n=k$ with $k\in \bR$ For
$\forall \delta\in \bR$, if $(F,G)\in \bH^{k+\delta}_{p}\times
L^{p}(\Omega,\sF_{T},H_{p}^{k+\delta})$ then $(F',G')\in
\bH^{k}_{p}\times L^{p}(\Omega,\sF_{T},H_{p}^{k})$ with
$$F'
:=(1-\Delta)^{\delta/2}F\quad \textrm{and}\quad
G':=(1-\Delta)^{\delta/2}G.$$ From the induction assumption, there
exists $(u',v')\in \bH_{p,\infty}^{k}\times \bH_{p,2}^{k}$,
satisfying the following
\begin{equation}\label{InductionEst}
      \|u'\|_{\bH_{p,\infty}^{k}}+ \|u'\|_{\bH_p^k} +\|v'\|_{\bH_{p,2}^{k}}\leq
      c(p,T)\left(\|F'\|_{\bH^{k}_{p}}+\|G'\|_{L^{p}(\Omega,\sF_{T},H_{p}^{k})}
\right),
    \end{equation}
such that for any $\phi\in H_{p/(p-1)}^{-k}$ the equality
$$(u'(\tau,\cdot),\phi)=(G',\phi)+
            \int_{\tau}^{T}(F'(s,\cdot),\phi)\, ds
                    -\int_{\tau}^{T}(v'^{l}(s,\cdot),\phi)\, dW^{l}_{s},$$
 holds for all $\tau\in[0,T]$ with probability 1.
 Take
 $$u=(1-\Delta)^{-\delta/2}u'\quad \textrm{and}\quad
 v^{l}=(1-\Delta)^{-\delta/2}v'^{l}.$$
 In view of Remark \ref{rmk setion 1}, $(u,v)\in
 \bH^{k+\delta}_{p}\times L^{p}(\Omega,\sF_{T},H_{p}^{k+\delta})$. Rewrite the last equality
 into the following
 \begin{eqnarray*}
  \begin{split}
((1-\Delta)^{\delta/2}u(\tau,\cdot),\phi)=&((1-\Delta)^{\delta/2}G,\phi)+
            \int_{\tau}^{T}((1-\Delta)^{\delta/2}F(s,\cdot),\phi)\, ds\\
 &-\int_{\tau}^{T}((1-\Delta)^{\delta/2}v^{l}(s,\cdot),\phi)\,
 dW^{l}_{s}, \quad \phi\in
H_{p/(p-1)}^{-k}
  \end{split}
\end{eqnarray*}
which is equivalent to
 \begin{eqnarray*}
  \begin{split}
(u(\tau,\cdot),(1-\Delta)^{\delta/2}\phi)=&(G,(1-\Delta)^{\delta/2}\phi)+
            \int_{\tau}^{T}(F(s,\cdot),(1-\Delta)^{\delta/2}\phi)\, ds\\
&-\int_{\tau}^{T}(v^{l}(s,\cdot),(1-\Delta)^{\delta/2}\phi)\,
dW^{l}_{s}, \quad \phi\in H_{p/(p-1)}^{-k}.
  \end{split}
\end{eqnarray*}
Hence, for any $\phi\in H_{p/(p-1)}^{-k-\delta}$ the equality
$$(u(\tau,\cdot),\phi)=(G,\phi)+
            \int_{\tau}^{T}(F(s,\cdot),\phi)\, ds
                    -\int_{\tau}^{T}(v^{l}(s,\cdot),\phi)\, dW^{l}_{s}$$
 holds for all $\tau\in[0,T]$ with probability 1. Then $(u,v)$ solves
 BSDE \eqref{bsde} for $n=k+\delta$ in the sense of Definition \ref{S bsde},
 and satisfies the inequality~(\ref{Estimate of bsde}) with $n:=k+\delta$ which is
  exactly the inequality~(\ref{InductionEst}).

In what follows, we shall use the method of finite-dimensional
approximation.

Since $\bH_{p,\infty}^n \subset \bH_p^n$ and $\|u\|_{\bH_p^n}\leq C(T,p)
\|u\|_{\bH_{p,\infty}^n}$ for $u\in \bH_{p,\infty}^n,$  it remains to prove
the existence of the solution $(u,v)$ in $
\bH_{p,\infty}^n\times\bH_{p,2}^n,$ the assertion (ii) and the following
estimate
    \begin{equation}\label{Estimate of bsde 2}
      \|u\|_{\bH_{p,\infty}^{n}} +\|v\|_{\bH_{p,2}^{n}}\leq
      c(p,T)\left(\|F\|_{\bH^{n}_{p}}+\|G\|_{L^{p}(\Omega,\sF_{T},H_{p}^{n})}
      \right).
    \end{equation}

It is known (see~\cite{HandbookofBanach}) that the Banach space
$L^{p}(\bR^{d})$ has a Schauder basis for $p\in (1,\infty)$. Let
$\{e_{i}:i=1,2,3,\dots\}$ be a Schauder basis of $L^{p}(\bR^{d}).$
Then there exists an $M\in (0,\infty)$ and a unique sequence bounded
linear functional $a_i$ such that for any $h\in L^{p}(\bR^{d}),$ we
have
$$\sup_{j\geq 1}\left\|\sum_{i=1}^{j}a_i(h)e_i\right\|_{p}\leq M
\|h\|_p \quad \textrm{and}\quad \lim_{j \to
\infty}\left\|h-\sum_{i=1}^{j}a_i(h)e_i\right\|_p=0.$$ In
particular, for convenient discussion, we consider $e_{i}(x)$ to be
finite for every $x\in\bR^d$ and $i=1,2,3,\dots.$

 By \cite{Hu_2002}, there exist  uniquely
$U_{k}:=(U_{k1},\dots,U_{kk})^{\mathcal {T}}\in \cS^p(\bR^k)$ and a
$\sP$-measurable process $V^{l}:=(V_{ki}^l)\in
L^{p}(\Omega,L^2([0,T],\bR^{k\bigotimes m}))$ which solve the scalar valued
BSDE
\begin{equation}\label{finite dim 1}
  \left\{\begin{array}{l}
    \begin{split}
    -dU_{ki}=F_{ki}dt-V_{ki}^{l}dW^{l}_t,
    ~~t\in [0,T],
    \end{split}\\
    \begin{split}
      U_{ki}(T)=G_{ki},
    \end{split}
  \end{array}\right.
\end{equation}
where $G_{ki}=a_i(G)$ and $F_{ki}=a_i(F(t,\cdot)),$ with $i=1,2,\dots,k.$
Denote $\vec{G}_k:=(G_{k1},\dots,G_{kk})^{\mathcal {T}}$ and
$\vec{F}_k:=(F_{k1},\dots,F_{kk})^{\mathcal {T}}.$
 We have
\begin{eqnarray}\label{proof of bsde 1}
    \begin{split}
    &E\left[\sup_{t\leq T} |U_{k}(t)|^{p}\right]
    +E\left[\left(\int_{0}^{T}|V_{k}(t)|^{2}dt\right)^{p/2}\right]\\
    \leq &c(T,k,p)
    \left\{E\left[|\vec{G}_{k}|^{p}\right]
    +E\left[\int_{0}^{T}|\vec{F}_{k}(t)|^{p}dt        \right]\right\}
    \end{split}
\end{eqnarray}
and with probability 1
\begin{eqnarray}\label{proof of bsde 11}
    U_k(t)=\vec{G}_k+\int_t^T \vec{F}_k(s)ds-\int_t^TV_k^l(s)dW^l_s,~t\in
    [0,T].
\end{eqnarray}
 Define
\begin{eqnarray}\label{A}
    u_{k}:=\sum_{i=1}^{k}U_{ki}e_{i}, \quad
    v_{k}:=\sum_{i=1}^{k}V_{ki}e_{i},\quad  G_k:=\sum_{i=1}^{k}G_{ki}e_{i},\quad \textrm{ and } F_k:=
\sum_{i=1}^{k}F_{ki}e_{i}.
\end{eqnarray} It is obvious that $u_k,v_k,$ and $F_k$ are all $\sP\times\cB (\bR^d)$-measurable
processes. In view of \eqref{proof of bsde 1}, we can check that
 the pair $(u_{k},v_{k})$ solves the Banach space-valued BSDE \eqref{bsde}
  with $(F,G):=(F_{k},G_{k})$ in the sense of Definition \ref{S bsde}.
Moreover, for any $x\in\bR^d,$ the pair $(u_k(\cdot,x),v(\cdot,x))$ solves
the scalar valued BSDE
\begin{equation}\label{1003_1}
  \left\{\begin{array}{l}
    \begin{split}
    -du_{k}(t,x)=F_{k}(t,x)dt-v^l_{k}(t,x)dW^{l}_t,
    ~~t\in [0,T],
    \end{split}\\
    \begin{split}
      u_{k}(T,x)=G_{k}(x),
    \end{split}
  \end{array}\right.
\end{equation}
and satisfies the following estimate
\begin{eqnarray}\label{1003_2}
    \begin{split}
    &E\big[\sup_{t\leq T} |u_{k}(t,x)|^{p}\big]
    +E\left[\int_{0}^{T}|v_{k}(t,x)|^{2}dt\right]^{\frac{p}{2}}\\
    \leq\,& C  \bigg\{E\big[|G_{k}(x)|^{p}\big]
    +E\left[\int_{0}^{T}|F_{k}(t,x)|^{p}dt        \right]\bigg\}
    \end{split}
\end{eqnarray}
where $C=C(T,p) $ does not depend on $k$ since the constant in the
BDG inequality is universal and does not depend on the dimension of
the range space of the underlying local martingale. Integrating both
sides of the last inequality on $\bR^{d}$ and then applying the
Fubini theorem, we get the pair
$(u_{k},v_{k})\in\bH^{0}_{p,\infty}\times \bH^{0}_{p,2}$ satisfies
the following inequality
\begin{eqnarray}\label{proof of bsde 2}
      \|u_{k}\|_{\bH_{p,\infty}^{0}}+\|v_{k}\|_{\bH_{p,2}^{0}}\leq
      c(p,T)\{\|F_{k}\|_{\bH^{0}_{p}}+\|G_{k}\|_{L^{p}(\Omega,\sF_{T},H_{p}^{0})}     \}.
\end{eqnarray}

On the other hand, as $\|F_k(\omega,t)-F(\omega,t)\|_p\rightarrow 0$ and
$\|F_k(\omega,t)-F(\omega,t)\|\leq(M+1)\|F(\omega,t)\|_p$ for
$(\omega,t)\in \Omega\times [0,T],a.e.,$ by using the dominated convergence
theorem we have $F_{k}\rightarrow F$ strongly in $\bH_p^0 $ as $k
\rightarrow\infty.$ Similarly, $G_{k}\rightarrow G$ strongly in
$L^{p}(\Omega,\sF_{T},H_{p}^{0})$ as $k \rightarrow\infty.$
Hence, there exists
  $(u,v)\in\bH_{p,\infty}^{0}\times \bH_{p,2}^{0}$ such
that it is the strong limit of the sequence $\{(u_{k},v_{k})\}$ in
 $\bH_{p,\infty}^{0} \times \bH_{p,2}^{0}\textrm{ as } k
\rightarrow\infty,$ and satisfies the estimate \eqref{Estimate of bsde 2}.

Furthermore, in view of \eqref{proof of bsde 11} and \eqref{A}, we conclude
that, for any $\phi\in L^{p/(p-1)}(\bR^d)$ the equality
\begin{eqnarray}\label{proof of bsde 3}
    (u_{k}(\tau,\cdot),\phi)=(G_{k},\phi)+
            \int_{\tau}^{T}(F_{k}(s,\cdot),\phi)ds
                    -\int_{\tau}^{T}(v_{k}^{l}(s,\cdot),\phi)dW^{l}_{s}
\end{eqnarray}
holds for all $\tau\in [0,T)$  with probability 1. Since
\begin{eqnarray*}
    \begin{split}
    &E\left[\int_0^T|\int_{\tau}^{T}(v_{k}^{l}-v^l(s,\cdot),\phi)dW^{l}_{s}|d\tau
    \right] \\
        \leq& C
            E\left[\int_0^T \left(\int_{\tau}^T|(v_{k}-v(s,\cdot),\phi)|^2
            ds\right)^{1/2} d\tau \right]              \\
        \leq& C(T)
            E\left[\left(\int_{0}^T\left(\int_{\bR^d}
            |(v_k-v)(s,x)\phi(x)|dx\right)^2ds\right)^{1/2}\right]   \\
        & \textrm{(using Minkowski inequality)}\\
        \leq& C(T)
            E\left[\int_{\bR^d}\left(\int_{0}^T\
            |(v_k-v)(s,x)\phi(x)|^2ds\right)^{1/2}dx\right]   \\
        \leq& C(T)
         \|v_k-v\|_{\bH_{p,2}^0}\|\phi\|_{p/(p-1)} \rightarrow 0
        \textrm{ as }k\rightarrow \infty,           \\
    \end{split}
\end{eqnarray*}
\begin{eqnarray*}
    \begin{split}
    E\Big[\int_0^T |(u_{k}(\tau,\cdot)-u(\tau,\cdot),\phi)| d\tau\Big]&=
        E\left[\int_{0}^T\int_{\bR^d}|(u_k-u)(s,x)
        \phi(x)|dxds\right]   \\
        &\leq T \|u_k-u\|_{\bH_{p,\infty}^0}\|\phi\|_{p/(p-1)} \rightarrow 0
        \textrm{ as }k\rightarrow \infty,
    \end{split}
\end{eqnarray*}
and
$$E\Big[\int_0^T|(G_k-G,\phi)|ds
\Big]+E\Big[\int_0^T|\int_{\tau}^T((F_k-F)(s,\cdot),\phi)ds|d\tau \Big]
\rightarrow 0 \textrm{ as }k\rightarrow \infty.$$
 Taking limits in $L^1(\Omega\times
[0,T],\sF_T\times \cB(\bR^d)),$  on both sides of the equation \eqref{proof
of bsde 3} we conclude \eqref{S bsde2} almost everywhere in $[0,T]\times
\Omega$.

Since, for any $\phi\in L^{p/(p-1)}(\bR^d),$ Equation \eqref{proof
of bsde 3} holds for all $\tau\leq T$ with probability 1, the
process $\{(u_k(t,\cdot),\phi), t\in [0,T]\}$ is continuous
    $(a.s).$ As
$$E \sup_{0\leq t \leq T}\|u-u_k\|_{L^p(\bR^d)} \leq
        \|u-u_k\|_{\bH_{p,\infty}^{0}}
         \rightarrow 0,
    $$ the process $\{(u(t,\cdot),\phi), t\in [0,T]\}$ is continuous.
This implies that, for any $\phi\in L^{p/(p-1)}(\bR^d),$ equality \eqref{S
bsde2} holds not only in $[0,T]\times \Omega$ almost everywhere but also
 for all $\tau\leq T$ almost surely.

Besides, since $u_k\in C([0,T],L^p(\bR^d))(a.s.)$ and $E \sup_{0\leq t \leq
T}\|u-u_k\|_{L^p(\bR^d)}
         \rightarrow 0~as~k\rightarrow \infty,$ we have $u\in C([0,T],L^p(\bR^d))(a.s.).$
%
We complete the proof of the
lemma.
\end{proof}
%
\begin{rmk}
In view of Lemma \ref{lem finite approximation H_pn}, we can
approximate in $\bH_p^0\times L^{p}(\Omega,\sF_{T},H_{p}^{0})$ for
$p\in (1,2]$ during the proof $(F,G)$ by a sequence $(F_k,G_k)$
belonging to $\bH_2^0\times L^{2}(\Omega,\sF_{T},H_{2}^{0})$.
Moreover, we can assume that $(F_k,G_k)(\omega,t)$ is uniformly
compactly supported in $\bR^d$ for $(\omega,t)\in\Omega\times [0,T]$
$a.e.$. After finite-dimension approximation of $(F_k,G_k)$ in
$\bH_2^0\times L^{2}(\Omega,\sF_{T},H_{2}^{0})$ where a Hilbert
basis is a Shauder basis, the rest of our proof goes in a standard
way (c.f. \cite{Hu_2002}) for $p\in (1,2]$, while not for $p\in
(2,\infty).$
\end{rmk}

\begin{lem}\label{BSDE lem2}
    Let $(u,v)\in\bH_{p,\infty}^{n}\times \bH_{p,2}^{n}$ be a solution
    of \eqref{bsde} for given
    $F\in\bH^{n}_{p}$ and $G=0$. Then for any $\eps >0,$
    there exists a positive constant $c=c(p,T,\eps)<\infty$ such that
\begin{eqnarray}\label{BSDE lem2_0}
  \begin{split}
    \|v\|_{\bH_{p,2}^{n}(t)}\leq
            c(p,T,\eps)\|u\|_{\bH_{p}^{n}(t)}+
                \eps \|F\|_{\bH^{n}_{p}(t)},~t\in [0,T].
  \end{split}
\end{eqnarray}
\end{lem}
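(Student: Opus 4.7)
First I would reduce to the case $n=0$. By Remark \ref{rmk setion 1} the operator $(1-\Delta)^{n/2}$ maps $\bH_p^n,\bH_{p,2}^n,\bH_{p,\infty}^n$ isometrically onto $\bH_p^0,\bH_{p,2}^0,\bH_{p,\infty}^0$, and it commutes with equation \eqref{bsde}: applying it term-by-term produces a BSDE with $G=0$ for $((1-\Delta)^{n/2}u,(1-\Delta)^{n/2}v,(1-\Delta)^{n/2}F)$ in the $n=0$ class, so it suffices to establish the inequality when $n=0$.

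For $n=0$, the key identity comes from It\^o's formula applied pointwise in $x$ to $|u(s,x)|^2$. Using $-du=F\,ds-v^k\,dW^k$ and $u(T,x)=0$, one obtains for a.e.\ $(\omega,x)$,
\[
\int_t^T |v(s,x)|^2\,ds \;=\; -|u(t,x)|^2 + 2\int_t^T u(s,x)F(s,x)\,ds - 2\int_t^T u(s,x)v^k(s,x)\,dW_s^k .
\]
When $p=2$ the estimate closes at once: integrate in $x$, take expectation (the martingale vanishes by the $\bH_{2,2}^0$-integrability of $uv$), and apply Cauchy--Schwarz and Young to $2E\int_{\bR^d}\!\int_t^T uF\,ds\,dx\leq 2\|u\|_{\bH_2^0(t)}\|F\|_{\bH_2^0(t)}\leq c_\eps\|u\|_{\bH_2^0(t)}^2+\eps^2\|F\|_{\bH_2^0(t)}^2$.

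For general $p\in(1,\infty)$ I would raise the It\^o identity to the $p/2$-th power using $(a+b)^{p/2}\leq C_p(a^{p/2}+b^{p/2})$, integrate in $x$ and take expectation. The cross term $\bigl(\int|uF|\,ds\bigr)^{p/2}$ is controlled by Cauchy--Schwarz in $s$ followed by Young's inequality with a free parameter to deliver an $\eps$-small multiple of a norm of $F$ and a $c_\eps$-large multiple of a norm of $u$, with a H\"older-in-time passage between $\bH_{p,2}^0$ and $\bH_p^0$ producing constants depending only on $p,T$. The martingale term $|\int uv\,dW|^{p/2}$ is bounded by BDG pointwise in $x$ by a multiple of $(\int u^2 v^2\,ds)^{p/4}$, after which a further Young's inequality permits absorbing a small multiple of $\|v\|_{\bH_{p,2}^0(t)}^p$ back into the left-hand side. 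Tuning the Young parameters yields the split with arbitrarily small coefficient in front of $\|F\|$.

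The main obstacle is the martingale term. The naive bound $\int u^2 v^2\leq \sup_s|u(s,x)|^2\int|v|^2$ would place $\|u\|_{\bH_{p,\infty}^n}$ on the right, which is weaker than the claimed $\|u\|_{\bH_p^n}$. To recover the $\bH_p^n$-norm I expect one instead applies It\^o's formula to $\|u(s,\cdot)\|_{L^p(\bR^d)}^p$ (for $p\geq 2$; for $p\in(1,2)$ one first regularises via $(|u|^2+\eta)^{p/2}$ and passes $\eta\to 0$). Taking expectation, the martingale part drops out by Lemma \ref{lem BSDE}(i), and Young's inequality applied to $|u|^{p-1}|F|$ gives the a priori bound
\[
E\int_t^T\!\int_{\bR^d}|u|^{p-2}|v|^2\,dx\,ds \;\leq\; c_\eps\|u\|_{\bH_p^n(t)}^p+\eps\|F\|_{\bH_p^n(t)}^p .
\]
The remaining delicate step is to convert this weighted estimate into a bound on the mixed-norm $\|v\|_{\bH_{p,2}^n(t)}^p$, which I expect is achieved by a weighted H\"older inequality that uses the $\bH_{p,\infty}^n$-regularity of $u$ (guaranteed by Lemma \ref{lem BSDE}(i)) as the weight needed to remove the $|u|^{p-2}$ factor.
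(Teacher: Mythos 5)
Your first half matches the paper's argument: reduce to $n=0$, apply It\^o to $|u(\cdot,x)|^2$ pointwise in $x$ (the paper does this for the finite-dimensional approximations $(u_k,v_k)$ of Lemma \ref{lem BSDE}, since only those are known to solve a genuine scalar BSDE for each $x$ --- a technical point you skip), raise to the power $p/2$, use BDG on the stochastic integral and Young's inequality to absorb $\|v\|$. This correctly yields
$E\bigl[\bigl(\int_t^T|v|^2\,ds\bigr)^{p/2}\bigr]\leq c\,E\bigl[\sup_{s}|u(s,x)|^p\bigr]+\eps_1 E\bigl[\int_t^T|F|^p\,ds\bigr]$,
and you correctly identify the crux: the $\sup$ in time must be traded for the time-integrated $\bH_p^n$-norm of $u$.

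It is at this point that your proposal has a genuine gap. The paper's resolution is to apply the $|u|^p$-It\^o formula (Corollary 2.3 of Briand et al.) \emph{pointwise in $x$} and, after a BDG--Young absorption of the quadratic-variation term, obtain
$E\bigl[\sup_{s\in[t,T]}|u(s,x)|^p\bigr]\leq c\,E\bigl[\int_t^T|u(s,x)|^{p-1}|F(s,x)|\,ds\bigr]\leq c(p,\eps_2)E\bigl[\int_t^T|u|^p\,ds\bigr]+\eps_2E\bigl[\int_t^T|F|^p\,ds\bigr]$,
which is substituted directly into the display above; integration over $\bR^d$ then finishes the proof. You write down the same It\^o formula but keep only the weighted by-product $E\int\int|u|^{p-2}|v|^2$ and then propose to ``de-weight'' it by a H\"older inequality against the $\bH_{p,\infty}^n$-norm of $u$. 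That step is not carried out, and as conjectured it does not close: (i) for $p\geq 2$ the pointwise bound $\int|v|^2\,ds\leq(\sup_s|u|)^{2-p}\int|u|^{p-2}|v|^2\,ds$ goes the wrong way, so the route fails outright there, while the lemma is asserted for all $p\in(1,\infty)$; (ii) even for $p\in(1,2)$ the H\"older step reintroduces $\|u\|_{\bH_{p,\infty}^n(t)}$ --- exactly the quantity you were trying to avoid --- and one must then invoke $\|u\|_{\bH_{p,\infty}^n(t)}\leq c(p,T)\|F\|_{\bH_p^n(t)}$ (Lemma \ref{lem BSDE}(i) with $G=0$) together with a carefully tuned Young splitting to verify that the resulting coefficient of $\|F\|$ can be made smaller than $\eps$; none of this appears in your write-up. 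The fix is simple and already latent in your own computation: extract from the $|u|^p$-It\^o formula the estimate on $E[\sup_s|u(s,x)|^p]$ rather than on the weighted integral, and feed it back into your BDG bound.
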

\begin{rmk}
  Lemma \ref{BSDE lem2} yields that
   for any $A\in \sP \times \cB (\bR^d),$  there holds
\begin{eqnarray*}
  \begin{split}
   \|v\mathbb{I}_{A}\|_{\bH_{p,2}^{n}(t)}\leq
            c(p,T,\eps)\|u\mathbb{I}_{A}\|_{\bH_{p}^{n}(t)}+
                \eps \|F\mathbb{I}_{A}\|_{\bH^{n}_{p}(t)}.
  \end{split}
\end{eqnarray*}
    In particular, if $A:=\{(t,\omega,x)\in [0,T]\times \Omega
\times \bR^d:\, u(t,\omega,x)=0 \},$ and as $\eps$ is arbitrary, then we
get $\|v\mathbb{I}_{A}\|_{\bH_{p,2}^{n}(t,T)}=0$ which implies
$$v\mathbb{I}_{\{u=0 \}}= 0\textrm{ for }(\omega,t,x)\in \Omega\times [0,T]\times \bR^d,a.e..$$
\end{rmk}

\begin{proof}[Proof of Lemma \ref{BSDE lem2}]
    First consider the case of $n=0$.
    Without loss of generality, we assume that the Brownian motion
     is one-dimensional.

Consider the approximation sequence $\{(u_k,v_k)\}$ defined in the proof of
Lemma \ref{lem BSDE}. For any fixed $x\in \bR^d$ the pair $(u_k,v_k)$
solves the following scalar valued BSDE
\begin{equation*}
  \left\{\begin{array}{l}
    \begin{split}
    -du_{k}(t,x)=F_{k}(t,x)\, dt-v_{k}(t,x)\, dW_t,
    ~~t\in [0,T],
    \end{split}\\
    \begin{split}
      u_{k}(T,x)=0,
    \end{split}
  \end{array}\right.
\end{equation*}
and satisfies the following inequality (see \eqref{1003_1} and
\eqref{1003_2}).
\begin{eqnarray*}
    \begin{split}
    E\left[\sup_{t\leq T} |u_{k}(t,x)|^{p}\right]
    +E\left[\int_{0}^{T}|v_{k}(t,x)|^{2}\, dt\right]^{\frac{p}{2}}
    \leq C(p,T)E\left[\int_{0}^{T}|F_{k}(t,x)|^{p}\, dt        \right].
    \end{split}
\end{eqnarray*}

 For each integer $l\geq 1,$ define the stopping time
$$  \tau _{l}:=\textrm{inf}\{
    t\in [0,T],\int_0^t|v(r,x)|^2dr \geq l\}\wedge T.
  $$
Using It\^o's formula, we have
\begin{eqnarray*}
  \begin{split}
|u_k(\eta,x)|^2+\int_{\eta}^{\tau_{l}} |v_k(r,x)|^2dr
    =&|u_k(\tau_l,x)|^2+2\int_{\eta}^{\tau_l}u_k(r,x)F_k(r,x)dr     \\
      -2\int_{\eta}^{\tau_l}u_k(r,x)v_k(r,x)dW_r,a.s.
  \end{split}
\end{eqnarray*}
for any stopping time $\eta\leq \tau_l$. Therefore,
\begin{eqnarray*}
  \begin{split}
&\left(\int_{\eta}^{\tau_{l}} |v_k(r,x)|^2dr\right)^{p/2}       \\
 \leq&
    c(p)\left(\sup_{t\in [\eta,T]}|u_k(t,x)|^{p}
        + \left[\int_{\eta}^{T}|u_k(r,x)F_k(r,x)|dr\right]^{p/2}
        +\left|\int_{\eta}^{\tau_l}u_k(r,x)v_k(r,x)dW_r\right|^{p/2}
        \right).
  \end{split}
\end{eqnarray*}
Noting by the BDG inequality that
\begin{eqnarray*}
  \begin{split}
    E\left[\left|\int_{\eta}^{\tau_l}u_k(r,x)v_k(r,x)dW_r\right|^{p/2}\right]
    &\leq    c_1(p)E\left[\left(
                    \int_{\eta}^{\tau_l}|u_k(r,x)v_k(r,x)|^2dr
                                                \right)^{p/4}\right]  \\
    &\leq    c_1(p)E\left[ \sup_{t\in [\eta,T]}|u_k(t,x)|^{p/2}
                        \left(\int_{\eta}^T|v_k(r,x)|^2dr\right)^{p/4}
                  \right],
  \end{split}
\end{eqnarray*}
we have
\begin{eqnarray*}
  \begin{split}
    &E\left[\left(\int_{\eta}^{\tau_{l}} |v_k(r,x)|^2dr\right)^{p/2}\right]     \\
     \leq& c(p)E\left[\sup_{t\in [\eta,T]} |u_k(t,x)|^p+
      \left(\int_{\eta}^{T}|u_k(r,x)F_k(r,x)|dr\right)^{p/2}
        +|\int_{\eta}^{\tau_l}u_k(r,x)v_k(r,x)dW_r|^{p/2} \right]             \\
     \leq& c(p) E\left[\sup_{t\in [\eta,T]} |u_k(t,x)|^p+
      \left(\int_{\eta}^{T}|u_k(r,x)F_k(r,x)|dr\right)^{p/2}
                \right]                                                        \\
       &   ~~~~~~~~~      +c_1(p)E\left[ \sup_{t\in [\eta,T]}|u_k(t,x)|^{p/2}
                        \left(\int_{\eta}^T|v_k(r,x)|^2dr\right)^{p/4}
                  \right]                                                      \\
     \leq& c(p) E\left[ \sup_{t\in [\eta,T]} |u_k(t,x)|^p
    +\left(\int_{\eta}^{T}|u_k(r,x)F_k(r,x)|dr\right)^{p/2}\right]\\
                &+\frac{1}{2} E\left[\left(\int_{\eta}^T|v_k(r,x)|^2dr\right)^{p/2}
                \right],
  \end{split}
\end{eqnarray*}
and, for each $l\geq 1$ and $\forall \eps_1 >0,$ there is
$c=c(p,\eps_1,T)>0$ such that
\begin{eqnarray*}
  \begin{split}
    &E\left[\left(\int_{\eta}^{\tau_{l}} |v_k(r,x)|^2dr\right)^{p/2}\right]     \\
     &\leq c(p) E\left[ \sup_{t\in [\eta,T]} |u_k(t,x)|^p
                +\left(\int_{\eta}^{T}|u_k(r,x)F_k(r,x)|dr\right)^{p/2}\right]  \\
     &\leq c(p) E\left[\sup_{t\in [\eta,T]} |u_k(t,x)|^p
                +\sup_{t\in [\eta,T]} |u_k(t,x)|^{p/2}
                \left(\int_{\eta}^{T}|F_k(r,x)|dr\right)^{p/2}\right]           \\
     &\leq c(p,\eps_1,T) E\big[\sup_{t\in [\eta,T]} |u_k(t,x)|^p\big]
                    +\eps_1 E\left[\int_{\eta}^{T}|F_k(r,x)|^{p}dr
                    \right].
  \end{split}
\end{eqnarray*}
So, letting $l\rightarrow \infty$ and using Fatou's lemma, we have
\begin{eqnarray}\label{BSDE lem2_1}
    \begin{split}
    &~~~~~ E\left[\left(\int_{\eta}^{T} |v_k(r,x)|^2dr\right)^{p/2}\right]           \\
      &\leq c(p,\eps_1,T) E\big[\sup_{t\in [\eta,T]} |u_k(t,x)|^p\big]
                    +\eps_1 E\left[\int_{\eta}^{T}|F_k(r,x)|^{p}dr
                    \right]
    \end{split}
\end{eqnarray}
for any stopping time $\eta\in[0,T],$ and in particular for any
deterministic $\eta \in [0,T].$

On the other hand, using Corollary 2.3 of Briand et al. \cite{Hu_2002},
 we have almost surely
\begin{eqnarray}\label{BSDE lem2_2}
  \begin{split}
    &~~~~~~|u_k(t,x)|^p+c_0(p)\int_t^T
            |u_k(s,x)|^{p-2}\mathbb{I}_{\{u_k(s,x)\neq 0\}} |v_k(s,x)|^2\, ds   \\
    &\leq \int_t^T|u_k(s,x)|^{p-1}|F_k(s,x)|\, ds
            -p\int_t^T|u_k(s,x)|^{p-2}u_k(s,x)v_k(s,x)\, dW_s,~t\in [0,T]
  \end{split}
\end{eqnarray}
where $c_0(p)=p[(p-1)\wedge 1]/2.$

As $(u_k,v_k)\in\bH_{p,\infty}^{0}\times \bH_{p,2}^{0},$ from the preceding
inequality, we have almost surely
$$\int_t^T |u_k(s,x)|^{p-2}\mathbb{I}_{\{u_k(s,x)\neq 0\}} |v_k(s,x)|^2ds < \infty,
~ t\in [0,T], $$ and further,
\begin{eqnarray}\label{BSDE lem2_3}
\begin{split}
    &c_0(p)E\left[
    \int_t^T |u_k(s,x)|^{p-2}\mathbb{I}_{\{u_k(s,x)\neq 0\}} |v_k(s,x)|^2ds
        \right]        \\
     \leq& E\left[ \int_t^T|u_k(s,x)|^{p-1}|F_k(s,x)|ds  \right]
     ,~~~~~~~t\in [0,T].
\end{split}
\end{eqnarray}
From \eqref{BSDE lem2_2} and \eqref{BSDE lem2_3}, using the BDG inequality
we have
\begin{eqnarray*}
  \begin{split}
    &~~~~E\big[ \sup_{s\in [t,T]} |u_k(s,x)|^p \big]         \\
     & \leq
     E\left[ \int_t^T|u_k(s,x)|^{p-1}|F_k(s,x)|ds  \right]
        +E\left[ |\int_t^T|u_k(s,x)|^{p-2}u_k(s,x)v_k(s,x)dW_s| \right]\\
     &~~~~~~
        +E\left[ \sup_{r\in [t,T]}
        |\int_t^r|u_k(s,x)|^{p-2}u_k(s,x)v_k(s,x)dW_s|    \right]      \\
     & \leq
        E\left[ \int_t^T|u_k(s,x)|^{p-1}|F_k(s,x)|ds  \right]
          +c(p)E\left[ \left(\int_t^T
          (|u_k(s,x)|^{p-1}|v_k(s,x)|)^2ds\right)^{1/2}
            \right]                                             \\
     & \leq
        E\left[ \int_t^T|u_k(s,x)|^{p-1}|F_k(s,x)|ds  \right]   \\
     &~~~~~~ +c(p)E\left[\sup_{s\in [t,T]}|u_k(s,x)|^{p/2}
          \left(
          \int_t^T|u_k(s,x)|^{p-2}\mathbb{I}_{\{u_k(s,x)\neq 0\}} |v_k(s,x)|^2ds
          \right)^{1/2}\right]                                   \\
     & \leq
        E\left[ \int_t^T|u_k(s,x)|^{p-1}|F_k(s,x)|ds  \right]
        +\frac{1}{2}E\big[ \sup_{s\in [t,T]} |u_k(s,x)|^p \big] \\
     &~~~~~+\frac{c(p)^2}{2}E\left[
        \int_t^T|u_k(s,x)|^{p-2}\mathbb{I}_{\{u_k(s,x)\neq 0\}} |v_k(s,x)|^2ds
        \right]                                                  \\
     &\leq
        c'(p)E\left[ \int_t^T|u_k(s,x)|^{p-1}|F_k(s,x)|ds  \right]
        +\frac{1}{2}E\big[ \sup_{s\in [t,T]} |u_k(s,x)|^p \big].
  \end{split}
\end{eqnarray*}
Thus, for any $\eps_2>0,$ we have
\begin{eqnarray}
  \begin{split}
    &E\big[ \sup_{s\in [t,T]} |u_k(s,x)|^p \big] \leq
        2c'(p)E\left[ \int_t^T|u_k(s,x)|^{p-1}|F_k(s,x)|ds  \right]  \\
    &~~~~~~~~~~\leq
        c(p,\eps_2)E\left[\int_t^T|u_k(s,x)|^pds\right]+
                \eps_2 E\left[\int_t^T |F_k(s,x)|^p ds\right].
  \end{split}
\end{eqnarray}
Combining the lat inequality with \eqref{BSDE lem2_1}, and letting $\eps_1$
and $\eps_2$ be small enough such that $\eps_2  c(p,\eps_1,T)
+\eps_1<\eps$, we get
\begin{eqnarray}
  \begin{split}
     E\left[\left(\int_{\eta}^{T} |v_k(r,x)|^2dr\right)^{p/2}\right]
         \leq   C E\left[\int_t^T|u_k(s,x)|^pds\right]+
                \eps E\left[\int_t^T |F_k(s,x)|^p ds\right].
  \end{split}
\end{eqnarray}
Here the constant $C=C(p,T,\eps)$ is independent of $k.$

Now, integrating on $\bR^d$ both sides of the preceding inequality
and letting $k\rightarrow \infty $ , we get \eqref{BSDE lem2_0}
for $n=0.$ The general case can be proved by induction. The proof is
complete.
\end{proof}

\begin{rmk}
  The arguments in the proof of Lemma~\ref{BSDE lem2} are more or less standard (see pages 115--118 of \cite{Hu_2002}).
\end{rmk}

\section {A stochastic Banach Space}
In this section we shall define a stochastic Banach space which will play a
crucial role in $L^p$ theory of BSPDEs.

\begin{defn}\label{stochastic space}
  For $n\in\bR,$ $p\in(1,\infty)$ and a $\sD'$-valued function
  $u\in\bH_{p}^{n},$
  we say $u\in\mathscr{H}_{p}^{n}$ if
    $u_{xx}\in\bH_{p}^{n-2},u(T,\cdot)\in L^{p}(\Omega,\sF_{T},B_{p,p}^{n-2/p}),$
     and there exists
  $(F,v)\in \bH_{p}^{n-2}\times \bH_{p}^{n-1}$
   such that, $\forall \phi \in C^{\infty}_{c},$ the following equality
  \begin{eqnarray}\label{stochastic dfn 1}
    (u(t,\cdot),\phi)
            =(u(T,\cdot),\phi)+\int_{t}^{T}(F(s,\cdot),\phi)ds
                   -\sum_{r=1}^{m}\int_{t}^{T}(v^{k}(s,\cdot),\phi)dW^{k}_{s},
  \end{eqnarray}
  holds for all $t\leq T$ with probability 1.

  Define
   $\sH_{p,0}^{n}:=\sH_{p}^{n}\cap \{u:u(T,\cdot)=0\},$
   and for $u\in\mathscr{H}_{p}^{n}$
    \begin{eqnarray}\label{stochastic norm1}
        \|u\|_{\sH_{p}^{n}}:=  \|u_{xx}\|_{\bH_{p}^{n-2}}
                +\|F\|_{\bH_{p}^{n-2}}+    \|v_x\|_{\bH_{p}^{n-2}}
                         +\left(E\|u(T,\cdot)\|_{B_{p,p}^{n-2/p}}^{p}\right)^{\frac{1}{p}}.
    \end{eqnarray}
\end{defn}

\begin{rmk}\label{rmk 2}
 Note that $L^{p}(\Omega,\sF_{T},B_{p,p}^{n-2/p}) $ is
  continuously embedded into
$   L^{p}(\Omega,\sF_{T},H_{p}^{n-2}).$ If $u\in\mathscr{H}_{p}^{n},$ it
follows from Lemma \ref{lem BSDE} that $u\in \bH_{p,\infty}^{n-2} ,v\in
\bH_{p,2}^{n-2},$ and
\begin{equation*}
    \begin{split}
    \left(E\sup_{t\leq T}\|u(t,\cdot)\|^p_{H_{p}^{n-2}}\right)^{1/p}+
            \|v\|_{\bH_{p,2}^{n-2}}  &\leq
      \|u\|_{\bH_{p,\infty}^{n-2}}+\|v\|_{\bH_{p,2}^{n-2}}\\
      &\leq  c(p,T)\left(\|F\|_{\bH^{n-2}_{p}}
                    +\|u(T,\cdot)\|_{L^{p}(\Omega,\sF_{T},H_{p}^{n-2})} \right)\\
        &\leq c(p,T) \left(\|F\|_{\bH^{n-2}_{p}}
                    +\|u(T,\cdot)\|_{L^{p}(\Omega,\sF_{T},B_{p,p}^{n-2/p})}  \right).
    \end{split}
\end{equation*}
\end{rmk}

\begin{rmk}\label{rmk 1}
  From Remarks \ref{rmk 2} and \ref{rmk of bsde}, the fact that $u\in \sH_p^n$
   implies, in some sense $\{u(t,x)\}_{0\leq t\leq T}$ is a
semi-martingale of drift
    $F(t,x)_{0\leq t\leq T}$ and diffusion $v(t,x)_{0\leq t\leq T}.$ Further, by
  Lemma \ref{lem finite approximation H_pn} and the estimates in
  Remark \ref{rmk 2}, Doob-Meyer decomposition theorem implies
  the uniqueness of $(F,v)$. Therefore, the norm \eqref{stochastic norm1} is well defined.
   Without confusions,
   we shall always say that $F$ and $v$ are the drift term and diffusion term of
   $u$ respectively. In the following, we denote the diffusion term $v$ of $u$
    by $\mathbb{D}u.$

   On the other hand, it is worth noting that the
   elements  of $\mathscr{H}_{p}^{n}$ are assumed to be defined for all
   $(\omega,t)$ and take values in $\sD',$ and that
   $\mathscr{H}_{p}^{n}$ is a normed linear space in which we identify
   two elements $u_1$ and $u_2$ if $\|u_1 -u_2\|_{\mathscr{H}_{p}^{n}}=0.$
   In view of Definition \ref{stochastic space}, for any $p,q\in(1,\infty)$ and
   $n,r\in\bR,$ if $u\in\sH_p^n$ and
   $\|u\|_{\sH_q^r}<\infty,$ one can check that $u\in\sH_q^r$ and that,
    in particular, $\|u\|_{\sH_p^n}=0$ implies $\|u\|_{\sH_q^r}=0.$
\end{rmk}

\begin{thm}\label{stochastic banach property thm}
    The spaces $\mathscr{H}_{p}^{n}$ and $\sH_{p,0}^{n}$ equipped with norm
    \eqref{stochastic norm1} are Banach spaces. Moreover, we have
\begin{eqnarray}\label{thm1}
  \|u\|_{\bH_{p}^{n}} \leq
                    C(p,T)\|u\|_{\sH_{p}^{n}},\quad
  E\left[\sup_{t\leq T}\|u(t,\cdot)\|^p_{H_{p}^{n-2}}\right] \leq
                    C(p,T)\|u\|^p_{\sH_p^n }.
\end{eqnarray}
\end{thm}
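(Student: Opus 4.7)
The plan is to proceed in three steps: verify the norm axioms, establish the two inequalities of \eqref{thm1}, and prove completeness, treating $\sH_{p,0}^n$ at the end as a closed subspace.

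For the norm axioms, homogeneity and the triangle inequality are inherited termwise from the constituent norms on $\bH_p^{n-2}$ and on $L^p(\Omega,\sF_T,B_{p,p}^{n-2/p})$. The subtle point is that the decomposition $(F,v)$ in \eqref{stochastic dfn 1} must be uniquely determined by $u$ for \eqref{stochastic norm1} to depend only on $u$; this is flagged in Remark \ref{rmk 1} and follows from the uniqueness half of Lemma \ref{lem BSDE} applied in $H_p^{n-2}$, which is legitimate since $B_{p,p}^{n-2/p}$ embeds continuously into $H_p^{n-2}$ by Remark \ref{rmk 2}. Definiteness then holds modulo the standard identification of elements with zero norm.

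For the two inequalities in \eqref{thm1}, Remark \ref{rmk 2} already delivers
\[
  \|u\|_{\bH_{p,\infty}^{n-2}}+\|v\|_{\bH_{p,2}^{n-2}}\leq c(p,T)\bigl(\|F\|_{\bH_p^{n-2}}+(E\|u(T,\cdot)\|^p_{B_{p,p}^{n-2/p}})^{1/p}\bigr)\leq c(p,T)\|u\|_{\sH_p^n},
\]
which is precisely the second estimate. For the first, I would use the factorization $(1-\Delta)^{n/2}=(1-\Delta)^{(n-2)/2}(1-\Delta)$ to obtain $\|u\|_{\bH_p^n}\leq \|u\|_{\bH_p^{n-2}}+C\|u_{xx}\|_{\bH_p^{n-2}}$, then control $\|u\|_{\bH_p^{n-2}}\leq T^{1/p}\|u\|_{\bH_{p,\infty}^{n-2}}$ and invoke the previous display.

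For completeness, take a Cauchy sequence $\{u_k\}\subset \sH_p^n$ with associated pairs $(F_k,v_k)$. From the definition of the norm, $\{u_{k,xx}\}$, $\{F_k\}$, and $\{v_{k,x}\}$ are Cauchy in $\bH_p^{n-2}$ and $\{u_k(T,\cdot)\}$ is Cauchy in $L^p(\Omega,\sF_T,B_{p,p}^{n-2/p})$; denote their limits by $w$, $F$, $z$, $\eta$. Applying Lemma \ref{lem BSDE} to the BSDE for the difference $(u_k-u_l,v_k-v_l)$ shows that $\{u_k\}$ and $\{v_k\}$ are Cauchy in $\bH_{p,\infty}^{n-2}$ and $\bH_{p,2}^{n-2}$ respectively, with limits $u$ and $v$. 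Continuity of differentiation on $\sD'$ then identifies $u_{xx}=w$ and $v_x=z$ as distributional equalities, while $u(T,\cdot)=\eta$. Passing to the limit in \eqref{stochastic dfn 1} against each $\phi\in C^{\infty}_c$---with the stochastic integral handled by the BDG estimate of Remark \ref{rmk of bsde} applied to $v_k-v$, the drift integral by convergence of $F_k$ in $\bH_p^{n-2}$, and the terminal value by $L^p$-convergence---shows that $u$ inherits the decomposition with drift $F$ and diffusion $v$. Hence $u\in\sH_p^n$ and $\|u_k-u\|_{\sH_p^n}\to 0$, so $\sH_p^n$ is complete; $\sH_{p,0}^n$ is closed because the map $u\mapsto u(T,\cdot)$ is continuous in the $\sH_p^n$-norm. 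The main obstacle is ensuring that the limiting equation \eqref{stochastic dfn 1} holds \emph{for every} $t\in[0,T]$ with probability one rather than only almost everywhere in $t$; this I would secure via the pathwise continuity of $(u(t,\cdot),\phi)$ provided by Lemma \ref{lem BSDE}(ii) applied to the limiting BSDE.
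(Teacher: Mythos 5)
Your proposal is correct and follows essentially the same route as the paper: the second estimate is read off from Remark \ref{rmk 2}, the first from $\|u\|_{\bH_p^n}=\|(1-\Delta)u\|_{\bH_p^{n-2}}\leq \|u\|_{\bH_p^{n-2}}+\|u_{xx}\|_{\bH_p^{n-2}}$ together with the $\bH_{p,\infty}^{n-2}$ bound, and completeness is proved componentwise with the limit equation first obtained a.e.\ in $(t,\omega)$ and then upgraded to all $t$ via the pathwise continuity supplied by Lemma \ref{lem BSDE}. The only cosmetic difference is that you justify well-definedness of the norm via the uniqueness part of Lemma \ref{lem BSDE}, whereas the paper delegates this to Remark \ref{rmk 1} (Doob--Meyer); both are fine.
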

\begin{proof} The second inequality of (\ref{thm1}) is given in Remark \ref{rmk
2}. Since
\begin{eqnarray*}
  \begin{split}
\|u\|_{\bH_{p}^{n}}=\|(1-\Delta)u\|_{\bH_{p}^{n-2}}
 &\leq  \|u\|_{\bH_{p}^{n-2}}+\|u\|_{\sH_{p}^{n}}          \\
&\leq T^{1/p}\left( E[\sup_{t\leq
            T}\|u(t,\cdot)\|^p_{H_{p}^{n-2}}]\right)^{1/p}
            +\|u\|_{\sH_{p}^{n}},
  \end{split}
\end{eqnarray*}
we have the first inequality of (\ref{thm1}).

It remains for us to show the completeness of $\mathscr{H}_{p}^{n}$.
Let $\{u_j\}$ be a Cauchy sequence in $\mathscr{H}_{p}^{n}.$ Then it
is also a Cauchy sequence in $\bH_{p}^{n}$, and there exists
$u\in\bH_{p}^{n}$ such that
$$\lim_{j\to \infty}\|u-u_j\|_{\bH_{p}^{n}}= 0.$$
Furthermore, $\{ u_{jxx}\}$ is a Cauchy sequence in $\bH_{p}^{n-2}$
and
 $$\lim_{j\to \infty}\|u_{jxx}-u_{xx}\|_{\bH_{p}^{n-2}}=0. $$
For $u_j (T),F_j,$ and the corresponding  $u_j,$ there exist
 $u(T)\in L^{p}(\Omega,\sF_{T},B_{p,p}^{n-2/p})\subset
  L^{p}(\Omega,\sF_{T},H_{p}^{n-2})$
  and $F\in\bH_{p}^{n-2}$ such that
  $$\lim_{j\to \infty}\|u(T)-u_j
  (T)\|_{L^{p}(\Omega,\sF_{T},B_{p,p}^{n-2/p})}=
  0,\quad \lim_{j\to \infty}\|u(T)-u_j (T)\|_{L^{p}(\Omega,\sF_{T},H_{p}^{n-2})}=
  0,$$
  and
  $$
    \lim_{j\to \infty}\|F-F_j\|_{\bH_{p}^{n-2}}=0.
 $$
Let $v_j$ be the diffusion term of $u_j.$ Using the argument from Remark
\ref{rmk 2}, we conclude that there is $v\in \bH_{p}^{n-1}\cap
\bH_{p,2}^{n-2}$ such that
$$ \lim_{j\to \infty}\|v_x-(v_j)_x\|_{\bH_{p}^{n-2}}= 0\quad
\textrm{and}\quad
   \lim_{j\to \infty}\|v-v_j\|_{\bH_{p,2}^{n-2}}=0.
    $$
Since for any $\phi \in C^{\infty}_{c}$ the equality
  \begin{eqnarray}
    (u_j(t,\cdot),\phi)
            =(u_j(T,\cdot),\phi)+\int_{t}^{T}(F_j(s,\cdot),\phi)\, ds
                   -\sum_{r=1}^{m}\int_{t}^{T}(v_j^{k}(s,\cdot),\phi)\, dW^{k}_{s}
  \end{eqnarray}
  holds for all $t\leq T$ with probability 1, by taking on both sides limits in
  $L^1([0,T]\times\Omega,\sF_T\times\cB(\bR^d))$, we show that
   for any $\phi \in C^{\infty}_{c}$
equality \eqref{stochastic dfn 1} holds in $[0,T]\times\Omega$ almost
everywhere.

Furthermore, \eqref{thm1} implies that for $u$ (at least for a modification
of $u$), we have
$$\lim_{j\to \infty}E\sup_{t\leq T}\|u(t,\cdot)-u_j(t,\cdot)\|^p_{H_{p}^{n-2}}=0.
    $$
Since the processes $\{(u_j (t,\cdot),\phi), t\in [0,T]\},
j=1,2,dots$ are all continuous, it follows that
$\{(u(t,\cdot),\phi), t\in [0,T]\}$ is continuous. Therefore, for
any $\phi \in C^{\infty}_{c},$ equality \eqref{thm1} not only holds
in
 $[0,T]\times \Omega$ almost everywhere but also for all $\tau \leq T$
 almost surely. Hence, $u\in \sH_{p}^{n}$ and $u_j$ converges to $u$
 in $\sH_{p}^{n}.$ So, $\sH_{p}^{n}$ is a Banach space.

Similarly, we can check the completeness of $\sH_{p,0}^{n}$. The proof is
complete.
\end{proof}
\begin{rmk}\label{rmk sotchastic banach thm}
The estimate \eqref{thm1} can be verified
 for $u\mathbb{I}_{(t,T]},$ $t\in [0,T).$ Especially, we have
 $$E\sup_{s\in
(t,T]}\|u(s,\cdot)\|^p_{H_{p}^{n-2}} \leq
                    C(p,T)\|u\|^p_{\sH_p^n(t) } $$
with $\|u\|_{\sH_p^n(t) }:=\|u\mathbb{I}_{(t,T]}\|_{\sH_p^n}.$
\end{rmk}

Now, we show an embedding result about the stochastic Banach space
$\sH_p^n.$
\begin{prop}\label{thm Embedding}
    For $u\in \sH_p^n$ and $v=\bD u$, the following assertions hold:

  (i) If $\beta:=n-d/p> 0,$ then $(u,v)\in L^p((0,T],\sP,\mathcal
  {C}^{\beta}(\bR^d))\times L^p((0,T],\sP,\mathcal
  {C}^{\beta-1}(\bR^d))$ satisfying
  $$ E\left[ \int_0^T\|u(t,\cdot)\|_{\mathcal {C}^{\beta}(\bR^d)}^p\, dt\right]
        \leq C(n,d,p)\|u\|^p_{\bH_p^n}   \leq C(T,n,d,p)\|u\|^p_{\sH_p^n}, $$
  where $\mathcal {C}^{\beta}(\bR^d) $ is the Zygmund space which is different from the
   ordinary H$\ddot{o}$lder spaces $C^{\beta}(\bR^d)$ only if $\beta$ is an
  integer. In particular, if $p\in(1,2],$ we also have
  $$ E\left[ \int_0^T\|v(t,\cdot)\|_{\mathcal {C}^{\beta-1}(\bR^d)}^p\, dt\right]
        \leq C(T,n,d,p)\|u\|^p_{\sH_p^n}. $$

  (ii) If $n>l$ and $n-d/p= l-d/q,$ then
  $$E\left[\int_0^T \|u(t,\cdot)\|^p_{l,q}\, dt\right]
        \leq C(l,n,d,p)\|u\|_{\bH_p^n}^p\leq  C(T,l,n,d,p)\|u\|_{\sH_p^n}^p .
    $$ In particular, if $p\in (1,2],$ we also have
   $$E\left[\int_0^T \|v(t,\cdot)\|^p_{l-1,q}\, dt\right]
        \leq C(T,l,n,d,p)\|u\|_{\sH_p^n}^p.
    $$

  (iii) If $ q\geq p $ and $\theta\in (0,1),$ then for
    $$n\geq l-\frac{d}{q}+\frac{d}{p}+2(1-\theta), $$
    we have $u\in L^{p/\theta}((0,T],H_q^l)~(a.s.)$ and
    $$ E\left[ \Big( \int_0^T\|u(t,\cdot)\|^{p/\theta}_{l,q}\, dt \Big)^{\theta}\right]
        \leq C(T,n,l,q,d,p,\theta) \|u\|_{\sH_p^n}^p.
    $$
    In particular, if $$q>p \quad \textrm{and}\quad
    n\geq l+\frac{d}{p}+\frac{2q-2p-d}{q},$$
by taking $\theta=pq^{-1},$ we have
$$ E\left[ \Big( \int_0^T\|u(t,\cdot)\|^q_{l,q}\, dt \Big)^{p/q}\right]
        \leq C(T,n,l,q,d,p) \|u\|_{\sH_p^n}^p.
    $$
\end{prop}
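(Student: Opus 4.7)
The three claims share a common pattern: obtain a deterministic pointwise-in-$(\omega,t)$ embedding from $H_p^n(\bR^d)$ (or $H_p^{n-1}$) into the target space, then integrate in $(\omega,t)$ and apply the two inequalities in~\eqref{thm1} of Theorem~\ref{stochastic banach property thm} together with the bound on $\|v\|_{\bH_{p,2}^{n-2}}$ from Remark~\ref{rmk 2}. A secondary issue arises for the $v$-estimates: the norm~\eqref{stochastic norm1} controls $\|v_x\|_{\bH_p^{n-2}}$ but not $\|v\|_{\bH_p^{n-2}}$, and this is precisely where the restriction $p\in(1,2]$ enters.

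For part~(i), I would invoke the Sobolev embedding into Zygmund spaces, $H_p^n(\bR^d)\hookrightarrow \mathcal{C}^{n-d/p}(\bR^d)$ for $n-d/p>0$ (see \cite{Triebel_92}), apply it pointwise in $(\omega,t)$ to obtain $\|u(t,\cdot)\|_{\mathcal{C}^{\beta}}\leq C\|u(t,\cdot)\|_{n,p}$, raise to the $p$-th power, integrate in $(\omega,t)$, and close via the first inequality of~\eqref{thm1}. For the $v$-estimate one uses the shifted embedding $H_p^{n-1}\hookrightarrow \mathcal{C}^{\beta-1}$; the task reduces to bounding $\|v\|_{\bH_p^{n-1}}$ by $\|u\|_{\sH_p^n}$. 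The derivative piece $\|v_x\|_{\bH_p^{n-2}}$ is already in the $\sH_p^n$ norm, and for the remaining piece $\|v\|_{\bH_p^{n-2}}$ I would combine Remark~\ref{rmk 2} with H\"older's inequality in the time variable, $\|v\|_{\bH_p^{n-2}}\leq T^{1/p-1/2}\|v\|_{\bH_{p,2}^{n-2}}$, which is finite only when $p\leq 2$. Part~(ii) is the same plan with the Sobolev embeddings $H_p^n\hookrightarrow H_q^l$ and $H_p^{n-1}\hookrightarrow H_q^{l-1}$ (valid under $n>l$ and $n-d/p=l-d/q$) in place of the Zygmund embeddings.

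For part~(iii), the core ingredient is the interpolation inequality
$$\|u(t,\cdot)\|_{l,q}\leq C\,\|u(t,\cdot)\|_{n-2,p}^{1-\theta}\,\|u(t,\cdot)\|_{n,p}^{\theta},$$
obtained by composing the complex interpolation identity $[H_p^{n-2},H_p^n]_{\theta}=H_p^{n-2(1-\theta)}$ with the Sobolev embedding $H_p^{n-2(1-\theta)}\hookrightarrow H_q^l$; the scaling condition $n-2(1-\theta)-d/p\geq l-d/q$ required by the latter is exactly the hypothesis on $n$. Raising to the $p/\theta$-th power and integrating in $t$ yields
$$\Bigl(\int_0^T\|u(t,\cdot)\|_{l,q}^{p/\theta}\,dt\Bigr)^{\theta}\leq C\bigl(\sup_{t\leq T}\|u(t,\cdot)\|_{n-2,p}\bigr)^{p(1-\theta)}\Bigl(\int_0^T\|u(t,\cdot)\|_{n,p}^{p}\,dt\Bigr)^{\theta}.$$
Taking expectations and applying H\"older with conjugate exponents $1/(1-\theta)$ and $1/\theta$ splits the right-hand side into $(E\sup_t\|u(t,\cdot)\|_{n-2,p}^p)^{1-\theta}\cdot\|u\|_{\bH_p^n}^{p\theta}$, and both factors are controlled by $\|u\|_{\sH_p^n}^p$ via the two inequalities of~\eqref{thm1}. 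The advertised special case $q>p$, $\theta=p/q$ is then a direct substitution, since $p/\theta=q$ and $2(1-\theta)=(2q-2p)/q$.

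The main obstacle is the interpolation step in (iii): one must line up the complex interpolation of Bessel potential spaces with the Sobolev scaling so that the exponent $n-2(1-\theta)$ produces precisely the stated threshold, i.e.\ identify the correct Gagliardo--Nirenberg-type inequality. Once that pointwise estimate is secured, the remaining H\"older-in-probability step that splits into the $\bH_{p,\infty}^{n-2}$ and $\bH_p^n$ factors is routine.
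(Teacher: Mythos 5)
Your proposal is correct and follows essentially the same route as the paper: pointwise Sobolev/Zygmund embeddings integrated in $(\omega,t)$ and closed via \eqref{thm1} and Remark \ref{rmk 2} for (i)--(ii), and for (iii) the interpolation $\|f\|_{l,q}\leq C\|f\|_{n'-2,p}^{1-\theta}\|f\|_{n',p}^{\theta}$ with $n'=l+d/p-d/q+2(1-\theta)\leq n$ followed by the same sup-in-time versus integral-in-time split. The only cosmetic differences are that the paper closes the final step of (iii) with $a^{1-\theta}b^{\theta}\leq a+b$ rather than H\"older in probability, and it leaves implicit the $p\in(1,2]$ reduction $\|v\|_{\bH_p^{n-2}}\leq T^{1/p-1/2}\|v\|_{\bH_{p,2}^{n-2}}$ that you spell out.
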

\begin{proof}
  By Lemma \ref{lem BSDE} and Theorem \ref{stochastic banach property thm},
    the assertions (i) and (ii) are straightforward in view of the classical Sobolev
  embedding theorems, which say that under conditions in (i) and
  (ii), we have $H_p^n\subset\mathcal{C}^{\beta}(\bR^d)$ and $H_p^n\subset H_q^l,$
  respectively. On the other hand, from the Sobolev embedding
  theorems, we get
  $$\|f\|_{l,q}\leq C(l,d,q,p)\|f\|_{l+d/p-d/q,p}
        \leq C(l,d,q,p,\theta)\|f\|^{1-\theta}_{n'-2,p}
            \|f\|_{n',p}^{\theta}~~,
    $$
    where $n':=l+d/p-d/q+2(1-\theta)\leq n.$
    Hence,
\begin{eqnarray*}
  \begin{split}
      E\left[ \Big( \int_0^T\|u(t,\cdot)\|^{p/\theta}_{l,q}\, dt
      \Big)^{\theta}\right]
  \leq &C E\left[ \Big(\int_0^T\|u(t,\cdot)\|^{(1-\theta)p/\theta}_{n'-2,p}
    \|u(t,\cdot)\|_{n',p}^{p}\, dt \Big)^{\theta}\right] \\
  \leq &C E\left[ \Big(\int_0^T\|u(t,\cdot)\|^{(1-\theta)p/\theta}_{n-2,p}
    \|u(t,\cdot)\|_{n,p}^{p}\, dt \Big)^{\theta}\right] \\
  \leq &C E\left[ \sup_{t\leq T}\|u\|_{n-2,p}^{(1-\theta)p}
            \Big( \int_0^T\|u(t,\cdot)\|_{n,p}^p\, dt \Big)^{\theta} \right]\\
  \leq &C\left( E\left[\sup_{t\leq T}\|u\|_{n-2,p}^p\right]
             + \|u\|_{\bH_p^n}^p \right) \\
  \leq &C \|u\|_{\sH_p^n}^p.
  \end{split}
\end{eqnarray*}
The last inequality is derived from Theorem \ref{stochastic banach
property thm}, and $C=C(T,n,l,q,d,p,\theta).$ The proof is complete.
\end{proof}

\section{$L^p$ solution of BSPDEs}
\subsection{Assumptions and the notion of the solution to BSPDEs}
Let $B(\bR^d)$ be the Banach spaces of bounded continuous functions
on $\bR^d,$ $C^{|n|-1,1}(\bR^d)$ the Banach space of $|n|-1$ times
continuously differentiable functions with the $(|n|-1)$th
derivatives satisfying the Lipschitz condition on $\bR^d,$ and
$C^{|n|+\gamma}(\bR^d)$ the usual H$\ddot{\textrm{o}}$lder space.
 The space $B^{|n|+\gamma}$ of Krylov \cite{Krylov_99} is defined as follows.
\begin{displaymath}
    B^{|n|+\gamma}=
    \left \{ \begin{array}{ll}
     B(\bR^d) &\textrm{if $n=0,$}\\
     C^{|n|-1,1}(\bR^d)&\textrm{if $n=\pm 1,\pm 2,\dots,$} \\
     C^{|n|+\gamma}(\bR^d) &\textrm{otherwise.}
     \end{array} \right.
\end{displaymath}
 Here, $n \in (-\infty,\infty),$ and
$\gamma\in[0,1) $ is fixed such that $\gamma =0 $ if $n$ is an integer;
 $\gamma > 0$ otherwise is so small that $|n|+\gamma$ is not an integer.

Consider the following semi-linear BSPDE:
\begin{equation}\label{BSPDE}
  \left\{\begin{array}{l}
    \begin{split}
      -d u (t,x)=&\big[
      a^{ij}(t,x)u_{x^i x^j}(t,x) +\sigma^{ik}(t,x)v^{k}_{x^i}(t,x)
      +F(u,v,t,x)\big]\, dt\\
      &-v^l(t,x)dW^l_t,\quad (t,x)\in [0,T]\times \bR^{d};
    \end{split}\\
    \begin{split}
    u(T,x)=G(x),\quad x\in \bR^{d}.
    \end{split}
  \end{array}\right.
\end{equation}

Here and in the following, denote
$$u_{x^i x^j}:=\frac{\partial^2}{\partial x^i \partial x^j}u,
~ u_{x^i}:=\frac{\partial}{\partial x^i}u,
~v^k_{x^i}:=\frac{\partial}{\partial x^i}v^k,
    $$
$$ u_x:=\nabla u=(u_{x^1},\dots,u_{x^d}),
~u_{xx}:=(u_{x^ix^j})_{1\leq i,j \leq d},
    $$
and
$$\alpha ^{ij}:= \frac{1}{2} \sum_{k=1}^{m}\sigma ^{ik} \sigma^{jk}.$$

\begin{ass} ($super\textrm{-}parabolicity$) \label{ass1}
     There exists a positive constant $\lambda$ such
     that
\begin{equation}
    [a^{ij}(t,x)-\alpha^{ij}(t,x)] \xi^i\xi^j
    \geq \lambda|\xi|^2
\end{equation}
    holds almost surely for all $x,\xi\in\bR^d$ and $t\in [0,T].$
\end{ass}

%
\begin{ass}   \label{ass2}
    There exists an increasing function $\kappa:
    [0,\infty)\rrow[0,\infty)$ such that $k(s)\downarrow 0$ as $s\downarrow0$
    and
    \begin{equation}
     \sum_{i,j=1}^d|a^{ij}(t,x)-a^{ij}(t,y)|
     +\sum_{i=1}^d\sum_{k=1}^m|\sigma^{ik}(t,x)-\sigma^{ik}(t,y)|\leq
            \kappa(|x-y|)
    \end{equation}
    holds almost surely for all $(t,x,y)\in [0,T]\times \bR^d\times \bR^d.$
\end{ass}


\begin{ass}     \label{ass3}
     The functions $a^{ij}(t,x)$ and $\sigma^{ik}(t,x)$ are real-valued
     $\sP\times\cB (\bR^{d})$-measurable, such that
\begin{equation}
  \begin{split}
    a^{ij}(t,\cdot),\sigma^{ik}(t,\cdot)\in B^{|n|+\gamma}, ~\textrm{and}~
    \|a^{ij}(t,\cdot)\|_{B^{|n|+\gamma}}
    +\|\sigma^{ik} (t,\cdot)\|_{B^{|n|+\gamma}}\leq \Lambda,
  \end{split}
\end{equation}
 almost surely for $i,j=1,\dots,d,k=1,\dots,m,$ and $t\in [0,T].$
\end{ass}

\begin{ass}\label{ass4}
$F(0,0,\cdot,\cdot)\in\bH_{p}^{n}.$ For $(u,v)\in H_{p}^{n+2}\times
H_{p}^{n+1},$ $F(u,v,t,\cdot)$ is an $H_{p}^{n}$-valued $\sP$-measurable
process such that there is a continuous and decreasing function
$\varrho:(0,\infty)\rrow[0,\infty)$ such that
 for any $\eps
>0,$   we have
\begin{equation}
  \begin{split}
  &\|F(u_1,v_1,t,\cdot)-F(u_2,v_2,t,\cdot)\|_{n,p}  \\
  \leq &\eps (\|u_1-u_2\|_{n+2,p}+\|v_1-v_2\|_{n+1,p})+
        \varrho (\eps)(\|u_1-u_2\|_{n,p}+\|v_1-v_2\|_{n,p} ),\\
         &\quad ~~u_{1},u_2\in H_{p}^{n+2} \textrm{ and } v_{1},v_2\in H_{p}^{n+1},
  \end{split}
\end{equation}
 holds for any $(t,\omega)\in [0,T]\times \Omega.$
\end{ass}
\begin{rmk}
    Assumption \ref{ass4} implies that $F(u,v,t,x)$ is Lipchitz
  continuous with respect to $(u,v)\in H_{p}^{n+2} \times H_{p}^{n+1}$ in
  $H_{p}^{n}$ for any $(t,\omega)\in(0,T]\times \Omega$, that is
  there is $ C>0$ such that
\begin{equation*}
  \begin{split}
 &\|F(u_1,v_1,t,\cdot)-F(u_2,v_2,t,\cdot)\|_{n,p} \\
 \leq&
        C (\|u_1-u_2\|_{n+2,p}+\|v_1-v_2\|_{n+1,p}), u_{1},u_2\in H_{p}^{n+2}
        \textrm{ and }  v_{1},v_2\in H_{p}^{n+1}.
  \end{split}
\end{equation*}
It also implies that $F$ does not depend on $u$ and $v$ if $\varrho
\equiv 0.$
\end{rmk}

\begin{defn}\label{definition of solution}
   We call $ u\in \sH_{p}^{n+2}$ a solution of BSPDE \eqref{BSPDE} if
   for any $\phi \in C^{\infty}_{c},$ the equality
\begin{equation}\label{def of solution}
  \begin{split}
  &(u(\tau,\cdot),\phi)=(G,\phi)+\int_{\tau}^{T}( a^{ij}(t,\cdot)u_{x^i x^j}(t,\cdot) +
       \sigma^{ik}(t,\cdot)(\bD u)^{k}_{x^i}(t,\cdot)
          +F(u,\bD u,t,\cdot),\phi)\, dt\\
  &~~~~~~~~~~~~~~~~~~~~~~~-\int_{\tau}^{T}((\bD u)^l(t,\cdot),\phi)\, dW^l_t,
  \end{split}
\end{equation}
 holds for all $\tau\in[0,T]$ with probability 1. As usual, we also call $(u,\bD u)$
 a solution pair of BSPDE \eqref{BSPDE}.
\end{defn}

\begin{rmk}\label{rmk modification solution}
  Assume that $(u,v)$ belongs to $ \bH_p^{n+2}\times
  \bH_p^{n+1}$ with
  $u(T,\cdot)\in L^p(\Omega,\sF_T,B_{p,p}^{n+2-2/p}),$ and further that
the equality
\begin{equation}\label{def of solution1}
  \begin{split}
  (u(\tau,\cdot),\phi)=&(G,\phi)+\int_{\tau}^{T}( a^{ij}(t,\cdot)u_{x^i x^j}(t,\cdot) +
       \sigma^{ik}(t,\cdot)v^{k}_{x^i}(t,\cdot)
          +F(u,v,t,\cdot),\phi)\, dt\\
  &-\int_{\tau}^{T}(v^l(t,\cdot),\phi)\, dW^l_t,\quad
  \forall (t, \phi) \in [0,T)\times C^{\infty}_{c}
  \end{split}
\end{equation}
 holds with probability 1.
Then by Lemma \ref{lem BSDE}, $u$ has a modification, still denoted
by itself, such that the pair $(u,v)\in \bH_{p,\infty}^{n}\times
\bH_{p,2}^{n}$ solves the Banach space-valued BSDE \eqref{bsde} with
$F(t, \cdot):= a^{ij}(t,\cdot)u_{x^i x^j}(t,\cdot)
+\sigma^{ik}(t,\cdot)v^{k}_{x^i}(t,\cdot)+F(u(t,
\cdot),v(t,\cdot),t,\cdot), t\in [0,T]$, belonging to $\bH^{n+2}_p.$
Hence, by
 Lemma \ref{lem BSDE} for any $\phi \in C^{\infty}_{c},$
  \eqref{def of solution1} holds
for all $\tau\in[0,T]$ with probability 1. Hence $u\in \sH_p^n.$
\end{rmk}

Note that Definition \ref{definition of solution} includes as a particular
case the notion of strong solution to deterministic parabolic PDEs. For
example, consider the particular case:
\begin{equation}\label{Deterministic equation}
  \left\{\begin{array}{l}
    -\frac{\partial}{\partial t} u=\Delta u+f,\\
    u(T)=u_T.
  \end{array}\right.
\end{equation}
By reversing the time, we have the following proposition
(see~\cite{Ladyzhenskaia_68}).
\begin{prop}\label{THEOREM PDE}
  For any $f\in L^p([0,T]\times \bR^d),$ and $u_T\in B_{p,p}^{2-2/p}$ with $p\in (1,\infty),$
   there
  exists a unique solution $u\in W^{1,2}_p(T)$ to Equation \eqref{Deterministic
  equation} with terminal data $u(T)=u_T.$ In addition,
  \begin{eqnarray}\label{deterministic a}
  \begin{split}
    \|u\|_{W^{1,2}_p}\leq C(d,p,T)(\|f\|_{L^p((0,T)\times \bR^d)}+
        \|u_T\|_{B_{p,p}^{2-2/p}}),
  \end{split}
\end{eqnarray}
%
  where $$\|u\|_{W^{1,2}_p}:=\|u_{xx}\|_{L^p((0,T)\times \bR^d)}
        +\|u_x\|_{L^p((0,T)\times \bR^d)}+\|u\|_{L^p((0,T)\times \bR^d)}
        +\|u_t\|_{L^p((0,T)\times \bR^d)}.$$
\end{prop}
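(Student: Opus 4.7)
The plan is to reduce the backward Cauchy problem to a standard forward one by time reversal, and then invoke the classical parabolic $L^p$ maximal regularity theory for the heat equation on $\bR^d$. Setting $\tilde u(t,x) := u(T-t,x)$ and $\tilde f(t,x):= f(T-t,x)$, equation \eqref{Deterministic equation} becomes
\begin{equation*}
  \partial_t \tilde u = \Delta \tilde u + \tilde f, \qquad \tilde u(0,x) = u_T(x),
\end{equation*}
with $\tilde f\in L^p((0,T)\times \bR^d)$ and initial datum $u_T\in B_{p,p}^{2-2/p}$. Since the norm $\|u\|_{W^{1,2}_p}$ is invariant under the substitution $t\mapsto T-t$, it suffices to prove existence, uniqueness, and the bound \eqref{deterministic a} for this forward problem.

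For existence I would write the solution explicitly through Duhamel's formula,
\begin{equation*}
  \tilde u(t,\cdot) = e^{t\Delta} u_T + \int_0^t e^{(t-s)\Delta}\tilde f(s,\cdot)\, ds,
\end{equation*}
and then establish the two key estimates separately. The contribution of the source term is controlled by the parabolic maximal regularity estimate $\|\partial_t w\|_{L^p}+\|w_{xx}\|_{L^p}\leq C\|\tilde f\|_{L^p}$ for $w(t)=\int_0^t e^{(t-s)\Delta}\tilde f(s)\,ds$, which is the classical Calder\'on--Zygmund estimate for the heat-kernel singular integral; it follows either from the Mikhlin multiplier theorem applied to the symbol $|\xi|^2/(i\lambda+|\xi|^2)$ after a Laplace transform in $t$, or directly from the kernel bounds of $\partial_t G_t$ and $\partial_{x_ix_j}G_t$. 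The contribution of the initial datum is the homogeneous part $e^{t\Delta}u_T$, and the norm equivalence
\begin{equation*}
  \|u_T\|_{B_{p,p}^{2-2/p}} \asymp \|u_T\|_{L^p} + \|e^{t\Delta}u_T\|_{W^{1,2}_p((0,T)\times\bR^d)}
\end{equation*}
is precisely the trace characterization of $B_{p,p}^{2-2/p}$ as the interpolation space between $L^p$ and $W^{2,p}$; this identifies $B_{p,p}^{2-2/p}$ as the optimal space in which $u_T$ must lie for $\tilde u\in W^{1,2}_p$ to hold up to $t=0$. Combining the two estimates gives \eqref{deterministic a}.

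Uniqueness reduces to showing that a solution $w\in W^{1,2}_p$ of $\partial_t w=\Delta w$ on $(0,T)\times\bR^d$ with $w(0)=0$ vanishes, which follows by pairing with a smooth compactly supported test function, integration by parts in time and space, and a density argument; alternatively one can apply the Fourier transform in $x$ and solve the resulting ODE pointwise in $\xi$. The principal obstacle is the maximal-regularity estimate for the Duhamel integral, since $\partial_t$ and $\partial_{xx}$ acting on the heat-kernel convolution produce borderline singular integrals; the $L^p$ bound for $1<p<\infty$ is non-trivial and is exactly the content of the classical reference \cite{Ladyzhenskaia_68} (see also Krylov's treatment via Mikhlin multipliers). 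Once that estimate is in hand, together with the Besov trace theory, the proposition follows.
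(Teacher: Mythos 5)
Your proposal is correct and follows essentially the same route as the paper, which simply reverses time and cites the classical $L^p$ parabolic theory of \cite{Ladyzhenskaia_68} for the resulting forward Cauchy problem. The additional detail you supply (Duhamel's formula, the Calder\'on--Zygmund maximal regularity estimate, and the Besov trace characterization of $B_{p,p}^{2-2/p}$) is a correct unpacking of exactly what that citation contains.
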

In Proposition \ref{THEOREM PDE}, the sapce $W^{1,2}_p$ can be replaced
with $\sH_p^2$ in an equivalent way. This fact also explains why the Besov
space $B_{p,p}^n$ is used for the terminal value in Definition
\ref{stochastic space}.

\subsection{The case of space-invariant leading coefficients}

Consider the following BSPDE
\begin{equation}\label{independent of space}
  \left\{\begin{array}{l}
    \begin{split}
      -d u (t,x)=&\big[
                    a^{ij}(t)u_{x^i x^j}(t,x) +\sigma^{ik}(t)v^{k}_{x^i}(t,x)
                       +F(t,x) \big]dt\\
      &-v^l(t,x)\, dW^l_t,\quad (t,x)\in [0,T]\times \bR^{d};
    \end{split}\\
    \begin{split}
    u(T,x)=G(x),\quad x\in \bR^{d}
    \end{split}
  \end{array}\right.
\end{equation}
where $(F,G)\in \bH_{p}^{n} \times
                L^{p}(\Omega,\sF_{T},H_p^{n+1}),$ with $p\in(1,2]$ and
                $n\in\bR.$

\begin{thm}\label{thm independent space}
Assume that the coefficients $a^{ij}$ and $\sigma^{il}$
 $i,j=1,\dots,d,l=1,\dots,m,$  are $\sP$-measurable real-valued functions
which are defined on $\Omega\times [0,T]$ and bounded by a positive
constant $\Lambda,$ and also that they satisfy the
$super\textrm{-}parabolicity$ condition \ref{ass1}. Take $(F,G)\in
\bH_{p}^{n} \times
                L^{p}(\Omega,\sF_{T},H_p^{n+1}),p\in(1,2],n\in\bR.$
    Then, we have

    (i) BSPDE \eqref{independent of space} has a unique solution
    $u\in\sH_{p}^{n+2}$ and for this solution, we have
    $$\|u\|_{\sH_{p}^{n+2}}\leq C(T,n,d,p,\lambda,\Lambda)
    \left(
      \|G\|_{L^{p}(\Omega,\sF_{T},H_p^{n+1})}+
          \|F\|_{\bH_{p}^{n}}
    \right);$$

    (ii)~we have $u\in C([0,T],H_{p}^{n})$ almost surely and
    $$ \|u\|_{\bH_{p,\infty}^{n}}+\|\bD u\|_{\bH_{p,2}^{n}}\leq
            C(T,n,d,p,\lambda,\Lambda)
            \left(\|F\|_{\bH^{n}_{p}}+\|G\|_{L^{p}(\Omega,\sF_{T},H_p^{n+1})}
        \right);$$

    (iii) in particular, for the case $G\equiv 0$, there is a constant
     $C(d,p,\lambda,\Lambda)$ which does not depend on $T,$ such that
    $$ \|u_{xx}\|_{\bH_{p}^{n}}+\|(\bD u)_{x}\|_{\bH_{p}^{n}} \leq
                        C(d,p,\lambda,\Lambda)~\|F\|_{\bH_{p}^{n}},
    \|u\|_{\sH_{p}^{n+2}} \leq
                            C(d,p,\lambda,\Lambda)~\|F\|_{\bH_{p}^{n}}
                        .
    $$
\end{thm}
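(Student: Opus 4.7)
My overall strategy would be duality with a forward (adjoint) SPDE, transferring estimates via Krylov's $L^{p'}$-theory \cite{Krylov_99} with $p'=p/(p-1)\in[2,\infty)$. Existence would then combine the $L^2$-theory of BSPDEs (e.g.\ \cite{DuMeng09}) with an approximation scheme driven by the $L^p$ a priori estimate.

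For the a priori estimate, given a candidate solution $(u,\bD u)\in\sH_p^{n+2}$ and any $f\in\bH_{p'}^{-n-2}$, I would consider the forward SPDE
\begin{equation*}
 d\psi=\big[a^{ij}(t)\psi_{x^ix^j}+f\big]\,dt-\sigma^{ik}(t)\psi_{x^i}\,dW^k_t,\qquad \psi(0,\cdot)=0.
\end{equation*}
Because the leading coefficients depend only on $(t,\omega)$, integration by parts in $x$ is clean, and since $\sum_k\sigma^{ik}\sigma^{jk}$ is insensitive to the sign change $\sigma\mapsto-\sigma$, the SPDE inherits the super-parabolicity in Assumption~\ref{ass1}. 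Krylov's theorem then provides a unique $\psi$ with $\|\psi\|_{\sH_{p'}^{-n}}\le C\|f\|_{\bH_{p'}^{-n-2}}$, with a $T$-independent constant. Applying It\^o's formula to $(u(t,\cdot),\psi(t,\cdot))$, the $(a^{ij}u_{x^ix^j},\psi)$ terms cancel against $(u,a^{ij}\psi_{x^ix^j})$ after two integrations by parts, while the $(\sigma^{ik}v^k_{x^i},\psi)$ contribution cancels against the quadratic-variation bracket $(v^l,-\sigma^{il}\psi_{x^i})$ after one integration by parts and an index relabeling, yielding
\begin{equation*}
 E(G,\psi(T))=E\!\int_0^T\!\big[(u,f)-(F,\psi)\big]\,dt.
\end{equation*}
Hölder and duality over $f$ give $\|u\|_{\bH_p^{n+2}}\le C(\|F\|_{\bH_p^n}+\|G\|_{L^p(\Omega,\sF_T,H_p^{n+1})})$; the remaining pieces of $\|u\|_{\sH_p^{n+2}}$ and assertion (ii) then follow from Lemma~\ref{lem BSDE} applied to the Banach-space-valued BSDE satisfied by $(u,\bD u)$ with the now $\bH_p^n$-bounded total source $a^{ij}u_{x^ix^j}+\sigma^{ik}v^k_{x^i}+F$.

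For existence, I would approximate $(F,G)$ by $(F_m,G_m)$ lying simultaneously in the $\bH_p^n\times L^p(\Omega,H_p^{n+1})$ and the $\bH_2^n\times L^2(\Omega,H_2^{n+1})$ categories (combining Lemma~\ref{lem finite approximation H_pn} with spatial mollification and truncation), obtain solutions $u_m\in\sH_2^{n+2}\subset\sH_p^{n+2}$ from the existing $L^2$ theory, and use the a priori estimate to conclude $\{u_m\}$ is Cauchy in $\sH_p^{n+2}$. Its limit is the desired solution; uniqueness follows from linearity plus the same estimate. For part~(iii), with $G\equiv 0$ the boundary term $E(G,\psi(T))$ vanishes, and the remaining estimate inherits the $T$-independence of Krylov's source-only bound.

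The principal obstacle I expect is the trace matching at $t=T$: Definition~\ref{stochastic space} uses the Besov class $B_{p,p}^{n+2-2/p}$ for the terminal, whereas the hypothesis gives $G\in H_p^{n+1}$ and the dual SPDE produces $\psi(T)\in L^{p'}(\Omega,B_{p',p'}^{-n-2+2/p})$. At $p=2$ the two scales coincide exactly ($B_{2,2}^s=H_2^s$), while for $p\in(1,2)$ one needs the compatible embeddings $H_p^{n+1}\hookrightarrow B_{p,p}^{n+2-2/p}$ and $B_{p',p'}^{-n-2+2/p}\hookrightarrow H_{p'}^{-n-1}$, absorbing the $\epsilon$-loss between Bessel and Besov scales into the regularity gap $n+1>n+2-2/p$. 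This is precisely the spot where the restriction $p\le 2$ is essential, mirroring the $p'\ge 2$ restriction on the SPDE side.
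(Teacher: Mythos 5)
Your overall strategy (duality against a forward SPDE solved by Krylov's $L^{p'}$-theory with $p'=p/(p-1)\ge 2$, followed by approximation from the $L^2$ theory) is exactly the paper's, and your observations about where $p\le 2$ enters and about the Besov/Bessel trace matching are on target. But there is a genuine gap in the a priori estimate: your dual SPDE carries a source $f$ only in the drift, so the duality identity you derive, $E(G,\psi(T))=E\int_0^T[(u,f)-(F,\psi)]\,dt$, controls only $\sup_f E\int_0^T(u,f)\,dt$, i.e.\ $\|u\|_{\bH_p^{n+2}}$. It gives you nothing on $\|(\bD u)_x\|_{\bH_p^{n}}$ (equivalently $\|\bD u\|_{\bH_p^{n+1}}$), which is an explicit summand of the $\sH_p^{n+2}$ norm in Definition~\ref{stochastic space} and is indispensable later for the perturbation/continuity argument with variable coefficients. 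Your fallback --- feeding the ``total source'' $a^{ij}u_{x^ix^j}+\sigma^{ik}v^k_{x^i}+F$ into Lemma~\ref{lem BSDE} --- is circular, because that source contains $\sigma^{ik}v^k_{x^i}$, whose membership in $\bH_p^n$ is precisely the missing estimate; and in any case Lemma~\ref{lem BSDE} only returns $v\in\bH_{p,2}^{n}$, with no gain of a spatial derivative. The paper closes this by putting a second source into the martingale part of the dual equation, $dW$-coefficient $-\sigma^{i}\eta_{x^i}+g$ with $g\in\bH_{p'}^{-n-1}$, so that the cross-variation produces the extra pairing $(v,g)$ and the duality identity becomes $E\int_0^T[(u,f)+(v,g)]\,dt=(G,\eta(T))+E\int_0^T(F,\eta)\,dt$; taking the supremum over $(f,g)$ then bounds $\|u\|_{\bH_p^{n+2}}+\|v\|_{\bH_p^{n+1}}$ simultaneously.

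A second, related omission concerns assertion (iii). The $T$-independent bound on $\|u_{xx}\|_{\bH_p^n}$ does follow as you say from Krylov's source-only estimate (the paper runs the duality again with $(f,g)$ replaced by $(f_{xx},g_{xx})$), but the $T$-independent bound on $\|(\bD u)_x\|_{\bH_p^n}$ does not come for free: the paper needs a separate argument, transforming $u$ by the It\^o--Wentzell change of variables $\zeta(t,x)=u(t,x+\int_0^t\sigma(s)\,dW_s)$ and dualizing against the SPDE~\eqref{1003302} whose gradient estimate rests on Proposition~\ref{lem 1 independent of space variable} (hence on the Littlewood--Paley-type Lemma~\ref{Harmonic result 1}, valid only for $p'\ge 2$). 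This step is absent from your outline and cannot be reconstructed from the pieces you do have.
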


In view of Lemma \ref{lem BSDE} and Remark \ref{rmk modification solution},
the assertions for $p=2$ can be deduced from
 \cite{DuMeng09,Hu_Peng_91,Zhou_92}, while Theorem \ref{thm independent
space} for $p\in(1,2)$ seems to be new.
The proof of  Theorem~\ref{thm independent space} will appeal to a
harmonic analysis result which is due to Krylov \cite[Theorem
2.1]{Krylov_94}.
\begin{lem}\label{Harmonic result 1}
  Let $H$ be a Hilbert space, $p\in[2,\infty),$ $-\infty \leq a <b\leq \infty,$
  $g\in L^p((a,b)\times \bR^d,H).$ Then
\begin{eqnarray}
  \begin{split}
  \int_{\bR^d} \int_a^b \big[
    \int_a^t |\nabla T_{t-s}g(s,\cdot)(x)|_H^2ds
            \big]^{p/2}dtdx \leq
            C(d,p)\int_{\bR^d} \int_a^b |g(t,x)|_H^pdtdx
  \end{split}
\end{eqnarray}
where $T_t:=e^{\Delta t},~t\geq 0,$ is the semigroup corresponding to the
heat equation $\frac{\partial u}{\partial t}=\Delta u $ in $\bR^d.$
\end{lem}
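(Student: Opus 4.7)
The inequality is a parabolic Littlewood--Paley square-function estimate, and the cleanest approach is to recognise the left-hand side as the $L^p$-norm of a Hilbert-space-valued singular integral applied to $g$. The plan is: prove $L^2$-boundedness by Plancherel, then extend to $p\in(2,\infty)$ via a vector-valued Calder\'on--Zygmund argument in the parabolic metric.

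After extending $g$ by zero so that we may take $a=-\infty$, $b=+\infty$, and reducing to $H=\bR$ by separability, set $\mathcal{K}:=L^2(\bR;H)$ (in a variable $s$) and define the linear operator
\[
(\mathcal{T}g)(t,x)(s) := \mathbb{I}_{\{s<t\}}\,\nabla T_{t-s}g(s,\cdot)(x),
\]
so that $|(\mathcal{T}g)(t,x)|_{\mathcal{K}}^2$ is exactly the square-function integrand in the lemma. The claim becomes $\|\mathcal{T}g\|_{L^p(\bR\times\bR^d;\mathcal{K})}\le C(d,p)\|g\|_{L^p(\bR\times\bR^d;H)}$. For $p=2$, Plancherel in $x$ together with Fubini gives
\[
\|\mathcal{T}g\|_{L^2}^2 = \int_{\bR}\!\int_{\bR^d}\!\int_{s}^{\infty}|\xi|^2 e^{-2(t-s)|\xi|^2}|\widehat{g}(s,\xi)|_H^2\,dt\,d\xi\,ds = \tfrac12\|g\|_{L^2}^2,
\]
using $\int_s^\infty |\xi|^2 e^{-2(t-s)|\xi|^2}\,dt = 1/2$.

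For general $p$, note that $\mathcal{T}$ is an integral operator with $\mathcal{K}$-valued kernel
\[
\mathbf{K}((t,x);(s,y)) = \mathbb{I}_{\{s<t\}}\,\nabla_x K_{t-s}(x-y), \qquad K_\tau(z)=(4\pi\tau)^{-d/2}e^{-|z|^2/(4\tau)}.
\]
Equip $\bR\times\bR^d$ with the parabolic quasi-distance $d_P((t,x),(s,y))=\sqrt{|t-s|}+|x-y|$ and Lebesgue measure, making it a space of homogeneous type of homogeneous dimension $d+2$. Using the standard Gaussian bounds $|\nabla_x K_\tau(z)|\lesssim \tau^{-(d+1)/2}e^{-c|z|^2/\tau}$ together with analogous bounds on $|\nabla_x^2 K_\tau(z)|$ and $|\partial_\tau \nabla_x K_\tau(z)|$, a change of variables $t-s=r^2$, $x-y=rz$ yields the parabolic H\"ormander smoothness condition
\[
\int_{d_P((t,x),(s,y))\,\ge\, 2 d_P((s,y),(s',y'))}\!\|\mathbf{K}((t,x);(s,y))-\mathbf{K}((t,x);(s',y'))\|_{\mathcal{K}}\,dt\,dx \le C(d),
\]
uniformly in $(s,y),(s',y')$. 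Combined with the $L^2$-boundedness above, the vector-valued Calder\'on--Zygmund theorem on this space of homogeneous type delivers $\mathcal{T}:L^p\to L^p(\mathcal{K})$ boundedly for every $p\in(1,\infty)$, in particular for $p\in[2,\infty)$.

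The main obstacle is the H\"ormander estimate in the last display: one must control the $\mathcal{K}=L^2_s$-norm of the kernel difference by applying the mean value inequality to $\nabla_x K_\tau$ in both spatial and temporal variables and then use the Gaussian tails to dominate the resulting integrand by a negative power of $d_P((t,x),(s,y))$ with sufficient decay, so that integration in $(t,x)$ over the complement of the parabolic ball of radius $2d_P((s,y),(s',y'))$ converges to a constant independent of $(s,y),(s',y')$.
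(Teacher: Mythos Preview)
The paper does not prove this lemma at all: it simply records it as Krylov's result \cite[Theorem 2.1]{Krylov_94} and uses it as a black box. So there is nothing in the paper to compare against beyond the citation. Krylov's own argument proceeds through a pointwise estimate on the Fefferman--Stein sharp maximal function of the square function in terms of the parabolic maximal function of $|g|^2$, followed by the Fefferman--Stein and Hardy--Littlewood maximal inequalities; it is Calder\'on--Zygmund in spirit, but it does not pass through a kernel H\"ormander condition.

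Your $L^2$ step is clean and correct. The gap is in the Calder\'on--Zygmund formulation. In the Benedek--Calder\'on--Panzone framework you are invoking, the kernel must be a genuine operator from $H$ into $\mathcal{K}=L^2(\bR_s;H)$ at each pair of points $((t,x),(s,y))$. But for your operator $(\mathcal{T}g)(t,x)(s')=\mathbb{I}_{\{s'<t\}}\nabla T_{t-s'}g(s',\cdot)(x)$, the internal variable $s'$ in the target space $\mathcal{K}$ is \emph{the same} as the input time variable, so the would-be kernel acts on $h\in H$ by $[\mathbf{K}((t,x);(s,y))h](s')=\delta(s'-s)\,\mathbb{I}_{\{s<t\}}\nabla_xK_{t-s}(x-y)h$, a Dirac mass in $s'$. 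This is not an element of $\mathcal{L}(H,\mathcal{K})$, and for $s\ne s'$ the quantity $\|\mathbf{K}((t,x);(s,y))-\mathbf{K}((t,x);(s',y'))\|_{\mathcal{K}}$ in your H\"ormander integral is formally infinite. The expression you wrote for $\mathbf{K}$ is an $\bR^d$-valued scalar, not a $\mathcal{K}$-valued object, and verifying a parabolic H\"ormander bound for that scalar kernel yields $L^p$-boundedness of $g\mapsto\int_{-\infty}^t\nabla T_{t-s}g(s,\cdot)\,ds$, which is a different operator from the square function.

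The result is salvageable along these lines, but one has to bypass the pointwise kernel norm: either run the Calder\'on--Zygmund decomposition directly and estimate $|(\mathcal{T}a)(t,x)|_{\mathcal{K}}$ for mean-zero atoms $a$ supported on parabolic cubes (this works, using the smoothness of $\nabla K_\tau$), or follow Krylov and control the sharp maximal function. Your last paragraph already hints that the H\"ormander estimate is the ``main obstacle''; the obstacle is more basic than the integral bound---the integrand itself is not well defined in the form you wrote.
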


\begin{rmk}
The assertion of Lemma~\ref{Harmonic result 1} is not true for
$p<2.$
\end{rmk}

We  have the following more general version.

\begin{prop}\label{lem 1 independent of space variable}
    Let $a^{ij}(t)$ satisfy the strong ellipticity condition, i.e.
    there exit two positive constants $\lambda_1$ and $\Lambda_1$ such that
\begin{equation}
    \Lambda_1 |\xi|^2 \geq a^{ij}(t)\xi^i \xi^j\geq \lambda_1 |\xi|^2
\end{equation}
    holds for all $\xi\in\bR^d,t\geq 0$ with probability 1.
Assume that
    $g\in \bH_{p}^{n}$ with $p\in[2,\infty)$ and $n\in\bR.$
    Then, the SPDE
    \begin{equation}\label{spde harmonic1}
        \left\{\begin{array}{l}
        \begin{split}
          d \eta (t,x)=
              a^{ij}(t)\eta_{x^i x^j}(t,x)
                        dt+
                        g^l(t,x)dW^l_t,\quad
           ~~~ (t,x)\in [0,T]\times \bR^{d},
       \end{split}\\
      \begin{split}
       \eta(0,x)=0,~~~~~~x\in \bR^{d},
             \end{split}
     \end{array}\right.
    \end{equation}
    has a unique solution $\eta\in \bH^{n+1}_p$ such that
       for any $\phi \in C^{\infty}_{c}$, the equality
\begin{equation}\label{1003_7.4}
  \begin{split}
  &(\eta(\tau,\cdot),\phi)=\int^{\tau}_{0}( a^{ij}(t)\eta_{x^i
  x^j}(t,\cdot),\phi)
          dt+\int^{\tau}_{0}(g^l(t,\cdot),\phi)dW^l_t,
  \end{split}
\end{equation}
 holds for all $\tau\in(0,T]$ with probability 1, and there holds the
 following estimate
 \begin{equation}\label{SPDE esti harmonic1}
  \begin{split}
 \| \eta _x \| _{\bH^{n}_p} \leq C(d,p,\lambda_1,\Lambda_1)
                \|g\|_{\bH_{p}^{n}}.
  \end{split}
\end{equation}
\end{prop}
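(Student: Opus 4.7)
My plan is to exploit the $x$-independence of the coefficients to solve \eqref{spde harmonic1} explicitly via Fourier analysis, and then to deduce the estimate \eqref{SPDE esti harmonic1} from Krylov's harmonic lemma (Lemma~\ref{Harmonic result 1}), reducing general $n\in\bR$ to $n=0$ by commutation.

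\emph{Step 1: reduction and existence.} Since $(1-\Delta)^{n/2}$ is a Fourier multiplier and commutes with the space-invariant generator $a^{ij}(t)\partial_{ij}^2$, applying it to both sides of \eqref{spde harmonic1} maps $(\eta,g)\mapsto((1-\Delta)^{n/2}\eta,(1-\Delta)^{n/2}g)$, reducing the case of general $n\in\bR$ to the case $n=0$. For existence, the Fourier transform in $x$ turns \eqref{spde harmonic1} into the $\xi$-parametrised family of scalar linear SDEs
$$d\hat\eta(t,\xi)=-a^{ij}(t)\xi^i\xi^j\hat\eta(t,\xi)\,dt+\hat g^l(t,\xi)\,dW^l_t,\quad \hat\eta(0,\xi)=0,$$
whose explicit solution $\hat\eta(t,\xi)=\int_0^t\exp\bigl(-\int_s^t a^{ij}(r)\xi^i\xi^j\,dr\bigr)\hat g^l(s,\xi)\,dW^l_s$ inverts to the stochastic convolution $\eta(t,x)=\int_0^t(P_{s,t}g^l(s,\cdot))(x)\,dW^l_s$, where $P_{s,t}$ is convolution with the Gaussian kernel of covariance $2A(s,t)$, $A(s,t):=\int_s^t(a^{ij}(r))\,dr$. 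The integral form \eqref{1003_7.4} and uniqueness of the solution follow at once from this representation.

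\emph{Step 2: the gradient estimate.} Because $p\geq 2$, the Burkholder--Davis--Gundy inequality applied pathwise at each fixed $x$ gives
$$E\,|\eta_x(t,x)|^p\leq C_p\,E\Bigl(\int_0^t|\nabla P_{s,t}g(s,\cdot)(x)|^2\,ds\Bigr)^{p/2},$$
so by Fubini it suffices to prove the square-function bound
$$\int_{\bR^d}\!\int_0^T\Bigl(\int_0^t|\nabla P_{s,t}g(s,\cdot)(x)|^2\,ds\Bigr)^{p/2}dt\,dx\leq C\|g\|^p_{\bH_p^0}.$$
When $a^{ij}\equiv\tfrac12\delta^{ij}$ this is exactly Lemma~\ref{Harmonic result 1}. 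For general $a^{ij}(t)$ the symbol of $\nabla P_{s,t}$ is $i\xi e^{-A(s,t)\xi\cdot\xi}$, and the ellipticity sandwich $\lambda_1(t-s)I\leq A(s,t)\leq\Lambda_1(t-s)I$ makes it pointwise comparable to the symbol of $\nabla T_{c(t-s)}$ for $c\in[\lambda_1,\Lambda_1]$; the Littlewood--Paley/$g$-function argument underlying Lemma~\ref{Harmonic result 1} therefore adapts, yielding the bound with constants depending on $d,p,\lambda_1,\Lambda_1$.

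The main obstacle is transferring the square-function bound from the constant-coefficient heat semigroup to the time-inhomogeneous semigroup $P_{s,t}$: the Fourier-space comparison makes the estimate plausible, but producing the correct dependence on $\lambda_1,\Lambda_1$ requires careful bookkeeping. An alternative that avoids this step entirely is to invoke directly the general $L^p$-theory of SPDEs of Krylov~\cite{Kryl96,Krylov_99}, of which the present proposition is a particular case; this is likely the route the authors take, since that theory is already used throughout the paper.
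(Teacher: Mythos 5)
Your Step 1 and the model case of Step 2 essentially match the paper: the reduction to $n=0$ via the multiplier $(1-\Delta)^{n/2}$, the stochastic-convolution representation, and the combination of the BDG inequality with Lemma~\ref{Harmonic result 1} for $a=I$ are exactly the paper's ingredients (existence and uniqueness the paper simply takes from Theorem 4.10 of \cite{Krylov_99}). The gap is in your passage from the heat semigroup to the time-inhomogeneous semigroup $P_{s,t}$. Pointwise comparability of the symbols $i\xi e^{-A(s,t)\xi\cdot\xi}$ and $i\xi e^{-c(t-s)|\xi|^2}$ does not transfer an $L^p(L^2_t)$ square-function estimate for $p>2$: there is no multiplier principle that upgrades a pointwise sandwich of symbols to comparability of such mixed norms, so the claim that the argument behind Lemma~\ref{Harmonic result 1} ``therefore adapts'' is unsupported --- and you acknowledge as much. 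Your fallback of invoking the general theory of \cite{Kryl96,Krylov_99} wholesale would amount to citing the proposition rather than proving it; the paper does prove the estimate \eqref{SPDE esti harmonic1}, and by a different device.

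The missing idea is Krylov's auxiliary-Brownian-motion reduction, which removes the need for any symbol comparison. After a deterministic time change one may assume $a(t)\geq I$; let $\sigma(t)=\sigma^*(t)\geq 0$ solve the matrix equation $\sigma^2(t)+2I=2a(t)$, and let $B$ be a $d$-dimensional Wiener process independent of $\{\sF_t\}$. Solve the Laplacian (model) equation for $\zeta$ with the shifted forcing $g^l(t,x-\int_0^t\sigma(s)\,dB_s)$; by translation invariance of all the norms the model-case bound gives $\|\zeta_x\|_{\bH_p^0}\leq C(d,p)\|g\|_{\bH_p^0}$. The It\^o--Wentzell formula shows that $Y(t,x):=\zeta(t,x+\int_0^t\sigma(s)\,dB_s)$ solves the equation with generator $a^{ij}(t)\partial_{x^i}\partial_{x^j}$ plus an extra martingale term in $B$, whence $\eta(t,\cdot)=E[Y(t,\cdot)\mid\sF_t]$; conditional Jensen then yields $\|\eta_x(t,\cdot)\|^p_{H_p^0}\leq E[\|\zeta_x(t,\cdot)\|^p_{H_p^0}\mid\sF_t]$ and hence \eqref{SPDE esti harmonic1} with the stated dependence on $\lambda_1,\Lambda_1$ and no dependence on $T$. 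This is precisely the step your proposal identifies as the obstacle but does not supply.
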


\begin{proof}
  In view of  \cite[Theorem 4.10]{Krylov_99}, SPDE \eqref{spde harmonic1}
  has a unique solution.
  It remains to prove the estimate \eqref{SPDE esti harmonic1}.
    It is sufficient to prove the estimate for $n=0,$ and other cases can
    be proved by induction.

We follow a standard procedure which is due to Krylov (for instance, see
\cite[Theorem 4.10 pp. 205-206]{Krylov_99}).


  First, for the model case $a:=(a_{ij})_{1\leq i,j \leq d}=I$,
   it can be checked that
  $$\eta(t,x)=
  \int_0^t T_{t-r}g^l (r,x)\, dW^l_r \quad a.s.,
  $$
and thus,
      $$\eta_x(t,x)=
      \int_0^t \nabla T_{t-r}g^l (r,x)\, dW^l_r \quad a.s.,
  $$
where $T_t:=e^{\Delta t},~t\geq 0,$ is the semigroup corresponding to the
heat equation $\frac{\partial u}{\partial t}=\Delta u $ in $\bR^d.$ From
 Lemma \ref{Harmonic result 1}, we get
\begin{eqnarray*}
  \begin{split}
    \|\eta_x\|_{\bH_p^{0}}&=E  \int_{\bR^d} \int_0^T
    \left|\int_0^t \nabla T_{t-r}g^l (s,x)dW^l_s\right|^pdtdx    \\
    &\leq C(p)E  \int_{\bR^d} \int_0^T \big[
        \int_0^t |\nabla T_{t-s}g(s,\cdot)(x)|^2ds
            \big]^{p/2}dtdx       \\
            & \leq
            C(d,p)\|g\|_{\bH_p^0 }.
  \end{split}
\end{eqnarray*}
 For the general case, we can take $a \geq I,$ otherwise
  we take a nonrandom time change. Take $\sigma (t)=\sigma^*(t) \geq 0$
 as a solution of the matrix equation $\sigma^2(t)+2I=2a.$ Furthermore, we also
  assume that there is a d-dimensional Wiener
 process $(B_t)_{t\geq 0}$ independent of $(\sF_t)_{0\leq T}.$

Then, like the model case, the equation
$$d\zeta(t,x)=\Delta \zeta(t,x) dt +g^l(t,x-\int_0^t\sigma(s)\, dB_s)\, dW^l_t,$$
with the zero initial condition has a unique solution $\zeta\in \bH_p^0$
satisfying \eqref{1003_7.4} and \eqref{SPDE esti harmonic1}. Note that the
predictable $\sigma$-algebra $\sP$ is replaced by $\sigma$-algebra
generated by $\sF_t\vee \sigma(B_s;s\leq t)$ here.
  In
particular, as our norms are all translation invariant with respect to the
space variable, we have
$$ \|\zeta_x\|_{\bH_p^{0}}\leq C(d,p)\|g\|_{\bH_p^0 }.$$
The application of It\^o-Wentzell formula (c.f \cite{Krylov_09})
shows that the field $Y(t,x):=\zeta(t,x+\int_0^t\sigma(s)dB_s),
(t,x)\in [0,T]\times \bR^d$ solves the SPDE
$$ dY(t,x)=a^{ij}(t)Y_{x^i x^j}(t,x)\,dt+ g^l(t,x)\,dW^l_t
            +Y_{x^i}(t,x)\sigma^{ij}(t) \, dB^j_t, \quad Y(0,x)=0.$$
For any $\phi \in C^{\infty}_{c}$ and $t\geq 0,$
$$ (\eta(t,\cdot),\phi)=E[(Y(t,\cdot),\phi)|\sF_t]=
        E\big[ (\zeta(t,\cdot+\int_0^t\sigma(s)dB_s),\phi)|\sF_t  \big] \quad a.s..
    $$
Therefore,
$$(\eta_x(t,\cdot),\phi)=E\big[ (\zeta_x(t,\cdot+\int_0^t\sigma(s)dB_s),\phi)|\sF_t
                     \big]    \quad a.s..
  $$
As $ C^{\infty}_{c}$ is separable and dense in $H_{p/(p-1)}^0$, it follows
that
 $$   \|\eta_x(t,\cdot)\|_{H_p^0}^p \leq E\big[ \|\zeta_x(t,\cdot)\|_{H_p^0}^p|
   \sF_t\big] \quad a.s..
    $$
Hence, $$\|\eta_x\|_{\bH_p^0}\leq \|\zeta_x\|_{\bH_p^0}
           \leq C(d,p)\|g\|_{\bH_p^0 } .$$
By considering the possible nonrandom time change, we get \eqref{SPDE esti
harmonic1} for $n=0.$
\end{proof}

%
%

\begin{proof}[Proof of Theorem \ref{thm independent space}] Without
loss of generality, assume that $m=1$.
\emph{Step 1}. We use the duality method and Proposition~\ref{SPDE
esti harmonic1} to prove assertion (i).

Consider the following SPDE:
\begin{equation}\label{spde of the thm indep_varialble}
  \left\{\begin{array}{l}
    \begin{split}
      d \eta (t,x)=&[
              a^{ij}(t)\eta_{x^i x^j}(t,x)
                  +f(t,x)]dt\\
                 &~~~~+[-\sigma^{i}(t)\eta_{x^i}(t,x)+g(t,x)]dW_t,\quad
           ~~~ (t,x)\in [0,T]\times \bR^{d},
    \end{split}\\
    \begin{split}
       \eta(0,x)=0,~~~~~~x\in \bR^{d},
    \end{split}
  \end{array}\right.
\end{equation}
where $(f,g)\in (\bH_{2}^{-n-2}\cap \bH_{p'}^{-n-2} )\times
                 (\bH_{2}^{-n-1}\cap  \bH_{ p'}^{-n-1}),$ and $1/p+1/p'=1.$

Then it follows form \cite[Theorem 4.10]{Krylov_99} that SPDE \eqref{spde
of the thm indep_varialble} has a unique solution $u\in \bH_q^{-n}$ which
satisfies
\begin{eqnarray} \label{BSPDE 1}
  \begin{split}
    \|\eta\|_{\bH_{q}^{-n}}&\leq
        C(T,d,q,\lambda,\Lambda)\left(\|f\|_{\bH_{q}^{-n-2}}
                +\|g\|_{\bH_{q}^{-n-1}}\right),\\
    \|\eta_{xx}\|_{\bH_{q}^{-n-2}} &\leq
        C(d,q,\lambda,\Lambda) \left(
                        \|f\|_{\bH_{q}^{-n-2}}
                            +\|g\|_{\bH_{q}^{-n-1}}   \right),\\
    E\sup_{t\in [0,T]}\|\eta(t,\cdot)\|_{H_q^{-n-1}}
     &\leq C(T,d,q,\lambda,\Lambda)
        \left(
                        \|f\|_{\bH_{q}^{-n-2}}
                            +\|g\|_{\bH_{q}^{-n-1}}   \right)
  \end{split}
\end{eqnarray}
where $q=2$ or $p'.$ For the moment, assume that
$$(F,G)\in (\bH_{p}^{n}\cap \bH_{2}^{n} )
\times (L^{p}(\Omega,\sF_{T},H_{p}^{n+1})\cap
             L^{2}(\Omega,\sF_{T},H_{2}^{n+1})).$$

For $p=2$  BSPDE \eqref{independent of space} has a unique pair $(u,v)\in
\bH_{2}^{n+2} \times \bH_2^{n+1}$ such that (see \cite{Zhou_92})
    $$ \|u\|_{\bH_{2}^{n+2}}+ \|v\|_{\bH_2^{n+1}} \leq
        C(T,d,\lambda,\Lambda) [\|F\|_{\bH_2^{n}}
            +\|G\|_{L^{2}(\Omega,\sF_{T},H_{2}^{n+1})}],$$
and for any $\phi \in C^{\infty}_{c}$ and $\tau\in [0,T]$
\begin{equation}\label{1003_3_25}
  \begin{split}
  (u(\tau,\cdot),\phi)=&(G,\phi)+\int_{\tau}^{T}( a^{ij}(t)u_{x^i x^j}(t,\cdot) +
       \sigma^{i}(t)v_{x^i}(t,\cdot)
          +F(t,\cdot),\phi)\, dt\\
  &\int_{\tau}^{T}(v(t,\cdot),\phi)\, dW_t, \quad
  a.s..
  \end{split}
\end{equation}
 Furthermore, keeping in mind the existence of $(u,v)\in \bH_{2}^{n+2}
\times \bH_2^{n+1},$
 we conclude that (at
least for a modification of $u$)  for any $\phi \in C^{\infty}_{c}$, the
equality \eqref{1003_3_25}
 holds for all $\tau\in[0,T]$ with probability 1.
 From Remark \ref{rmk modification solution}, we have $u\in\sH_2^{n+2}.$

 The parallelogram rule yields the following
\begin{eqnarray*}
  \begin{split}
    &\int_{\bR^{d}}(1-\Delta)^{\frac{n+1}{2}}u(t,x)
            (1-\Delta)^{-\frac{n+1}{2}}\eta(t,x)dx                           \\
    =\, &\frac{1}{4}\{\|(1-\Delta)^{\frac{n+1}{2}}u(t,\cdot)
            +(1-\Delta)^{-\frac{n+1}{2}}\eta (t,\cdot)\|_{L^2(\bR^d)}^2
            +\\
    &\|(1-\Delta)^{\frac{n+1}{2}}u(t,\cdot)
            -(1-\Delta)^{-\frac{n+1}{2}}\eta (t,\cdot)\|_{L^2(\bR^d)}^2
                   \}.
  \end{split}
\end{eqnarray*}
Applying  It\^o's formula to compute the square of the norm (see
\cite[Theorem 3.1]{Krylov_Rozovskii81}), we get
\begin{eqnarray*}
    \begin{split}
    &  E \int_{0}^{T} (u(t,\cdot),f(t,\cdot))+(v(t,\cdot),g(t,\cdot))dt     \\
       =\, &(G,\eta(T,\cdot))+E\int_{0}^{T}(F(t,\cdot),\eta(t,\cdot))dt        \\
      \leq \, &\|G\|_{L^{p}(\Omega,\sF_{T},H_{p}^{n+1})}
                    \|\eta(T)\|_{L^{p'}(\Omega,\sF_{T},H_{p'}^{-n-1})}
                +\|F\|_{\bH^{n}_{p}}    \|\eta\|_{\bH_{p'}^{-n}}            \\
       \leq \, &(\|G\|_{L^{p}(\Omega,\sF_{T},H_{p}^{n+1})}
              +\|F\|_{\bH^{n}_{p}})
            (\|\eta(T)\|_{L^{p'}(\Omega,\sF_{T},H_{p'}^{-n-1})}
                +\|\eta\|_{\bH_{p'}^{-n}})                                  \\
      \leq \, &C(T,\lambda,\Lambda,d,p)
                    (\|G\|_{L^{p}(\Omega,\sF_{T},H_{p}^{n+1})}
                        +\|F\|_{\bH^{n}_{p}})
                    (\|f\|_{\bH_{p'}^{-n-2}}+\|g\|_{\bH_{p'}^{-n-1}}).      \\
    \end{split}
\end{eqnarray*}
Note that $(F,G)\in (\bH_{p}^{n}\cap \bH_{2}^{n} ) \times
(L^{p}(\Omega,\sF_{T},H_{p}^{n+1})\cap
             L^{2}(\Omega,\sF_{T},H_{2}^{n+1})).$

For  $(F,G)\in \bH_{p}^{n} \times L^{p}(\Omega,\sF_{T},H_{p}^{n+1}),$ we
choose a sequence $(F^k,G^k)\in (\bH_{p}^{n}\cap \bH_{2}^{n} ) \times
(L^{p}(\Omega,\sF_{T},H_{p}^{n+1})\cap
             L^{2}(\Omega,\sF_{T},H_{2}^{n+1})),~k=1,2,\dots,., $ such that
\begin{eqnarray}\label{proof of thm indep space sequence}
  \begin{split}
\|F^k-F\|_{\bH_{p}^{n}}+\|G^k-G\|_{L^{p}(\Omega,\sF_{T},H_{p}^{n+1})}
\rightarrow 0 ~as ~ k \rightarrow \infty.
  \end{split}
\end{eqnarray}
 Denote by $(u_k,v_k)$ the unique
solution pair to BSPDE \eqref{independent of space} for $(F,G):=(F^k,G^k)$.
Thus,
\begin{eqnarray*}
    \begin{split}
    &  E \int_{0}^{T} (u^k(t,\cdot),f(t,\cdot))+(v^k(t,\cdot),g(t,\cdot))dt     \\
    &   =(G^k,\eta(T,\cdot))+E\int_{0}^{T}(F^k(t,\cdot),\eta(t,\cdot))dt        \\
    &   \leq C(T,d,p,\lambda,\Lambda)
                    (\|G^k\|_{L^{p}(\Omega,\sF^k_{T},H_{p}^{n+1})}
                        +\|F^k\|_{\bH^{n}_{p}})
                    (\|f\|_{\bH_{p'}^{-n-2}}+\|g\|_{\bH_{p'}^{-n-1}}).      \\
    \end{split}
\end{eqnarray*}
where $C(T,d,p,\lambda,\Lambda)$ is independent of k. Noting that
$\bH_{2}^{-n-2}\cap \bH_{p'}^{-n-2}$ and $\bH_{2}^{-n-1}\cap
\bH_{p'}^{-n-1}$ are dense in $\bH_{p'}^{-n-2}$ and $\bH_{p'}^{-n-1}$
respectively, and that $(f,g)\in(\bH_{2}^{-n-2}\cap \bH_{p'}^{-n-2})
\times(\bH_{2}^{-n-1}\cap  \bH_{p'}^{-n-1})$ is arbitrary, from the last
inequality, we have
\begin{eqnarray}\label{1003301}
  \begin{split}
\|u^k\|_{\bH_p^{n+2}}+\|v^k\|_{\bH_p^{n+1}} \leq
    C(T,d,p,\lambda,\Lambda)\left [\|F^k\|_{\bH_p^{n}}
    +\|G^k\|_{L^{p}(\Omega,\sF^k_{T},H_{p}^{n+1})}\right ].
  \end{split}
\end{eqnarray}
Moreover,
\begin{equation*}
    \begin{split}
      & \|u^k\|_{\sH^{n+2}_{p}}                                                     \\
            & =\|u^{k}_{xx}\|_{\bH_{p}^{n}}
            +\|a^{ij}u^{k}_{x^i x^j} +\sigma^{i}v^{k}_{x^i}
                    +F^{k}\|_{\bH_{p}^{n}}+\|v^{k}_x\|_{\bH_{p}^{n}}
                            +\|G^{k}\|_{L^{p}(\Omega,\sF_{T},B_{p,p}^{n+2-2/p})}        \\
      &\leq C(n,d,p,\lambda,\Lambda)
                [   \|u^{k}_{xx}\|_{\bH_{p}^{n}}+\|v^{k}\|_{\bH_{p}^{n+1}}
                    +\|F^{k}\|_{\bH_{p}^{n}}
                    +\|G^{k}\|_{L^{p}(\Omega,\sF_{T},H_{p}^{n+1})}  ]              \\
      &\leq C(n,d,p,\lambda,\Lambda)
                [  \|u^{k}\|_{\bH_{p}^{n+2}}   +\|v^{k}\|_{\bH_{p}^{n+1}}
                    +\|F^{k}\|_{\bH_{p}^{n}}
                    +\|G^{k}\|_{L^{p}(\Omega,\sF_{T},H_{p}^{n+1})}   ]             \\
      &\leq C(T,n,d,p,\lambda,\Lambda)
                [\|F^{k}\|_{\bH_{p}^{n}}
                    +\|G^{k}\|_{L^{p}(\Omega,\sF_{T},H_{p}^{n+1})}   ]             \\
    \end{split}
\end{equation*}
and this combined with  $u^k\in \sH_2^{n+2},$ implies $u^k\in \sH_p^{n+2}$
for $k=1,2,3,\dots.$

From \eqref{proof of thm indep space sequence}, \eqref{1003301} and
the last inequality, it follows that $u^k$ is a Cauchy sequence in
$\sH_p^{n+2}.$ By Theorem \ref{stochastic banach property thm},
there exists $u \in \sH_p^{n+2}$
 such that $\|u^k-u\|_{\sH_p^{n+2}}\rightarrow 0,~as~ k \rightarrow \infty,$
 and there holds the following estimate
$$\|u\|_{\sH_{p}^{n+2}}\leq C(T,n,p,d,\lambda,\Lambda)
    [
      \|G\|_{L^{p}(\Omega,\sF_{T},H_{p}^{n+1})}+
          \|F\|_{\bH_{p}^{n}}
    ].$$
 Denote $v:=\bD u.$ It is obvious that
$\|v^k-v\|_{\bH_p^{n+1}}\rightarrow 0,~as~ k \rightarrow \infty.$ In view
of Remark \ref{rmk 2}, one can check that $v\in\bH_p^{n+1}\cap
\bH_{p,2}^{n}.$ By taking limits one can check that $u \in \sH_p^{n+2}$ is
a solution of BSPDE \eqref{independent of space}.

Now we prove the uniqueness of the solution. Suppose that $F=0,$
$G=0$ and $u\in\sH_p^{n+2}$ solving BSPDE \eqref{independent of
space}. It is sufficient to show $u=0$, which is immediate from the
last estimate with $F=0$ and $G=0$.



\emph{Step 2}. We prove assertion (ii).

Note that $L^{p}(\Omega,\sF_{T},H_{p}^{n+1}) $ is continuously embedded
into $ L^{p}(\Omega,\sF_{T},H_{p}^{n}).$ From Lemma \ref{lem BSDE}, it
follows that $u\in \bH_{p,\infty}^{n} ,v\in \bH_{p,2}^{n},$ and $u\in
C([0,T],H_p^n)$ almost surely. In fact, in view of Lemma \ref{lem BSDE} and
 Theorem \ref{stochastic banach property thm}, we have
\begin{equation*}
    \begin{split}
      & \|u\|_{\bH^{n}_{p,\infty}}+\|v\|_{\bH_{p,2}^{n}}                    \\
             \leq\, & C(T,p)\left(
            \|a^{ij}u_{x^i x^j} +\sigma^{ik}v^{k}_{x^i}
                    +F\|_{\bH_{p}^{n}}
                            +\|G\|_{L^{p}(\Omega,\sF_{T},H_{p}^{n})}\right)       \\
      \leq \, & C(T,p,\lambda,\Lambda)
                \left(   \|u_{xx}\|_{\bH_{p}^{n}}+\|v_x\|_{\bH_{p}^{n}}
                    +\|F\|_{\bH_{p}^{n}}
                    +\|G\|_{L^{p}(\Omega,\sF_{T},H_{p}^{n})}  \right)              \\
      \leq \, &C(T,p,\lambda,\Lambda)
                \left(  \|u\|_{\sH_{p}^{n+2}}
                    +\|F\|_{\bH_{p}^{n}}
                    +\|G\|_{L^{p}(\Omega,\sF_{T},H_{p}^{n})}   \right)             \\
      \leq \, & C(T,n,d,p,\lambda,\Lambda)
                \left(\|F\|_{\bH_{p}^{n}}
                    +\|G\|_{L^{p}(\Omega,\sF_{T},H_{p}^{n+1})}  \right).             \\
    \end{split}
\end{equation*}

\emph{Step 3}. We prove assertion (iii) using the duality method.

Consider $G=0$. For $(f,g)\in (\bH_{p'}^{-n}\cap\bH_{2}^{-n})
                \times (\bH_{p'}^{-n+1}\cap\bH_{2}^{-n+1})$, the
                Hessian $\eta_{xx}$ of the corresponding solution solves SPDE~\eqref{spde of the thm
indep_varialble} with $(f,g)$ being replaced with $(f_{xx},g_{xx})$.
For the SPDE with $(f,g)$, we have the following analogue to
~\eqref{BSPDE 1}:
\begin{equation*}
    \begin{split}
        \|\eta_{xx}\|_{\bH_{p'}^{-n}}\
        &\leq\  C(d,p,\lambda,\Lambda) \left(
                   \|f\|_{\bH_{p'}^{-n}}
                            +\|g\|_{\bH_{p'}^{-n+1}}   \right).
    \end{split}
\end{equation*}
Furthermore, proceeding identically as in the proof of assertion
(i), we have
\begin{eqnarray*}
    \begin{split}
    &E \int_{0}^{T}
    (u_{x^ix^j}(t,\cdot),f(t,\cdot))+(v_{x^ix^j}(t,\cdot),g(t,\cdot))dt\\
    =&E \int_{0}^{T} (u(t,\cdot),f_{x^ix^j}(t,\cdot))+(v(t,\cdot),g_{x^ix^j}(t,\cdot))dt   \\
       =&E\int_{0}^{T}(F(t,\cdot),\eta_{x^ix^j}(t,\cdot))dt                            \\
       \leq& \|F\|_{\bH^{n}_{p}}    \|\eta_{xx}\|_{\bH_{p'}^{-n}}                  \\
       \leq& C(\lambda,\Lambda,d,p)
                 \|F\|_{\bH^{n}_{p}} (
                    \|f\|_{\bH_{p'}^{-n}}+\|g\|_{\bH_{p'}^{-n+1}}),~\textrm{ for }i,j=1,\dots,d.                \\
    \end{split}
\end{eqnarray*}
Hence, by the arbitrariness of $(f,g)$ and the denseness of
$(\bH_{p'}^{-n}\cap\bH_{2}^{-n})
                \times (\bH_{p'}^{-n+1}\cap\bH_{2}^{-n+1})$ in $\bH_{p'}^{-n} \times
                \bH_{p'}^{-n+1}$
it follows that
\begin{eqnarray}\label{1}
  \begin{split}
    \|u_{xx}\|_{\bH_{p}^{n}}+\|v_{xx}\|_{\bH_{p}^{n-1}} \leq
                        C(d,p,\lambda,\Lambda)~\|F\|_{\bH_{p}^{n}}.
  \end{split}
\end{eqnarray}

On the other hand, let $\zeta(t,x):=u(t,x+\int_0^t \sigma(s) dW_s)$.
Applying the It\^o-Wentzell formula (cf. \cite{Krylov_09}), we have
\begin{equation}
  \left\{\begin{array}{l}
    \begin{split}
      -d \zeta (t,x)=&\big[
                    (a^{ij}(t)-\alpha^{ij}(t))\zeta(t,x)_{x^i x^j}
                       +F(t,x+\int_0^t \sigma(s) dW_s) \big]dt\\
      &-[\sigma^i \zeta_{x^i}(t,x)+ v(t,x+\int_0^t \sigma(s) dW_s)]
      dW_t,\quad (t,x)\in [0,T]\times \bR^{d};
    \end{split}\\
    \begin{split}
    \zeta(T,x)=0, \quad x\in \bR^{d}.
    \end{split}
  \end{array}\right.
\end{equation}
We consider the dual SPDE
\begin{equation}\label{1003302}
  \left\{\begin{array}{l}
    \begin{split}
      d \psi (t,x)=&
              (a^{ij}(t)-\alpha^{ij}(t))\psi_{x^i x^j}(t,x)\,
                  dt+h(t,x)\,dW_t,\quad (t,x)\in [0,T]\times
                  \bR^{d};
    \end{split}\\
    \begin{split}
       \psi(0,x)=0,\quad x\in \bR^{d}
    \end{split}
  \end{array}\right.
\end{equation}
where $h\in\bH^{-n}_{p'} \cap \bH^{-n}_{2},$ $1/p'+1/p=1.$ In view of
Proposition \ref{lem 1 independent of space variable}, we conclude that
SPDE \eqref{1003302} has a unique solution $\psi \in \bH_{p'}^{-n+1}$
satisfying
$$\|\psi_x\|_{\bH_{p'}^{-n}} \leq C(d,p,\lambda,\Lambda)
\|h\|_{\bH^{-n}_{p'}}.$$ Moreover, we have
\begin{eqnarray}
  \begin{split}
    &E\int_0^T(\sigma^i \zeta_{x^ix^j}(t,\cdot)+ v_{x^j}(t,\cdot+\int_0^t \sigma(s)\, dW_s),
                h(t,\cdot)) \,dt                                                  \\
    &=E\int_0^T(\sigma^i \zeta_{x^i}(t,\cdot)+ v(t,\cdot+\int_0^t \sigma(s)\, dW_s),
                h_{x^j}(t,\cdot))\, dt                                           \\
    &=E\int_0^T(\psi_{x^j}(t,\cdot),F(t,\cdot+\int_0^t \sigma(s)\, dW_s))\, dt     \\
    &\leq \|\psi_{x}\|_{\bH_{p'}^{-n}}
        \|F(\cdot,\cdot+\int_0^t \sigma(s) \, dW_s(\cdot))\|_{\bH_p^n}           \\
    &\leq C(d,p,\lambda,\Lambda)
            \|h\|_{\bH_{p'}^{-n}} \|F\|_{\bH_p^n} \quad for~ j=1,\dots,d.
  \end{split}
\end{eqnarray}
Since $h$ is arbitrary and $\bH^{-n}_{p'} \cap \bH^{-n}_{2}$ is dense in
 $\bH^{-n}_{p'},$
we have
$$\|\sigma^i\zeta_{x^ix}(\cdot,\cdot)+ v_{x}(\cdot,\cdot+\int_0^t
\sigma(s)\,  dW_s)  \|_{\bH_p^n} \leq C(d,p,\lambda,\Lambda)
\|F\|_{\bH_p^n},$$
 which yields
$$\|\sigma^i u_{x^ix}+ v_{x} \|_{\bH_p^n}
 \leq C(d,p,\lambda,\Lambda) \|F\|_{\bH_p^n}.$$
Therefore,
\begin{eqnarray}\label{2}
  \begin{split}
    \|v_x\|_{\bH_p^n}
    &\leq \|\sigma^i u_{x^ix}\|_{\bH_p^n}+\|\sigma^i u_{x^ix}+ v_{x}
                    \|_{\bH_p^n}
    \leq C(d,p,\lambda,\Lambda)\|F\|_{\bH_p^n},
  \end{split}
\end{eqnarray}
which, combined with \eqref{1}, implies the assertion (iii).

%
%

  The proof is complete.
\end{proof}

\begin{rmk}\label{rmk p q}
    If the assumptions of Theorem \ref{thm independent space} are
     satisfied for  both $q_1$ and $q_2$ instead of $p,$ where $q_1,q_2\in (1,2],$
    then the solutions in $\sH_{q_1}^{n+2}$ and $\sH_{q_2}^{n+2}$
    coincide. Indeed, we need only to take $(F^k,G^k)\in (\bH_{q_1}^{n}\cap \bH_{2}^{n}
    \cap \bH_{q_2}^n ) \times
(L^{q_1}(\Omega,\sF_{T},H_{q_1}^{n+1})\cap
             L^{2}(\Omega,\sF_{T},H_{2}^{n+1})
             \cap L^{q_2}(\Omega,\sF_{T},H_{q_2}^{n+1})) $
   during the proof of Theorem \ref{thm independent
   space}. Then the approximating solutions in $\sH_{q_1}^{n+2}$
 and $\sH_{q_2}^{n+2}$ coincide in $\sH_{2}^{n+2}.$ This implies
the solutions to  \eqref{independent of space} in $\sH_{q_1}^{n+2}$
 and $\sH_{q_2}^{n+2}$ coincide.
\end{rmk}

\begin{rmk}\label{p>2 simple case}
For the case $p\in(2,\infty),$ consider the following BSPDE
\begin{equation}\label{p>2 simple bspde}
  \left\{\begin{array}{l}
    \begin{split}
      -d u (t,x)=&\big[
                    a^{ij}(t)u_{x^i x^j}(t,x) +\sigma^{ik}(t)v_{x^i}(t,x)+F(t,x)
                    \big]dt\\
                    &-v^k(t,x)dW^k_t,\quad~~(t,x)\in [0,T]\times \bR^{d},
    \end{split}\\
    \begin{split}
    u(T,x)=G(x),\quad~~~~x\in \bR^{d},
    \end{split}
  \end{array}\right.
\end{equation}
and  SPDE:
\begin{equation}\label{p>2 simple spde}
  \left\{\begin{array}{l}
    \begin{split}
      d \eta (t,x)=&[
              a^{ij}(t)\eta_{x^i x^j}(t,x)
                  +f(t,x)]dt      \\
                  &-\sigma^{ik}(t)\eta_{x^i}(t,x)dW^k_t
                    \quad~ (t,x)\in [0,T]\times \bR^{d},
    \end{split}\\
    \begin{split}
       \eta(0,x)=0,~~~~~~x\in \bR^{d},
    \end{split}
  \end{array}\right.
\end{equation}
where $f\in \bH_{p'}^{-n-2} ,$ $(F,G)\in \bH_{p}^{n} \times
                L^{p}(\Omega,\sF_{T},H_p^{n+1}),$ $1/p+1/p'=1,$
                $n\in\bR$ and $(a^{ij})_{1\leq i,j\leq d}$ and $(\sigma^{ik})_{1\leq i\leq d,
                1\leq k \leq m}$ are the same as
                Theorem \ref{thm independent space}.
If $a= I$ and $\sigma=0,$  one can check that
$\eta(t,x)=\int_0^te^{\Delta (t-s)}f(s,x)ds \in\bH_{p'}^{-n}$ is the
unique solution of \eqref{p>2 simple spde} in the sense of
\cite{Kryl96,Krylov_99}. For the general $a$ and $\sigma,$ by
applying the the It\^o-Wentzell formula and the technical method
used in Proposition \ref{lem 1 independent of space variable}, we
can conclude that \eqref{p>2 simple spde} has a unique solution
$\eta\in\bH_{p'}^{-n}$. It is crucial that $\sigma$ is invariant in
the space variable.

  Then through a
 procedure similar to the proof of Theorem \ref{thm independent space},
  we can conclude that BSPDE \eqref{p>2 simple bspde} has a unique solution pair
$(u,v)$ such that $u\in\bH_p^{n+2}\cap\bH_{p,\infty}^n,$ $
v(\cdot,\cdot+\int_0^{\cdot}\sigma^k(s)dW_s^k)\in\bH_{p,2}^n$
 and  for any $\phi \in C^{\infty}_{c},$ the equality
\begin{equation*}
  \begin{split}
  &(u(\tau,\cdot),\phi)=(G,\phi)+\int_{\tau}^{T}( a^{ij}(t)u_{x^i x^j}(t,\cdot) +
       \sigma^{ik}(t)v^{k}_{x^i}(t,\cdot)
          +F(t,\cdot),\phi)\,dt
  -\int_{\tau}^{T}(v^k(t,\cdot),\phi)\,dW^k_t,
  \end{split}
\end{equation*}
 holds for all $\tau\in[0,T]$ with probability 1.
For this solution pair, we have $u\in C([0,T],H_{p}^{n})$ almost surely and
\begin{eqnarray*}
  \begin{split}
    \|u\|_{\bH_{p}^{n+2}}+\|u\|_{\bH_{p,\infty}^{n}}+\|v'\|_{\bH_{p,2}^{n}}
        \leq C(T,n,d,p,\lambda,\Lambda)
    \left(
      \|G\|_{L^{p}(\Omega,\sF_{T},H_p^{n+1})}+
          \|F\|_{\bH_{p}^{n}}
    \right)
  \end{split}
\end{eqnarray*}
where  $v'=v(\cdot,\cdot+\int_0^{\cdot}\sigma^k(s)dW_s^k).$  In particular,
when $G=0,$ we have $    \|u\|_{\bH_{p}^{n+2}} \leq
                            C(d,p,\lambda,\Lambda)~\|F\|_{\bH_{p}^{n}}.$
\end{rmk}

\subsection{The case of general variable leading coefficients}
Now we deal with the general case.
\begin{thm}\label{thm general case}
Suppose that the assumptions \ref{ass1}-\ref{ass4} are all satisfied.
Consider $ G \in L^{p}(\Omega,\sF_{T},H_{p}^{n+1})$ with $p\in(1,2]$
 and $n\in\bR.$ Then BSPDE \eqref{BSPDE} has a unique
solution $u\in \sH_p^{n+2},$ satisfying the following inequality
\begin{eqnarray}
  \begin{split}
    \|u\|_{\sH_p^{n+2}}\leq C(T,n,\kappa,\varrho,d,p,\lambda,\Lambda)
        \left ( \|F(0,0,\cdot,\cdot)\|_{\bH_p^n }+\|G\|_{L^{p}(\Omega,\sF_{T},H_{p}^{n+1})}
        \right ).
  \end{split}
\end{eqnarray}
\end{thm}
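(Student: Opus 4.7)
The argument proceeds in two stages: first solve the linear BSPDE with space-dependent leading coefficients, then handle the semilinear term by a contraction mapping. For Stage~1, I would show that for any $f\in\bH_p^n$ and $G\in L^p(\Omega,\sF_T,H_p^{n+1})$, the linear BSPDE
\[
-du = \bigl[a^{ij}(t,x)u_{x^ix^j} + \sigma^{ik}(t,x)(\bD u)^k_{x^i} + f\bigr]\,dt - (\bD u)^l\,dW^l_t,\quad u(T)=G,
\]
has a unique solution $u\in\sH_p^{n+2}$ satisfying $\|u\|_{\sH_p^{n+2}}\le C(\|f\|_{\bH_p^n}+\|G\|_{L^p(\Omega,\sF_T,H_p^{n+1})})$. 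The method is continuity in $\tau\in[0,1]$ between the reference Laplacian with $\sigma=0$ (handled by Theorem \ref{thm independent space}) and the given operator; only a uniform a priori estimate must be verified. For that, I fix a smooth partition of unity $\{\zeta_k\}$ of $\bR^d$ with supports of diameter $\delta>0$ to be chosen; each function $u\zeta_k$ satisfies a BSPDE with frozen constant-in-space coefficients $a^{ij}(t,x_k),\sigma^{ik}(t,x_k)$ and source equal to $f\zeta_k$, plus commutators $[\cL,\zeta_k]u,\,[\cM^r,\zeta_k](\bD u)^r$ (lower-order in $u$ and $\bD u$), plus a perturbation $(a^{ij}(t,x)-a^{ij}(t,x_k))u_{x^ix^j}\zeta_k$ and its $\sigma$-analogue, bounded by $\kappa(\delta)\|u\|_{\sH_p^{n+2}}$ via Assumption \ref{ass2}. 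Applying Theorem \ref{thm independent space}(iii) to each $u\zeta_k$, summing $p$th powers over $k$ using finite overlap, and taking $\delta$ small enough absorbs the $\kappa(\delta)$-term into the left-hand side and delivers the a priori bound.

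\emph{Stage 2 (semilinear contraction).} With Stage 1 in hand, define $\Phi:\sH_p^{n+2}\to\sH_p^{n+2}$ by $\Phi(w)=u$, where $u$ solves the linear BSPDE with source $F(w,\bD w,\cdot,\cdot)$ and terminal $G$. By Assumption \ref{ass4}, $F(w,\bD w,\cdot,\cdot)\in\bH_p^n$ whenever $w\in\sH_p^{n+2}$, so $\Phi$ is well-defined. The difference $\Phi(w_1)-\Phi(w_2)$ solves the linear BSPDE with zero terminal and source $F(w_1,\bD w_1,\cdot,\cdot)-F(w_2,\bD w_2,\cdot,\cdot)$, so the Stage-1 estimate together with Assumption \ref{ass4}, choosing $\eps$ so that $C_0\eps\le 1/2$, yields
\[
\|\Phi(w_1)-\Phi(w_2)\|_{\sH_p^{n+2}}\le \tfrac12\|w_1-w_2\|_{\sH_p^{n+2}} + C_0\varrho(\eps)\bigl(\|w_1-w_2\|_{\bH_p^n}+\|\bD(w_1-w_2)\|_{\bH_p^n}\bigr).
\]
On a short backward time strip $[T-h,T]$, Remark \ref{rmk sotchastic banach thm} and Lemma \ref{BSDE lem2} bound the lower-order norms on the right by $C(h,p)\|w_1-w_2\|_{\sH_p^{n+2}([T-h,T])}$ with $C(h,p)\to 0$ as $h\to 0$, so $\Phi$ becomes a contraction on $\sH_p^{n+2}([T-h,T])$ for $h$ small. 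A finite backward patching over $[0,T]$, with each subinterval's $\sH_p^{n+2}$-solution serving as the terminal datum of the previous one, yields the unique global solution; summing the local estimates gives the claimed bound.

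\emph{Main obstacle.} The technical core is Stage~1, the partition-of-unity a priori estimate. The delicate point is controlling the commutators $[\cL,\zeta_k]u$ and $[\cM^r,\zeta_k](\bD u)^r$ in the $\sH_p^{n+2}$-norm by interpolation while simultaneously absorbing the $\kappa(\delta)$-perturbation; the $T$-uniformity in assertion (iii) of Theorem \ref{thm independent space} is crucial to prevent the constants from blowing up as $\delta\to 0$.
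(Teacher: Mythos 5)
Your overall architecture coincides with the paper's: reduce to the constant-coefficient case of Theorem \ref{thm independent space} by freezing coefficients on a partition of unity whose mesh is tied to $\kappa$ via Assumption \ref{ass2}, run the method of continuity on the leading operator, and treat the nonlinearity by a fixed-point argument. The organizational difference is that the paper folds the semilinear term into the continuity family (the parameter $\theta$ interpolates both the operators and $(1-\theta)F$ in Theorem \ref{thm perturbation}) and removes the lower-order terms by a Gronwall inequality in $t$ applied to $\|u\|_{\sH_p^{n+2}(t)}^p$, whereas you solve the linear problem first and then contract on short backward time strips with patching; both routes are viable and yours is arguably more modular.

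Two points in your Stage 1 are under-addressed, and they are exactly where the paper has to work. First, the first-order commutator $[\cM^r,\zeta_k](\bD u)^r=-(\zeta_k)_{x^i}\sigma^{ir}(\bD u)^r$ puts $\|\bD u\|_{\bH_p^n}$ on the right-hand side of the a priori estimate, and $\bD u$ is the martingale part of $u$, not a derivative of it, so it cannot be interpolated away as a ``lower-order term in $u$''. This is the step the paper explicitly singles out as distinguishing the BSPDE theory from Krylov's SPDE theory; it is resolved by Lemma \ref{lem BSDE diffusion term estimate} (built on Lemma \ref{BSDE lem2}), which trades $\|\bD u\|_{\bH_{p,2}^n(t)}$ for $\eps$ times the top-order norms plus $C(\eps)\|u\|_{\bH_p^n(t)}$, after which Gronwall closes the estimate. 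You invoke this tool only in Stage 2; it is needed already in the Stage-1 a priori bound. Second, your claim that $(a^{ij}(t,x)-a^{ij}(t,x_k))u_{x^ix^j}\zeta_k$ is bounded by $\kappa(\delta)\|u\|_{\sH_p^{n+2}}$ is immediate only for $n=0$; for general $n\in\bR$ the relevant multiplier norm is $B^{|n|+\gamma}$ (Lemma \ref{lem multiplier}), whose H\"older part is not small on small balls. The paper circumvents this with the interpolated operator $\eta_l\cL+(1-\eta_l)\cL(t,x_l)$ and the perturbation condition \eqref{1003_3} of Theorem \ref{thm perturbation}, which permits an additional non-small constant $K_0$ in front of the genuinely lower-order norms. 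Neither issue invalidates your plan, but both must be supplied for the argument to close.
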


The following lemma can be found in \cite[Lemma 5.2]{Krylov_99}.
\begin{lem}\label{lem multiplier}
Let $\zeta \in C^{\infty}_{c}(\bR^d)$ be a nonnegative function such that
$\int \zeta(x)dx=1$ and define $\zeta_k(x)=k^d\zeta(kx),k=1,2,3,\dots.$
Then for any $u\in H_p^n,$ $p\in(1,\infty),$ and any $n\in\bR,$  we have

(i) $\|au\|_{n,p}\leq C \|a\|_{B^{|n|+\gamma}} \|u\|_{n,p}$ where
$C=C(d,p,n,\gamma);$

(ii) $\|u\ast\zeta_k\|_{n,p}\leq \|u\|_{n,p},$ $\|u- u\ast \zeta_k
\|_{n,p}\rightarrow 0$ as $k\rightarrow \infty.$
\end{lem}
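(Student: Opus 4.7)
My plan is to dispatch (ii) first, since it is a direct consequence of the commutation of convolution with Bessel potentials, and then attack (i) by a classical three-step reduction: nonnegative integer index via Leibniz, negative integer index via duality, non-integer index via interpolation between adjacent integers.

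For (ii), the key observation is that $(1-\Delta)^{n/2}$ is a Fourier multiplier and therefore commutes with convolution, so
$$(1-\Delta)^{n/2}(u\ast \zeta_k)=\bigl((1-\Delta)^{n/2}u\bigr)\ast \zeta_k.$$
Setting $f:=(1-\Delta)^{n/2}u\in L^p(\bR^d)$, Young's inequality with $\|\zeta_k\|_1=1$ yields $\|u\ast\zeta_k\|_{n,p}=\|f\ast\zeta_k\|_p\le \|f\|_p=\|u\|_{n,p}$. The convergence $\|u-u\ast\zeta_k\|_{n,p}=\|f-f\ast\zeta_k\|_p\to 0$ is then the standard $L^p$-mollification statement, proved by density of $C_c^\infty$ in $L^p$ (available for $1\le p<\infty$) together with uniform convergence on continuous compactly supported representatives.

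For (i), I would first settle nonnegative integer $n$. Using the equivalence of the Bessel norm with $\sum_{|\alpha|\le n}\|D^\alpha\,\cdot\,\|_p$ on integer-order spaces, the Leibniz rule expands $D^\alpha(au)$ into a finite sum of products $D^\beta a\cdot D^{\alpha-\beta}u$; the assumption $a\in B^n$ (that is, $B(\bR^d)$ if $n=0$ and $C^{n-1,1}(\bR^d)$ if $n\ge 1$) gives $\|D^\beta a\|_\infty\le \|a\|_{B^n}$ for $|\beta|\le n$, so H\"older's inequality yields $\|au\|_{n,p}\le C\|a\|_{B^n}\|u\|_{n,p}$. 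For negative integer $n$, I would invoke the duality $(H_p^n)^*=H_{p'}^{-n}$ already recorded in Remark~\ref{rmk setion 1}:
$$\|au\|_{n,p}=\sup_{\|\phi\|_{-n,p'}\le 1}|(au,\phi)|=\sup_{\|\phi\|_{-n,p'}\le 1}|(u,a\phi)|\le \|u\|_{n,p}\sup_{\|\phi\|_{-n,p'}\le 1}\|a\phi\|_{-n,p'},$$
and the last supremum is controlled by the positive-integer case already established, since $-n\ge 0$ and $a\in B^{|n|}=B^{-n}$.

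For non-integer $n$, write $n=k+\theta$ with $k\in\mathbb{Z}$ and $\theta\in(0,1)$, pick $\gamma\in(0,1)$ so that $|n|+\gamma$ is non-integer, and interpolate between the two consecutive integer cases. Specifically, the operator $M_a\colon u\mapsto au$ is bounded on $H_p^k$ and on $H_p^{k+1}$ with norms controlled by $\|a\|_{B^{|k|+\gamma}}$ and $\|a\|_{B^{|k+1|+\gamma}}$ respectively, both dominated by $\|a\|_{B^{|n|+\gamma}}$; by the complex interpolation identity $[H_p^k,H_p^{k+1}]_\theta=H_p^n$ (standard for Bessel potential spaces), $M_a$ is bounded on $H_p^n$ with the interpolated norm bound. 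The main obstacle is precisely this interpolation step: one must make sure the Hölder-type norm $\|a\|_{B^{|n|+\gamma}}$ dominates the multiplier norm of $M_a$ on both endpoint integer spaces in a uniform way, which in turn uses $\gamma>0$ to ensure $a$ has enough regularity. Since this is the content of \cite[Lemma~5.2]{Krylov_99} cited in the statement, I would refer to Krylov's argument (based on paraproducts and the Littlewood--Paley characterization of Bessel/Besov spaces, cf.\ also \cite{Triebel_92}) to finish.
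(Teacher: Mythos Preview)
The paper does not prove this lemma at all: the sentence immediately preceding it states that it ``can be found in \cite[Lemma 5.2]{Krylov_99}'', and no argument is given. Your proposal therefore already goes further than the paper, and your treatment of (ii) and of (i) at integer indices (Leibniz for $n\ge 0$, duality for $n\le 0$) is correct and standard.

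Your interpolation step for non-integer $n$, however, has a genuine gap. Write $n=k+\theta$ with $k\in\mathbb Z$ and $\theta\in(0,1)$. To interpolate between $H_p^k$ and $H_p^{k+1}$ you need $M_a$ bounded on $H_p^{k+1}$, and by your own integer-case argument that requires $a\in B^{|k+1|}$. But the hypothesis only gives $a\in B^{|n|+\gamma}=C^{|n|+\gamma}$, and by the paper's convention $\gamma>0$ is chosen \emph{small} (just enough that $|n|+\gamma$ is non-integer). Thus for, say, $n\in(0,1)$ with $n+\gamma<1$, you have $a\in C^{n+\gamma}$ but not $a\in C^{0,1}=B^1$, so the upper endpoint bound is unavailable. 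Your remark that ``$\gamma>0$ ensures $a$ has enough regularity'' is exactly where the argument breaks: a small positive $\gamma$ does not lift you to the next integer. The proof in Krylov (and the corresponding pointwise-multiplier theorems in \cite{Triebel_92}) does not go through integer endpoints; it uses a direct paraproduct/Littlewood--Paley argument that only consumes the fractional H\"older regularity $|n|+\gamma$. Since you ultimately defer to Krylov for this step anyway, you end up in the same place as the paper---but the interpolation sketch as written is not salvageable and should be dropped rather than presented as the route.
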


Applying Lemma \ref{BSDE lem2}, we get a priori result about the
solution of BSPDE \eqref{BSPDE}, which is given in the following
lemma. It will play a key role in the proof of Theorem \ref{thm
general case} and distinguish our proof of BSPDEs  from that of
SPDEs in Krylov~\cite{Kryl96,Krylov_99}.
\begin{lem}\label{lem BSDE diffusion term estimate}
    Let $u\in\sH_{p,0}^{n+2}$ be a solution to BSPDE \eqref{BSPDE}. Let the
    assumptions \ref{ass1}-\ref{ass4} be satisfied. Then for any $\eps>0,$ there
    exists a constant $C=C(T,p,\eps)$ such that
\begin{eqnarray*}
  \begin{split}
    \|\bD u\|_{\bH_{p,2}^n(t)}   \leq
        &\eps
        [\|u_{xx}\|_{\bH_p^n(t)}+\|(\bD u)_x\|_{\bH_p^n(t)}
                +\|F(0,0,\cdot,\cdot)\|_{\bH_p^n(t)}]        \\
        &+C(T,p,\eps,\varrho,\Lambda)\|u\|_{\bH_p^n(t)},~t\in[0,T).
  \end{split}
\end{eqnarray*}
\end{lem}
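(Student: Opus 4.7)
The plan is to treat the BSPDE \eqref{BSPDE} (with zero terminal) as a Banach-space valued BSDE with effective driver
$$\tilde F(t,x):=a^{ij}(t,x)u_{x^ix^j}(t,x)+\sigma^{ik}(t,x)(\bD u)^{k}_{x^i}(t,x)+F(u,\bD u,t,x),$$
and apply Lemma~\ref{BSDE lem2} to the pair $(u,\bD u)$. Since $u\in\sH_{p,0}^{n+2}$, we have $u(T,\cdot)=0$ and $\tilde F\in\bH_p^n$, so Lemma~\ref{BSDE lem2} yields, for any $\eps'>0$,
\begin{equation*}
  \|\bD u\|_{\bH_{p,2}^{n}(t)}\le c(p,T,\eps')\|u\|_{\bH_p^{n}(t)}+\eps'\|\tilde F\|_{\bH_p^{n}(t)}.
\end{equation*}

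Next I would decompose $\|\tilde F\|_{\bH_p^n(t)}$ term by term. By Lemma~\ref{lem multiplier}(i) together with Assumption~\ref{ass3}, the leading terms satisfy
$$\|a^{ij}u_{x^ix^j}\|_{\bH_p^n(t)}+\|\sigma^{ik}(\bD u)^k_{x^i}\|_{\bH_p^n(t)}\le C(n,d,p,\Lambda)\bigl[\|u_{xx}\|_{\bH_p^n(t)}+\|(\bD u)_x\|_{\bH_p^n(t)}\bigr].$$
For the semilinear term, Assumption~\ref{ass4} applied at $(u_2,v_2)=(0,0)$ with a free parameter $\tilde\eps>0$ gives
$$\|F(u,\bD u,\cdot,\cdot)\|_{\bH_p^n(t)}\le\|F(0,0,\cdot,\cdot)\|_{\bH_p^n(t)}+\tilde\eps\bigl(\|u\|_{\bH_p^{n+2}(t)}+\|\bD u\|_{\bH_p^{n+1}(t)}\bigr)+\varrho(\tilde\eps)\bigl(\|u\|_{\bH_p^n(t)}+\|\bD u\|_{\bH_p^n(t)}\bigr),$$
and I would use the equivalences $\|u\|_{\bH_p^{n+2}(t)}\asymp \|u_{xx}\|_{\bH_p^n(t)}+\|u\|_{\bH_p^n(t)}$ and $\|\bD u\|_{\bH_p^{n+1}(t)}\asymp \|(\bD u)_x\|_{\bH_p^n(t)}+\|\bD u\|_{\bH_p^n(t)}$ coming from the isometry $(1-\Delta)^{1/2}$ of Remark~\ref{rmk setion 1}.

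The critical step is then to convert the $\bH_p^n$-norm of $\bD u$ into the $\bH_{p,2}^n$-norm so it can be absorbed. This is where the hypothesis $p\in(1,2]$ enters: applying H\"older's inequality in the time variable on each fiber yields
$$\|\bD u\|_{\bH_p^n(t)}\le T^{(2-p)/(2p)}\|\bD u\|_{\bH_{p,2}^n(t)}.$$
Putting everything together produces, schematically,
\begin{align*}
 \|\bD u\|_{\bH_{p,2}^{n}(t)}&\le\eps'\bigl[C(n,d,p,\Lambda)+\tilde\eps C\bigr]\bigl(\|u_{xx}\|_{\bH_p^n(t)}+\|(\bD u)_x\|_{\bH_p^n(t)}\bigr)\\
 &\quad+\eps'\|F(0,0,\cdot,\cdot)\|_{\bH_p^n(t)}+\eps'\bigl[\tilde\eps C+\varrho(\tilde\eps)\bigr]T^{(2-p)/(2p)}\|\bD u\|_{\bH_{p,2}^n(t)}\\
 &\quad+\bigl[c(p,T,\eps')+\eps'\tilde\eps C+\eps'\varrho(\tilde\eps)\bigr]\|u\|_{\bH_p^n(t)}.
\end{align*}

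Given $\eps>0$, I would first fix $\tilde\eps$ so small that $\tilde\eps C\le\Lambda$, and then choose $\eps'=\eps'(\eps,\tilde\eps,\Lambda,T,p)$ small enough to guarantee both (a) $\eps'[C(n,d,p,\Lambda)+\tilde\eps C]\le\eps$ and $\eps'\le\eps$, handling the three $\eps$-coefficient terms of the target estimate, and (b) $\eps'[\tilde\eps C+\varrho(\tilde\eps)]T^{(2-p)/(2p)}\le 1/2$, so the $\|\bD u\|_{\bH_{p,2}^n(t)}$ on the right can be absorbed into the left-hand side. The residual coefficient in front of $\|u\|_{\bH_p^n(t)}$ becomes $C(T,p,\eps,\varrho,\Lambda)$, which is exactly the dependence claimed. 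The main obstacle is the bookkeeping: the Lipschitz bound of Assumption~\ref{ass4} produces a $\|\bD u\|_{\bH_p^{n+1}(t)}$ term which forces us both to extract $\|(\bD u)_x\|_{\bH_p^n(t)}$ (giving an $\eps$-coefficient) and to convert a residual $\|\bD u\|_{\bH_p^n(t)}$ into $\|\bD u\|_{\bH_{p,2}^n(t)}$ (paid for by $T^{(2-p)/(2p)}$ and requiring $p\le 2$), and the constants $\eps',\tilde\eps$ must be chosen in the correct order so that both the absorption and the $\eps$-targets hold simultaneously.
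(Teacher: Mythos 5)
Your proposal is correct and follows essentially the same route as the paper: apply Lemma~\ref{BSDE lem2} to $(u,\bD u)$ with the full drift as the free term, split that drift using Lemma~\ref{lem multiplier} and Assumption~\ref{ass4}, convert $\|\bD u\|_{\bH_p^n(t)}$ into $\|\bD u\|_{\bH_{p,2}^n(t)}$ by H\"older in time (using $p\le 2$), and absorb. The only cosmetic difference is that the paper fixes the parameter in Assumption~\ref{ass4} to $1$ (yielding $\varrho(1)$) while you keep it free; both give the stated dependence of the constant.
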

\begin{proof}
 Denote $v:= \bD u.$
  By Lemma \ref{BSDE lem2},
  for any $\bar{\eps}>0,$ there exists a constant $C=C(T,p,\bar{\eps})$
such that
\begin{eqnarray*}
  \begin{split}
    \|v\|_{\bH_{p,2}^n}
     \leq & \bar{\eps}
        \|\cL u +\cM^k v^k +F(u,v,\cdot,\cdot)\|_{\bH_p^n}
            +C(T,p,\bar{\eps})\|u\|_{\bH_p^n}      \\
     \leq &\bar{\eps} C(\Lambda)(\|u_{xx}\|_{\bH_p^n}+\|v_x\|_{\bH_p^n}
                            +\|F(0,0,\cdot,\cdot)\|_{\bH_p^n})    \\
    &    +\bar{\eps} (\varrho (1)+1)(\|u\|_{\bH_p^n}+\|v\|_{\bH_p^n})
            +C(T,p,\bar{\eps})\|u\|_{\bH_p^n}.
  \end{split}
\end{eqnarray*}
Since
$$\|v\|_{\bH_p^n} \leq T^{(2-p)/2}\|v\|_{\bH_{p,2}^n}, $$
we choose $\bar{\eps}$ sufficiently small so that
$1-\bar{\eps}(\varrho(1)+1) T^{(2-p)/2}>1/2.$ Therefore,
\begin{eqnarray*}
  \begin{split}
    \|v\|_{\bH_{p,2}^n}
     \leq & 2\bar{\eps}C(\Lambda)
    [\|u_{xx}\|_{\bH_p^n}+\|v_x\|_{\bH_p^n}
                            +\|F(0,0,\cdot,\cdot)\|_{\bH_p^n}] \\
        &+2 \bar{\eps} (\varrho (1)+1)\|u\|_{\bH_p^n}
        +2 C(T,p,\bar{\eps})\|u\|_{\bH_p^n}            \\
    \leq & 2\bar{\eps}C(\Lambda)
    [\|u_{xx}\|_{\bH_p^n}+\|v_x\|_{\bH_p^n}
                            +\|F(0,0,\cdot,\cdot)\|_{\bH_p^n}]
        +C(T,p,\bar{\eps},\varrho(1))\|u\|_{\bH_p^n}.
  \end{split}
\end{eqnarray*}
This shows that the lemma is true for $t=0.$ Replacing $\bH_p^n$ with
$\bH_p^n(t),$ we can prove the lemma for any $t\in [0,T)$ similarly.
\end{proof}

 We have the following result about the perturbed leading coefficients.
\begin{thm}\label{thm perturbation}
 Let Assumptions \ref{ass1}-\ref{ass4} be satisfied. Then there exists a constant
  $\eps \in (0,1)$ depending only on $d,p,\lambda ~\textrm{and}~ \Lambda$
  such that if the inequality
\begin{eqnarray}\label{1003_3}
  \begin{split}
    &\|(a(t,\cdot)-\bar{a}(t))^{ij}(u_1)_{ij}\|_{n,p}+
       \|(\sigma(t,\cdot)-\bar{\sigma}(t,\cdot))^{ik}(v_1)^k_{i}\|_{n,p} \\
        \leq
    &\eps(  \|(u_1)_{xx}\|_{n,p}+\|(v_1)_{x}\|_{n,p}) \\
    &~ +
        K_0(\|u_1\|_{n,p}+\|v_1\|_{n,p}),~ \forall
        (u_{1},v_{1})\in H_{p}^{n+2}\times H_{p}^{n+1},t\geq 0,
  \end{split}
\end{eqnarray}
holds for some constant $K_0$ and some pair $(\bar{a},~\bar{\sigma})$ which
satisfies the assumptions in Theorem \ref{thm independent space}, there
exists a unique solution $u\in\sH_{p,0}^{n+2}$ to Equation \eqref{BSPDE}
with $G=0.$ Moreover, we have
\begin{eqnarray}\label{perturbation estimate}
  \begin{split}
    \|u\|_{\sH_{p}^{n+2}}
        \leq C(T,K_0,\varrho,d,p,\lambda,\Lambda)
            \|F(0,0,\cdot,\cdot)\|_{\bH_p^n}.
  \end{split}
\end{eqnarray}
In particular, $C$ is independent of $T$  if $K_0=0$ and
 $\varrho\equiv 0.$
\end{thm}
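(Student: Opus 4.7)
The plan is to first derive a closed a priori estimate for any solution $u\in\sH_{p,0}^{n+2}$ of \eqref{BSPDE}, then invoke the method of continuity (with an inner fixed-point step to absorb the semilinearity) to establish existence; uniqueness then drops out of the a priori estimate.

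For the a priori bound, set $v=\bD u$ and rewrite \eqref{BSPDE} as
$$-du = [\bar a^{ij}u_{x^ix^j} + \bar\sigma^{ik}v^k_{x^i} + \tilde F]\,dt - v^k\,dW^k_t,\quad u(T)=0,$$
where $\tilde F := (a-\bar a)^{ij}u_{x^ix^j} + (\sigma-\bar\sigma)^{ik}v^k_{x^i} + F(u,v,\cdot,\cdot)$. Theorem \ref{thm independent space}(iii), applied on $[t,T]$ with $G=0$, yields the $T$-independent estimate
$$\|u_{xx}\|_{\bH_p^n(t)} + \|v_x\|_{\bH_p^n(t)} \le C_0\|\tilde F\|_{\bH_p^n(t)},\quad C_0=C_0(d,p,\lambda,\Lambda).$$
Combining the perturbation hypothesis \eqref{1003_3} with Assumption \ref{ass4} (invoked with a small parameter $\tilde\eps$) gives
$$\|\tilde F\|_{\bH_p^n(t)} \le (\eps+C\tilde\eps)(\|u_{xx}\|_{\bH_p^n(t)}+\|v_x\|_{\bH_p^n(t)}) + K(\|u\|_{\bH_p^n(t)}+\|v\|_{\bH_p^n(t)}) + \|F(0,0,\cdot,\cdot)\|_{\bH_p^n(t)},$$
where $K$ depends on $K_0,\varrho(\tilde\eps),\Lambda$. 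Choosing $\eps,\tilde\eps$ with $C_0(\eps+C\tilde\eps)<1/2$ absorbs the Hessian terms on the left. Next, Lemma \ref{lem BSDE diffusion term estimate} (with its own small parameter) combined with $\|v\|_{\bH_p^n(t)}\le T^{(2-p)/2}\|v\|_{\bH_{p,2}^n(t)}$ (valid since $p\le 2$) controls $\|v\|_{\bH_p^n(t)}$ and yields
$$\|u\|_{\sH_p^{n+2}(t)} \le C_2\bigl[\|u\|_{\bH_p^n(t)} + \|F(0,0,\cdot,\cdot)\|_{\bH_p^n(t)}\bigr].$$
A backward Gronwall argument on $\phi(t):=E\sup_{s\in(t,T]}\|u(s,\cdot)\|_{n,p}^p$ — using Remark \ref{rmk sotchastic banach thm} to bound $\phi(t)\le C\|u\|_{\sH_p^{n+2}(t)}^p$ and $\|u\|_{\bH_p^n(t)}^p\le\int_t^T\phi(s)\,ds$ — closes the estimate to give \eqref{perturbation estimate}. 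In the special subcase $K_0=0$, $\varrho\equiv 0$ (so $F$ depends only on $(t,x)$), the lower-order $K$-term vanishes identically, so Lemma \ref{lem BSDE diffusion term estimate} and the Gronwall step are bypassed, producing the $T$-independent constant claimed.

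For existence I would use the method of continuity along $(a_\tau,\sigma_\tau):=(1-\tau)(\bar a,\bar\sigma)+\tau(a,\sigma)$ for $\tau\in[0,1]$; the bound \eqref{1003_3} descends to each $(a_\tau,\sigma_\tau)$ with the same $\eps,K_0$, so the a priori estimate above is uniform in $\tau$. Solvability at $\tau=0$ comes from Theorem \ref{thm independent space}, together with an inner contraction argument to handle the semilinear $F$ (freeze $(u_0,v_0)$, solve the linear BSPDE, and use the Lipschitz bound of Assumption \ref{ass4} with the same a priori tools to show that $(u_0,v_0)\mapsto(u,\bD u)$ is a contraction on $\sH_{p,0}^{n+2}$, possibly after introducing a weighted norm or iterating over short sub-intervals). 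Standard method-of-continuity reasoning then extends solvability to $\tau=1$. Uniqueness is immediate: for two solutions $u_1,u_2$, the difference satisfies a BSPDE whose nonlinear forcing is controlled by the Lipschitz bound of Assumption \ref{ass4}, and the a priori estimate forces it to vanish in $\sH_p^{n+2}$. The main obstacle is the bookkeeping in the a priori step — balancing the smallness parameters $\eps$ (from \eqref{1003_3}) and $\tilde\eps$ (from Assumption \ref{ass4}) against the $T$-independent constant $C_0$ of Theorem \ref{thm independent space}(iii), exploiting $p\le 2$ through $\|v\|_{\bH_p^n}\le T^{(2-p)/2}\|v\|_{\bH_{p,2}^n}$ when invoking Lemma \ref{lem BSDE diffusion term estimate}, and tracking which constants are $T$-independent so as to recover the final claim under $K_0=0$ and $\varrho\equiv 0$.
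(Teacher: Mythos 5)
Your proposal follows essentially the same route as the paper: the a priori estimate is obtained by freezing the leading coefficients at $(\bar a,\bar\sigma)$, applying Theorem \ref{thm independent space}(iii), absorbing the Hessian terms via \eqref{1003_3} and Assumption \ref{ass4}, controlling $\|v\|_{\bH_p^n(t)}$ through Lemma \ref{lem BSDE diffusion term estimate} together with $\|v\|_{\bH_p^n(t)}\le T^{(2-p)/2}\|v\|_{\bH_{p,2}^n(t)}$, and closing with Gronwall via Remark \ref{rmk sotchastic banach thm}, exactly as in the paper's Step 1; existence by the method of continuity with an inner contraction is the paper's Step 2. The only cosmetic difference is your homotopy interpolates the coefficients while keeping $F$ fixed, whereas the paper also scales $F$ by $(1-\theta)$ — both require the same fixed-point step, so nothing essential changes.
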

\begin{proof}
  \emph{Step 1.} We first prove that there is a generic constant
  $\eps\in(0,1)$ such that the inequality \eqref{1003_3} yields the
   estimate \eqref{perturbation estimate} for any solution $u\in \sH_{p,0}^{n+2}$ to
   BSPDE \eqref{BSPDE}.
  Denote $v:=\bD u$ and rewrite BSPDE \eqref{BSPDE} into the following form:
\begin{equation}
  \left\{\begin{array}{l}
    \begin{split}
      -d u (t,x)=&\big[
      \bar{\cL}u(t,x) +\bar{\cM}^kv^k(t,x)+(\cL -\bar{\cL})u(t,x)
      +(\cM-\bar{\cM})^k v^k(t,x)           \\
      &+F(u,v,t,x)\big]dt
      -v^k(t,x)dW^k_t,\quad~~~ (t,x)\in [0,T]\times \bR^{d};
    \end{split}\\
    \begin{split}
    u(T,x)=0,~~~~~~x\in \bR^{d}
    \end{split}
  \end{array}\right.
\end{equation}
where
\begin{eqnarray*}
  \begin{split}
    \bar{\cL}=\bar{a}^{ij}\frac{\partial^2}{\partial x^i\partial x^j},~
        \bar{\cM}^k=\bar{\sigma}^{ik}\frac{\partial}{\partial x^i},
         ~~k=1,\cdots,m.
  \end{split}
\end{eqnarray*}
In view of Theorem \ref{thm independent space}, we have
\begin{eqnarray*}
  \begin{split}
    &\|u\|_{\sH_{p}^{n+2}}   \\
        \leq & C(d,p,\lambda,\Lambda)
            \|(\cL -\bar{\cL})u+(\cM-\bar{\cM})^k v^k
                +F(u,v,\cdot,\cdot)\|_{\bH_{p}^{n}}          \\
    \leq& C(d,p,\lambda,\Lambda)[
        \eps (\|u_{xx}\|_{\bH_p^n}+\|v_x\|_{\bH_p^n})
            +K_0(\|u\|_{\bH_p^n}+\|v\|_{\bH_p^n})
                 +\|F(0,0,\cdot,\cdot)\|_{\bH_p^n}            \\
    &+\eps_1(\|u_{xx}\|_{\bH_p^n}+\|v_x\|_{\bH_p^n})
                +(\varrho(\eps_1)+\eps_1)(\|u\|_{\bH_p^n}+\|v\|_{\bH_p^n})
                 ]     \\
    \leq & C(d,p,\lambda,\Lambda)[(\eps+\eps_1)
        (\|u_{xx}\|_{\bH_p^n}+\|v_x\|_{\bH_p^n})
        +(K_0+\varrho(\eps_1)+\eps_1)(\|u\|_{\bH_p^n}+\|v\|_{\bH_p^n})   \\
    &    +\|F(0,0,\cdot,\cdot)\|_{\bH_p^n}].
  \end{split}
\end{eqnarray*}
Note that the above still holds if $\bH_p^n$ is replaced by $\bH_p^n(t)$
for $t\in [0,T).$ Furthermore, if  $K_0=0$ and $\varrho\equiv 0,$ the map
$F$ does not depend on $(u,v)$ and we get instead that
$$\|u\|_{\sH_{p}^{n+2}}
    \leq  C(d,p,\lambda,\Lambda)[
        \eps (\|u_{xx}\|_{\bH_p^n}+\|v_x\|_{\bH_p^n})
                 +\|F\|_{\bH_p^n}],
    $$
which implies the last assertion of Theorem \ref{thm perturbation} by
taking $\eps$ small enough such that $C(d,p,\lambda,\Lambda)\eps<1/2.$

Now, fix $t\in[0,T).$
 Then, noting that $\|v\|_{\bH_p^n(t)} \leq T^{(2-p)/2}\|v\|_{\bH_{p,2}^n}(t),$
from Lemma \ref{lem BSDE diffusion term estimate}, we conclude that for any
$\eps_2>0,$
 there exists a constant $C=C(T,p,\eps_2,\varrho)$ such that
    $$\|v\|_{\bH_{p}^n(t)}   \leq
        \eps_2
        (\|u_{xx}\|_{\bH_p^n(t)}+\|v_x\|_{\bH_p^n(t)}+\|F(0,0,\cdot,\cdot)\|_{\bH_p^n(t)})
            +C_2(T,p,\eps_2,\varrho,\Lambda)\|u\|_{\bH_p^n(t)}.
        $$
Thus, it follows that
\begin{eqnarray*}
  \begin{split}
    &\|u\|_{\sH_{p}^{n+2}(t)}   \\
    \leq &
    C_1(d,p,\lambda,\Lambda)\{[\eps+\eps_1+(K_0+\varrho(\eps_1)+\eps_1)\eps_2]
                (\|u_{xx}\|_{\bH_p^n(t)}+\|v_x\|_{\bH_p^n(t)})     \\
    &     +[(K_0+\varrho(\eps_1)+\eps_1)\eps_2+1]\|F(0,0,\cdot,\cdot)\|_{\bH_p^n(t)}
          +(K_0+\varrho(\eps_1)+\eps_1)(1+C_2(T,p,\eps_2,\varrho))\|u\|_{\bH_p^n(t)}
        \}.
  \end{split}
\end{eqnarray*}
 Taking $\eps_1=\eps,$
$\eps_2=\eps/(K_0+\varrho(\eps)+\eps+1)$ and $\eps=1/(4C_1+1),$
 we get
\begin{eqnarray*}
  \begin{split}
    \|u\|_{\sH_{p}^{n+2}(t)}
    \leq
         5 \|F(0,0,\cdot,\cdot)\|_{\bH_p^n(t)}
          +C(T,p,d,\lambda,\Lambda,K_0,\varrho(\eps))\|u\|_{\bH_p^n(t)},
  \end{split}
\end{eqnarray*}
which immediately implies the following inequality
$$\|u\|_{\sH_{p}^{n+2}(t)}^p \leq
C(T,p,d,\lambda,\Lambda,K_0,\varrho(\eps))
(\|F(0,0,\cdot,\cdot)\|_{\bH_p^n(t)}^p+\|u\|_{\bH_p^n(t)}^p).$$ Since (see
Remark \ref{rmk sotchastic banach thm})
$$E\sup_{s\in[t, T]}\|u(s,\cdot)\|^p_{H_{p}^{n}} \leq
                    C(p,T)\|u\|^p_{\sH_p^{n+2} (t)},$$
we have
\begin{eqnarray*}
  \begin{split}
         &\|u\|_{\sH_{p}^{n+2}(t)}^p                    \\
    \leq &C(T,p,d,\lambda,K_0,\Lambda,\varrho(\eps))
        \left(\|F(0,0,\cdot,\cdot)\|_{\bH_p^n(t)}^p
            +E \int_t^T\|u(s,\cdot)\|^p_{H_{p}^{n}}ds   \right)  \\
    \leq &C(T,p,d,\lambda,K_0,\Lambda,\varrho(\eps))
        \left(\|F(0,0,\cdot,\cdot)\|_{\bH_p^n(t)}^p
            +\int_t^T\|u\|_{\sH_{p}^{n+2}(s)}^p ds   \right).
  \end{split}
\end{eqnarray*}
Using Gronwall inequality, we get the desired estimation
\eqref{perturbation estimate}.

\emph{Step 2.} We use the standard method of continuity to prove the
existence of the solution $u\in \sH_p^{n+2}$. For $\theta\in [0,1],$ we
consider the BSPDE

\begin{equation}\label{perturbation continuity method 1}
  \left\{\begin{array}{l}
    \begin{split}
  -du =(\cL_{\theta}u+\cM_{\theta}^k v^k+(1-\theta)F(u,v,t,x))dt
                -v^kdW^k_t
    \end{split}\\
    \begin{split}
    u(T,\cdot)=0
    \end{split}
  \end{array}\right.
\end{equation}
where
$$\cL_{\theta}:=\theta \bar{\cL}+(1-\theta)\cL  \textrm{ and }
 \cM_{\theta}^k=\theta \bar{\cM}^k +(1-\theta)\cM^k.  $$
Note that the priori estimate \eqref{perturbation estimate} holds with the
constant $C$ being independent of $\theta.$ Assume that BSPDE
\eqref{perturbation continuity method 1} has a unique solution $ u\in
\sH_{p,0}^{n+2} $ for $\theta=\theta_0.$ Theorem \ref{thm independent
space} shows that this assumption is true for $\theta_0=1.$
For any $u_1\in\sH_{p,0}^{n+2},$  the following BSPDE
\begin{equation}\label{100331}
  \left\{\begin{array}{l}
    \begin{split}
-du =&\{\cL_{\theta_0}u+\cM_{\theta_0}^k v^k+(1-\theta_0)F(u,v,t,x)
    + (\theta-\theta_0) [ (\bar{\cL}-\cL) u_1   \\
        &+(\bar{\cM}^k-\cM^k)(\bD u_1)^k+F(u_1,\bD u_1,t,x) ]   \}dt
                -v^kdW^k_t,
    \end{split}\\
    \begin{split}
    u(T,\cdot)=0,
    \end{split}
  \end{array}\right.
\end{equation}
has a unique solution $u$ in $\sH_{p,0}^{n+2},$ and we can define the
solution map as follows
$$\mathfrak{R}_{\theta_0}:~\sH_{p,0}^{n+2}\rightarrow
\sH_{p,0}^{n+2},~~u_1\mapsto u.$$

Then for any $u_i\in \sH_{p,0}^{n+2},$ $i=1,2,$  we have
\begin{eqnarray*}
  \begin{split}
    \|\mathfrak{R}_{\theta_0}u_2-\mathfrak{R}_{\theta_0}u_1\|_{\sH_p^{n+2}}
    \leq &
        C |\theta-\theta_0|
        \|(\bar{\cL}-\cL)(u_2-u_1)+
            (\bar{\cM}^k-\cM^k)(\bD u_2-\bD u_1)^k        \\
    &+F(u_2,\bD u_2,t,x)-F(u_1,\bD u_1,t,x)\|_{\bH_p^n}    \\
    \leq &
    \bar{C}|\theta-\theta_0|
        \|u_1-u_2\|_{\sH_p^{n+2}},
  \end{split}
\end{eqnarray*}
where $\bar{C}$ does not depend on  $~\theta $ and $\theta_0.$ If
$\bar{C}|\theta-\theta_0|<1/2,$ $\mathfrak{R}_{\theta_0}$ is a
contraction mapping and it
has a unique fixed point $u\in \sH_{p,0}^{n+2}$
which solves BSPDE \eqref{perturbation continuity method 1}.
 In this way if \eqref{perturbation continuity method 1} is
 solvable for $\theta_0,$ then it is solvable for $\theta$ satisfying
 $\bar{C}|\theta-\theta_0|<1/2.$ In finite number of steps starting
  from $\theta=1,$ we get to $\theta=0.$ This completes the proof.
\end{proof}

\begin{lem}\label{lem after perturbation}
    Under the assumptions \ref{ass1}-\ref{ass4}, there exists an
    $\eps=\eps(n,\gamma,d,p,\lambda,\Lambda)>0$ such that
    if $\kappa(\infty -)< \eps,$    then the condition of
    Theorem \ref{thm perturbation} is satisfied .
    Hence by Theorem \ref{thm perturbation}, we conclude that there exists a
    unique solution $u\in \sH_{p,0}^{n+2}$ to BSPDE \eqref{BSPDE} with the zero
    terminal condition satisfying the following inequality
    $$\|u\|_{\sH_p^{n+2}}\leq C(T,\varrho,d,p,
    \lambda,\Lambda)
                \|F(0,0,\cdot,\cdot)\|_{\bH_p^{n}}.$$
\end{lem}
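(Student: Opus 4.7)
The plan is to realize the hypothesis of Theorem~\ref{thm perturbation} by freezing the spatial dependence of the leading coefficients at a single point. First I set
\[
\bar a^{ij}(t) := a^{ij}(t,0), \qquad \bar\sigma^{ik}(t) := \sigma^{ik}(t,0),
\]
which inherits super-parabolicity (from Assumption~\ref{ass1}) and the uniform bound $\Lambda$ (from Assumption~\ref{ass3}) in the form required by Theorem~\ref{thm independent space}. By Assumption~\ref{ass2} the pointwise control $|(a-\bar a)(t,x)|+|(\sigma-\bar\sigma)(t,x)|\le \kappa(|x|)\le \kappa(\infty-)<\eps$ holds uniformly in $(t,x)$, while the Hölder/Lipschitz part of the $B^{|n|+\gamma}$-norm of $a-\bar a$ and $\sigma-\bar\sigma$ remains bounded by $2\Lambda$.

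The heart of the proof is the following refined multiplier estimate: for any $f\in B^{|n|+\gamma}$ and $w\in H_p^n$,
\begin{equation}\label{plan-mult}
  \|fw\|_{n,p}\le C_1\|f\|_{\infty}\|w\|_{n,p}+C_2\|f\|_{B^{|n|+\gamma}}\|w\|_{n-1,p},
\end{equation}
with $C_i=C_i(n,\gamma,d,p)$. For $n=0$, \eqref{plan-mult} is just Hölder's inequality since $B^0$ is the sup norm. For positive integer $n$, it follows by iterating the Leibniz rule on $D^\alpha(fw)$ and routing every derivative that lands on $f$ into the $\|w\|_{n-1,p}$ factor; for non-integer or negative $n$ one uses the paraproduct/commutator decomposition of $(1-\Delta)^{n/2}$ as in \cite{Triebel_92}.

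Applying \eqref{plan-mult} with $(f,w)=((a-\bar a)^{ij},(u_1)_{ij})$ and with $(f,w)=((\sigma-\bar\sigma)^{ik},(v_1)^k_i)$, then bounding $\|w\|_{n,p}$ by $\|(u_1)_{xx}\|_{n,p}$ or $\|(v_1)_x\|_{n,p}$, yields
\[
 \|(a-\bar a)^{ij}(u_1)_{ij}\|_{n,p}+\|(\sigma-\bar\sigma)^{ik}(v_1)^k_i\|_{n,p}
 \le C_1\eps\bigl(\|(u_1)_{xx}\|_{n,p}+\|(v_1)_x\|_{n,p}\bigr)+2\Lambda C_2\bigl(\|(u_1)_{xx}\|_{n-1,p}+\|(v_1)_x\|_{n-1,p}\bigr).
\]
The standard interpolation in Bessel-potential spaces gives, for any $\delta>0$,
\[
 \|(u_1)_{xx}\|_{n-1,p}\le \delta\,\|(u_1)_{xx}\|_{n,p}+C(\delta)\|u_1\|_{n,p},
\]
and analogously for $v_1$. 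Choosing $\delta:=\eps/(2\Lambda C_2)$ absorbs the higher-order pieces into the $\eps$-term and leaves only a constant $K_0=K_0(\eps,n,\gamma,d,p,\Lambda)$ in front of $\|u_1\|_{n,p}+\|v_1\|_{n,p}$. Shrinking the value of $\eps$ in the hypothesis $\kappa(\infty-)<\eps$ so that $(C_1+1)\eps$ falls below the universal threshold in Theorem~\ref{thm perturbation} verifies condition~\eqref{1003_3}, and existence, uniqueness, and the stated $\sH_p^{n+2}$-estimate follow immediately from Theorem~\ref{thm perturbation}.

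The main obstacle is establishing \eqref{plan-mult}: the crude bound in Lemma~\ref{lem multiplier}(i) mixes $\|f\|_\infty$ with all higher derivatives of $f$ in a single constant, and thereby wastes the smallness of $\|a-\bar a\|_\infty$ that we need to exploit. Disentangling the $L^\infty$ part from the higher-regularity part of the $B^{|n|+\gamma}$-norm—via Leibniz for integer $n$ and paraproduct for fractional $n+\gamma$—is the only technically delicate step; the remainder is interpolation and a direct appeal to Theorem~\ref{thm perturbation}.
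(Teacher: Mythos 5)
Your proposal is correct and follows essentially the same route as the paper: freeze the leading coefficients at $x=0$ and verify condition \eqref{1003_3} of Theorem \ref{thm perturbation} by exploiting the smallness of $\kappa(\infty-)$. The refined multiplier estimate separating $\|f\|_{\infty}$ from the $B^{|n|+\gamma}$-seminorm, followed by interpolation in $H^s_p$ to absorb the lower-order term into $K_0$, is precisely the ``standard method'' that the paper delegates to Krylov's Lemma~6.6, so you have simply supplied the details the paper cites.
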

\begin{proof}
Define $\bar{a}(t)=a(t,0)$ and $\bar{\sigma}(t)=\sigma(t,0).$  It
follows from Lemma \ref{lem multiplier} that, for any $(u_{1},v_{1})\in
H_{p}^{n+2}\times H_{p}^{n+1},$ we have
\begin{eqnarray}\label{1003_29}
  \begin{split}
    &\|(a-\bar{a})^{ij}(t,\cdot)(u_1)_{ij}\|_{n,p}+
       \|(\sigma-\bar{\sigma})^{ik}(t,\cdot)(v_1)^k_{i}\|_{n,p} \\
        \leq
      &  C(n,d,p,\gamma)\left(
      \|(a-\bar{a})(t,\cdot)\|_{B^{|n|+\gamma}}\|(u_1)_{xx}\|_{n,p}
       + \|(\sigma-\bar{\sigma})(t,\cdot)\|_{B^{|n|+\gamma}}\|(v_1)_{x}\|_{n,p}
       \right).
  \end{split}
\end{eqnarray}
In view of \eqref{1003_3}, there exists a constant
$\eps_1=\eps_1(n,\gamma,d,p,\lambda,\Lambda)$ such that  if
\begin{eqnarray}\label{100329 1}
  \begin{split}
\|a(t,\cdot)-\bar{a}(t)\|_{B^{|n|+\gamma}}\|(u_1)_{xx}\|_{n,p}
       + \|\sigma(t,\cdot)-\bar{\sigma}(t)\|_{B^{|n|+\gamma}}\|(v_1)_{x}\|_{n,p}
       \leq \eps_1,\forall t\in [0,T],
  \end{split}
\end{eqnarray}
  the condition \eqref{1003_3} in Theorem \ref{thm perturbation} is satisfied.
With a standard method (c.f. \cite[Lemma 6.6, pp. 215-216]{Krylov_99}), we
can check that if if $\eps$ in our lemma is sufficiently small,
\eqref{100329 1} holds true. This complete the proof.
\end{proof}

To prove Theorem \ref{thm general case}, we need a generalization of the
Littlewood-Paley inequality, which is due to Krylov \cite{Krylov_94}.

\begin{lem}\label{Harmonic result 2}
    Let $p\in (1,\infty),~n\in(-\infty,+\infty),$ $\delta>0,$
 and $\zeta_k\in C^{\infty},~k=1,2,3,\dots$ Assume that for any multi-index
 $\alpha$ and $x\in \bR^d$
 $$\sup_{x\in \bR^d}\sum_k|D^{\alpha}\zeta_k(x)|\leq M(\alpha),$$
 where $M(\alpha)$ is constant. Then there exists a constant $C=C(d,n,M)$
 such that, for any $f\in H_p^n,$
 $$\sum_k \|\zeta_k f \|_{n,p}^p\leq C\|f\|_{n,p}^p.$$
 If in addition
    $$\sum_k |\zeta_k(x)|^p\geq \delta,$$
    then for any $f\in H_p^n,$
       $$\|f\|_{n,p}^p \leq C(d,n,M,\delta) \sum_k\|\zeta_k f\|_{n,p}^p.$$
\end{lem}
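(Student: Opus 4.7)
My plan is to handle the $n=0$ case by a direct pointwise estimate, and then reduce the general $n\in\bR$ case to $n=0$ by commuting the Bessel potential $(1-\Delta)^{n/2}$ with multiplication by $\zeta_k$.

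For $n=0$, since $p\ge 1$, the elementary inclusion $\ell^1\subset\ell^p$ for nonnegative sequences gives the pointwise bound
$$\sum_k|\zeta_k(x)|^p\;\le\;\Bigl(\sum_k|\zeta_k(x)|\Bigr)^p\;\le\;M(0)^p.$$
Fubini then yields the upper estimate
$$\sum_k\|\zeta_k f\|_p^p=\int_{\bR^d}\Bigl(\sum_k|\zeta_k(x)|^p\Bigr)|f(x)|^p\,dx\;\le\;M(0)^p\,\|f\|_p^p,$$
while the lower estimate follows immediately from the hypothesis $\sum_k|\zeta_k|^p\ge\delta$:
$$\|f\|_p^p\;\le\;\frac{1}{\delta}\int_{\bR^d}\sum_k|\zeta_k(x)|^p|f(x)|^p\,dx\;=\;\frac{1}{\delta}\sum_k\|\zeta_k f\|_p^p.$$

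For general $n$, I would first prove the result for nonnegative integers by induction, using the equivalent norm $\|f\|_{n,p}\sim\sum_{|\alpha|\le n}\|D^\alpha f\|_p$ together with the Leibniz formula
$$D^\alpha(\zeta_k f)=\sum_{\beta\le\alpha}\binom{\alpha}{\beta}D^\beta\zeta_k\,D^{\alpha-\beta}f,$$
observing that the families $\{D^\beta\zeta_k\}_k$ still satisfy the pointwise sum hypothesis with constants $M(\alpha+\beta)$, so each term is handled by the $n=0$ bound applied to the appropriate derivative of $f$. Negative integers follow by duality via $(H_p^n)^*=H_{p'}^{-n}$, and fractional $n$ by complex interpolation between the integer endpoints. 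Alternatively, for any real $n$ one can write directly
$$(1-\Delta)^{n/2}(\zeta_k f)=\zeta_k(1-\Delta)^{n/2}f+[(1-\Delta)^{n/2},\zeta_k]f,$$
apply the $n=0$ bound to the first term, and estimate the commutator, which is a pseudodifferential operator of order $n-1$, by $\|f\|_{n-1,p}$, then iterate or interpolate to absorb the lower order term.

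The main obstacle will be establishing that the commutators $[(1-\Delta)^{n/2},\zeta_k]$ are summable in $k$ uniformly, which is precisely the point where the pointwise summability hypothesis $\sup_x\sum_k|D^\alpha\zeta_k(x)|\le M(\alpha)$ (rather than mere uniform boundedness) is essential. For integer $n$ this difficulty is bypassed by the Leibniz reduction above; for fractional $n$ one appeals either to complex interpolation from the already-proved integer endpoints or to Coifman--Meyer style pseudodifferential estimates based on kernel bounds for the Bessel symbol $(1+|\xi|^2)^{n/2}$.
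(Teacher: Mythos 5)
The paper does not prove this lemma at all: it is quoted from Krylov \cite{Krylov_94} (see also \cite{Krylov_99}) and used as a black box, so there is no in-paper argument to compare your attempt with. Judged on its own terms, your plan is correct where it is explicit. The $n=0$ case is complete. The positive-integer case via Leibniz works for both inequalities, provided you add the absorption step for the reverse one: Leibniz gives
$$\|f\|_{n,p}^p\le C\sum_k\|\zeta_kf\|_{n,p}^p+C\|f\|_{n-1,p}^p,\qquad \|f\|_{n-1,p}^p\le \eps\|f\|_{n,p}^p+C(\eps)\|f\|_{0,p}^p,$$
and $\|f\|_{0,p}^p$ is again controlled by $\delta^{-1}\sum_k\|\zeta_kf\|_{0,p}^p$. (Minor slip: the family $\{D^\beta\zeta_k\}_k$ satisfies the hypothesis with constant $M(\beta)$, not $M(\alpha+\beta)$.)

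The genuine gap is in the second (reverse) inequality for non-integer and negative $n$. Complex interpolation transfers the boundedness of the linear map $f\mapsto(\zeta_kf)_k$ from the integer endpoints to $\ell^p(H_p^{n_\theta})$, but it does not transfer lower bounds: boundedness below is not an interpolation-stable property unless you produce a common bounded left inverse at both endpoints, and the natural candidate requires dividing by $\sum_j|\zeta_j|^p$, which is not smooth where the $\zeta_k$ vanish. The duality reduction for negative $n$ has the same asymmetry: the $\ell^p$--$\ell^{p'}$ pairing yields the first inequality cleanly but not the second. The commutator route you mention last is in fact the one that works for all real $n$ (and is essentially Krylov's), but the required estimate --- an $\ell^p$-summable-in-$k$ bound $\sum_k\|[(1-\Delta)^{n/2},\zeta_k]f\|_p^p\le C\|f\|_{n-1,p}^p$ derived from kernel bounds for the Bessel potential together with $\sup_x\sum_k|D^{\alpha}\zeta_k(x)|\le M(\alpha)$ --- is precisely the substance of the lemma; identifying it as ``the main obstacle'' is accurate but does not discharge it, and for $n\le 0$ a further bootstrap is needed because $\|f\|_{n-1,p}$ is then not absorbed by interpolating against $\|f\|_{0,p}$.
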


\begin{proof}[Proof of Theorem \ref{thm general case}]
\emph{Step 1.}
Without loss of generality, assume that $G=0.$

    In fact, by Theorem \ref{thm independent space}, there exists a unique
    solution $\bar{u} \in \sH_p^{n+2} $ for the equation
\begin{equation}
  \left\{\begin{array}{l}
    -du=\Delta udt-v^kdW^k_t,~t\in [0,T];\\
    u(T,x)=G(x)
  \end{array}\right.
\end{equation}
satisfying the estimate
$$\|\bar{u}\|_{\sH_p^{n+2}}\leq C(T,p,d,\lambda,\Lambda)
    \|G\|_{L^{p}(\Omega,\sF_{T},H_{p}^{n+1})}.$$
    Without lose of generality, we consider
$(\bar{u}(t,\cdot),\bD \bar{u}(t,\cdot)) \in H_p^{n+2}\times H_p^{n+1}$ for
any $(t,\omega)\in [0,T]\times \Omega.$ Setting
$(u,v):=(\tilde{u}+\bar{u},\tilde{v}+\bar{v}),$ we need only to consider
the BSPDE
\begin{eqnarray*}
  \begin{split}
    -d\tilde{u}(t,x)=&[a^{ij}(t,x)\tilde{u}_{x^ix^j}(t,x)+\sigma^{ik}(t,x)\tilde{v}_{x^i}^k(t,x)
                +\bar{F}(\tilde{u},\tilde{v},t,x)]dt                        \\
              &~  -\tilde{v}^k(t,x)dW^k_t
  \end{split}
\end{eqnarray*}
where
$$\bar{F}(\tilde{u},\tilde{v},t,x)=F(\tilde{u}+\bar{u},\tilde{v}+\bar{v},t,x)
+a^{ij}(t,x)\bar{u}_{x^i x^j}(t,x)
        +\sigma^{ik}(t,x)\bar{v}^k_{x^i}(t,x)-\Delta \bar{u}(t,x). $$
It can be checked that $\bar{F}$ satisfies the same condition as $F.$

\emph{Step 2.}
We  give a priori estimate for the solution $u\in\sH_{p,0}^{n+2}$ to BSPDE
 \eqref{BSPDE}.

For $\eps>0$ in Lemma \ref{lem after perturbation}, by Assumption
\ref{ass2}, there exists  $\eps_0>0$ such that $\kappa(s)<\eps$ for any
$s\in [0,\eps_0].$ Let $\{ \zeta_l: l=1,2,3,\dots \} $ be a standard
partition of unity in $\bR^d$ such that, for any $l$, the support of
$\zeta_l$ lies in the ball $B(x_l,\eps_0/4).$ For any $l,$ take a function
$\eta_l\in C^{\infty}_{c}$ valued in $[0,1]$ such that the support of
$\eta_l$ lies in  $B_l(x_l,\eps_0/2)$ and $\eta_l=1$ on $B_l.$ Denote
$v:=\bD u.$ Then we get
\begin{equation}
  \left\{\begin{array}{l}
  \begin{split}
    -d(u\zeta_l)(t,x)=&[\tilde{\cL}_l(t,x) (\zeta_l u)(t,x)+\tilde{\cM}^k_l
            (t,x)(\zeta_l v^k)(t,x)+\tilde{F}(t,x)]dt   \\
            &-\zeta_l (x)v^k(t,x)dW^k_t
            \\
    (u\zeta_l)(T,x)=&0
    \end{split}
  \end{array}\right.
\end{equation}
where
\begin{eqnarray*}
  \begin{split}
    \tilde{\cL}_l(t,x) :=&\eta_l(x) \cL (t,x)+(1-\eta_l(x))\cL(t,x_l)       ,\\
    \tilde{\cM}^k_l(t,x):=&\eta_l(x) \cM^k (t,x)+(1-\eta_l(x))\cM^k(t,x_l)  ,\\
    \tilde{F}(t,x):=&-2(\zeta _l)_{x^i}a^{ij}u_{x^j}(t,x)-(\zeta
        _l)_{x^ix^j}a^{ij}u(t,x)-    \\
        &(\zeta_l)_{x^i}\sigma^{ik}v^k(t,x)
            +\zeta_l F(u,v,t,x).
  \end{split}
\end{eqnarray*}
From Theorem \ref{stochastic banach property thm} and Lemma \ref{lem after
perturbation}, we get
$$    \|u\zeta_l\|_{\bH_p^{n+2}}+\|v\zeta_l\|_{\bH_p^{n+1}}\leq
        C(T,\varrho,\lambda,\Lambda,d,p)\|\tilde{F}\|_{\bH_p^n}         \\
    $$
 Applying Lemma
\ref{Harmonic result 2} and \ref{lem multiplier}, we can get such
conclusions as
\begin{eqnarray*}
  \begin{split}
  \sum_l\|\zeta_l F(\omega,t)\|_{n,p}^p\leq
   & C\| F(\omega,t)\|_{n,p}^p,  \\
  \sum_l\|(\zeta_l)_{x^i x^j}a^{ij}u(\omega,t)\|_{n,p}^p
  \leq & C \|a^{ij}u(\omega,t)\|_{n,p}^p \leq C \|u\|_{n,p}^p,   \\
  \|u(\omega,t)\|_{n,p}\leq C \sum_l\|\zeta_lu(\omega,t)\|_{n,p}
  \leq C &\|u(\omega,t)\|_{n,p},~(\omega,t) \in \Omega\times [0,T] a.e..
  \end{split}
\end{eqnarray*}
Integrating each term on $\Omega\times [0,T],$ we have
\begin{eqnarray*}
  \begin{split}
    &\|u\|_{\sH_p^{n+2}}                                   \\
    \leq\,& C(T,n,\kappa,d,p,\lambda,\Lambda)
            \left (\|F(u,v,\cdot,\cdot)\|_{\bH_p^n}+
                \|u\|_{\bH_p^{n+1}}+\|v\|_{\bH_p^n}\right)       \\
        \leq\,& C_1(T,\kappa,n,d,p,\lambda,\Lambda)
                \left(\eps_1\|u\|_{\sH_p^{n+2}}+
                \|F(0,0,\cdot,\cdot)\|_{\bH_p^n}+(1+\varrho(\eps_1))\|u\|_{\bH_p^{n}}+\|v\|_{\bH_p^n}\right)
  \end{split}
\end{eqnarray*}
where  $\eps_1>0$ is arbitrary. Then, noting that $\|v\|_{\bH_p^n} \leq
T^{(2-p)/2}\|v\|_{\bH_{p,2}^n},$ from Lemma \ref{lem BSDE diffusion term
estimate}, we conclude that for any $\eps_2>0,$
 there exists a constant $C=C(T,p,\eps_2,\varrho)$ such that
    $$\|v\|_{\bH_{p}^n}   \leq
        \eps_2
        \left(\|u\|_{\sH_p^{n+2}}+\|F(0,0,\cdot,\cdot)\|_{\bH_p^n}\right)
            +C_2(T,p,\eps_2,\varrho,\Lambda)\|u\|_{\bH_p^n}.
        $$
By choosing $\eps_1+\eps_2$ small enough such that
$C_1(T,\kappa,n,d,p,\lambda,\Lambda)(\eps_1+\eps_2)<1/2, $ we get
\begin{eqnarray}\label{index 1 of prof gneral case thm}
 \|u\|_{\sH_p^{n+2}}\leq
    C(T,\kappa,\varrho,n,d,p,\lambda,\Lambda)
    \left(\|F(0,0,\cdot,\cdot)\|_{\bH_p^n}+\|u\|_{\bH_p^n}\right).
\end{eqnarray}
In view of Theorem
 \ref{stochastic banach property thm} and Remark \ref{rmk sotchastic banach
 thm},
   we can show in a similar way the following inequality
 $$E\|u(t,\cdot)\|^p_{n,p}\leq C\|F(0,0,\cdot,\cdot)\|^p_{\bH_p^n}
    +C\int_t^TE\|u(s,\cdot)\|^p_{n,p}ds
     $$
for all $t\in[0,T].$ Using Gronwall's inequality, we have
$$ \| u\|^p_{\bH_p^n} \leq C \|F(0,0,\cdot,\cdot)\|^p_{\bH_p^n} ,$$
which along with \eqref{index 1 of prof gneral case thm} implies the
following estimate
\begin{eqnarray}\label{priori estimate for general}
  \begin{split}
 \|u\|_{\sH_p^{n+2}}\leq
 C(T,\kappa,\varrho,n,d,p,\lambda,\Lambda)
    \|F(0,0,\cdot,\cdot)\|_{\bH_p^n}.
  \end{split}
\end{eqnarray}

\emph{Step 3.} In the end, proceeding identically as in $Step~ 2$ in the
proof of Theorem \ref{thm perturbation}, we can prove the existence and
uniqueness of the solution. The proof is complete.

\end{proof}

\begin{cor}\label{cor p and q}
  Let the assumptions of Theorem \ref{thm general case} be satisfied. We assume that
  the assumptions are not only satisfied for $p$ but also for $q\in(1,2].$
  Then the solution $u$ in Theorem \ref{thm general case}
  belongs to $\sH_q^{n+2}.$
\end{cor}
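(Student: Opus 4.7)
The plan is to inductively trace the three-tier construction of the solution (Theorem \ref{thm independent space}, then Theorem \ref{thm perturbation}, then Theorem \ref{thm general case}) and show that at every stage the constructed object lies simultaneously in $\sH_p^{n+2}$ and $\sH_q^{n+2}$. Because Theorem \ref{thm general case} gives uniqueness in each of these Banach spaces separately, once an element of $\sH_p^{n+2}\cap\sH_q^{n+2}$ that solves \eqref{BSPDE} is produced, it must coincide with both unique solutions, yielding the corollary. As in Step~1 of the proof of Theorem~\ref{thm general case}, I first reduce to $G=0$ by subtracting the $(\bar u,\bar v)$ built from the heat-type BSPDE; by Remark~\ref{rmk p q} applied with the trivial coefficients $a=I$, $\sigma=0$, that $\bar u$ belongs to $\sH_p^{n+2}\cap\sH_q^{n+2}$, and Assumption~\ref{ass4} (imposed for both $p$ and $q$) ensures that the new driver $\bar F(0,0,\cdot,\cdot)$ remains in $\bH_p^n\cap\bH_q^n$.

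For the space-invariant case the key input is already Remark~\ref{rmk p q}: by choosing the approximating sequence $(F^k,G^k)$ in the triple intersection
\[
(\bH_p^n\cap\bH_q^n\cap\bH_2^n)\times\bigl(L^p(\sF_T,H_p^{n+1})\cap L^q(\sF_T,H_q^{n+1})\cap L^2(\sF_T,H_2^{n+1})\bigr),
\]
the approximating solutions live in $\sH_p^{n+2}\cap\sH_q^{n+2}\cap\sH_2^{n+2}$; uniqueness in $\sH_2^{n+2}$ together with the a~priori estimates of Theorem~\ref{thm independent space} in both $\sH_p^{n+2}$ and $\sH_q^{n+2}$ force the limits in the two norms to coincide. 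Next, for the perturbation result (Theorem~\ref{thm perturbation}) I would equip $\mathbf{X}:=\sH_{p,0}^{n+2}\cap\sH_{q,0}^{n+2}$ with the norm $\|u\|_\mathbf{X}=\|u\|_{\sH_p^{n+2}}+\|u\|_{\sH_q^{n+2}}$, so that $\mathbf{X}$ is Banach. The contraction mapping $\mathfrak R_{\theta_0}$ of that proof feeds into \eqref{100331} an inhomogeneity built from $(\bar\cL-\cL)u_1$, $(\bar\cM^k-\cM^k)(\bD u_1)^k$ and $F(u_1,\bD u_1,\cdot,\cdot)$; by Assumptions~\ref{ass3}--\ref{ass4} (assumed for both exponents) and the preceding step, this inhomogeneity lies in $\bH_p^n\cap\bH_q^n$ whenever $u_1\in\mathbf{X}$, so $\mathfrak R_{\theta_0}$ maps $\mathbf{X}$ to itself. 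Replacing the original contraction constants $\bar C_p,\bar C_q$ by $\bar C:=\max(\bar C_p,\bar C_q)$ allows the same finite chain $\theta=1\to\theta=0$ to be used simultaneously for both indices, producing a fixed point in $\mathbf{X}$.

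Finally, the partition-of-unity reduction in the proof of Theorem~\ref{thm general case} is coefficient-universal: each localised equation for $u\zeta_l$ is a perturbation problem of the form just treated, whose driver $\tilde F$ is built from multiplications by $\zeta_l, (\zeta_l)_{x^i}, (\zeta_l)_{x^ix^j}$ and $\eta_l$ and from $F(u,\bD u,\cdot,\cdot)$; all of these preserve membership in $\bH_p^n\cap\bH_q^n$ provided $(u,\bD u)$ does, and the continuity-method step of Step~3 of that proof can again be run with a common contraction constant. I expect the main technical obstacle to be purely bookkeeping: tracking that every approximation, every contraction radius, and every a~priori bound can be selected uniformly for both $p$ and $q$, with no structural change because all the hypotheses are imposed jointly.
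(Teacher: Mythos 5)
Your proposal is correct and follows essentially the same route as the paper: the paper also reduces to the space-invariant base case via Remark \ref{rmk p q} and then runs the method of continuity simultaneously in both exponents, the only cosmetic difference being that the paper phrases the final step as Picard iteration $u_l=\mathfrak{R}_{\theta_0}u_{l-1}$ starting from $u_0=0$ (so the iterates are literally the same objects, Cauchy in both $\sH_p^{n+2}$ and $\sH_q^{n+2}$), whereas you package the same fact as a contraction on the intersection space $\sH_{p,0}^{n+2}\cap\sH_{q,0}^{n+2}$ with the sum norm. Your explicit bookkeeping of the reduction to $G=0$ and of the common contraction constant $\bar C=\max(\bar C_p,\bar C_q)$ fills in details the paper leaves implicit.
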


\begin{proof}
%
We can
   prove our corollary by completing the $Step~ 3$ of the proof of
   Theorem \ref{thm general case}. The difference from $Step~ 2$ in the proof
of Theorem \ref{thm perturbation} lies that we use the Picard iteration
 this time instead of the contraction mapping principle.
Indeed, consider the equation \eqref{perturbation continuity method 1},
Take a $\theta=\theta_0$ equation \eqref{perturbation continuity method 1}
with zero terminal condition has a unique solution $ u\in
\sH_{p,0}^{n+2}(T)\cap\sH_{q,0}^{n+2}.$ By the way, this assumption is
satisfied for $\theta_0=1$ by Theorem \ref{thm independent space} and
Remark \ref{rmk p q}. Set $u_0=0$ and take iterations
$u_l=\mathfrak{R}_{\theta_0}u_{l-1},$ $l=1,2,3,\dots$. Then there exists a
 constant $\delta>0$ independent of $\theta_0$ such that if $\theta\in
[\theta_0-\delta,\theta_0+\delta]\cap [0,1],$ $u_l$ is a cauchy sequence
both in $\sH_{p,0}^{n+2}$ and $\sH_{p,0}^{n+2}$ and for these $\theta$s
 the solutions in $\sH_{p,0}^{n+2}$ and $\sH_{p,0}^{n+2}$ coincide. In
 finite steps from $\theta=1$, we get to $\theta=0.$ This completes the
 proof.
 \end{proof}

\section{Two related topics}

The proofs of the following results are similar to that of the SPDE
in \cite{Krylov_99}, and will be sketched only.

\subsection{Comparison theorem}
The following theorem shows that the solution to BSPDE \eqref{BSPDE}
is continuous w.r.t. the leading coefficients $a^{ij}$ and
$\sigma^{ik},$ the non-homogeneous drift term $F,$ and the terminal
value $G.$

\begin{thm}\label{thm depending on parameters}
Assume that for $l=1,2,3,\dots,$ we are given $a_l^{ij},\sigma_l^{ik},F_l,$
and $G_l$  verifying the same assumptions as $a^{ij},\sigma^{ik},F$ and $G$
in Theorem \ref{thm general case} with the same constants $\lambda,\Lambda$
and the same functions $\kappa,$ $\varrho.$ Let $\zeta(x)$ be a real
function taking values in $[0,1]$ such that $\zeta(x)=1$ if $|x|\leq 1$ and
$\zeta(x)=0$ if $|x|\geq 2.$ Define $\zeta_r(x)=\zeta(x/r)$ for
$r=1,2,3,\dots.$ And we also assume that, for
$r=1,2,3,\dots,i,j=1,\dots,d,k=1,\dots,m,t\in [0,T] ,$ and
$\omega\in\Omega,$
\begin{eqnarray}\label{thm depending on parameters 1}
  \begin{split}
    \|\zeta_r\{ a^{ij}(t,\cdot)-a_l^{ij}(t,\cdot)\} \|_{n,p}
        +\|\zeta_r\{ \sigma^{ik}(t,\cdot)-\sigma^{ik}_l(t,\cdot)\} \|_{n,p}
            \rightarrow 0
  \end{split}
\end{eqnarray}
as $l\rightarrow \infty.$ Furthermore,
$E\|G_l-G\|^p_{n+1,p}\rightarrow 0$ and
\begin{eqnarray}\label{thm depending on parameters 2}
    \|F(u,v,\cdot,\cdot)-F_l(u,v,\cdot,\cdot)\|_{\bH_p^{n}}
        \rightarrow 0,
\end{eqnarray}
whenever $u\in \sH_p^{n+2}$ and $v:=\bD u$. If we take the function $u$
from Theorem \ref{thm general case} and for any $l$ define
$u_l\in\sH_p^{n+2}$ as the unique solution of the following BSPDE
\begin{equation}\label{thm depending on parameters 3}
  \left\{\begin{array}{l}
    \begin{split}
      -d u_l (t,x)=&\big[
      a_l^{ij}(t,x)u_{lx^i x^j}(t,x) +\sigma_l^{ik}(t,x)v^{k}_{lx^i}(t,x)
      +F_l(u_l,v_l,t,x)\big]dt\\
      &-v_l^k(t,x)dW^k_t,\quad~~~ (t,x)\in [0,T]\times \bR^{d},
    \end{split}\\
    \begin{split}
    u_l(T,x)=G_l(x),~~~~~~x\in \bR^{d},
    \end{split}
  \end{array}\right.
\end{equation}
where $v_l:=\bD u_l,$ then we have $\|u-u_l\|_{\sH_p^{n+2}}\rightarrow 0$
as $l\rightarrow \infty.$
\end{thm}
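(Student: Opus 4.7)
The natural strategy is to derive a BSPDE for the difference and apply the a priori estimate of Theorem~\ref{thm general case}. Writing $w_l:=u-u_l$, $v:=\bD u$, $v_l:=\bD u_l$, $\tilde v_l:=v-v_l$, and subtracting the two BSPDEs, $w_l$ satisfies
\[
-dw_l = \bigl[a^{ij}(w_l)_{x^ix^j} + \sigma^{ik}(\tilde v_l)^k_{x^i} + \hat F_l(w_l,\tilde v_l,t,x) + R_l(t,x)\bigr]\,dt - \tilde v_l^k\,dW^k_t
\]
with terminal value $w_l(T,\cdot)=G-G_l$, where
\[
\hat F_l(\phi,\psi,t,x):=F(\phi+u_l,\psi+v_l,t,x)-F(u_l,v_l,t,x)
\]
inherits Assumption~\ref{ass4} from $F$ (same $\varrho$ and constants) with $\hat F_l(0,0,\cdot,\cdot)=0$, and the $(w_l,\tilde v_l)$-independent source is
\[
R_l := (a-a_l)^{ij}(u_l)_{x^ix^j}+(\sigma-\sigma_l)^{ik}(v_l)^k_{x^i}+F(u_l,v_l,\cdot,\cdot)-F_l(u_l,v_l,\cdot,\cdot).
\]
The leading coefficients $(a,\sigma)$ in this equation satisfy Assumptions~\ref{ass1}--\ref{ass3}, so Theorem~\ref{thm general case} applies and gives
\[
\|w_l\|_{\sH_p^{n+2}} \leq C\bigl(\|R_l\|_{\bH_p^n} + (E\|G-G_l\|^p_{n+1,p})^{1/p}\bigr),
\]
with $C$ independent of $l$ since all data share the same $\lambda,\Lambda,\kappa,\varrho$. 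The plan reduces to showing $\|R_l\|_{\bH_p^n}\to 0$.

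For the $F$--$F_l$ piece, insert and subtract $(u,v)$:
\[
F(u_l,v_l)-F_l(u_l,v_l) = [F(u_l,v_l)-F(u,v)] + [F(u,v)-F_l(u,v)] + [F_l(u,v)-F_l(u_l,v_l)].
\]
The middle term vanishes in $\bH_p^n$ by~(\ref{thm depending on parameters 2}); the outer two, via Assumption~\ref{ass4} applied to $F$ and $F_l$, contribute an $\eps$-multiple of $\|w_l\|_{\sH_p^{n+2}}$ (absorbable into the left-hand side once $\eps$ is chosen small) plus a $\varrho(\eps)$-multiple of $\|w_l\|_{\bH_p^n}+\|\tilde v_l\|_{\bH_p^n}$, which is dealt with by combining Lemma~\ref{BSDE lem2} (to dominate $\|\tilde v_l\|_{\bH_p^n}$) with a Gronwall argument, exactly as in Step~2 of the proof of Theorem~\ref{thm general case}.

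The principal work lies in the coefficient differences. Theorem~\ref{thm general case} applied to each $u_l$ yields a uniform bound $\|u_l\|_{\sH_p^{n+2}}\leq K$. For each radius $r$ we split
\[
(a-a_l)^{ij}(u_l)_{x^ix^j} = \zeta_r(a-a_l)^{ij}(u_l)_{x^ix^j} + (1-\zeta_r)(a-a_l)^{ij}(u_l)_{x^ix^j},
\]
and, at each fixed $r$, the inner piece is controlled by Lemma~\ref{lem multiplier}(i) applied to the localized multiplier $\zeta_r(a-a_l)$, combined with an interpolation between the uniform $B^{|n|+\gamma}$-bound on $a-a_l$ and the hypothesis $\|\zeta_r(a-a_l)\|_{n,p}\to 0$ from~(\ref{thm depending on parameters 1}), which produces a multiplier norm that actually vanishes as $l\to\infty$. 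The same treatment applies to $(\sigma-\sigma_l)^{ik}(v_l)^k_{x^i}$.

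The main obstacle is the tail $(1-\zeta_r)(a-a_l)^{ij}(u_l)_{x^ix^j}$: hypothesis~(\ref{thm depending on parameters 1}) gives only spatially localized control, while $(u_l)_{xx}$ is not a priori small at infinity uniformly in $l$. Following the analogous SPDE treatment in~\cite{Krylov_99}, I would exploit the uniform bound on $\{u_l\}$ to extract a subsequence converging weakly in $\sH_p^{n+2}$, use the uniqueness half of Theorem~\ref{thm general case} to identify the weak limit as $u$, and then upgrade to strong convergence by a dominated-convergence argument on $(a-a_l)(u_l)_{xx}$ (the integrand being dominated by $2\Lambda|(u_l)_{xx}|$, which is uniformly $L^p$-bounded, and converging almost everywhere along a further subsequence extracted from $\|\zeta_r(a-a_l)\|_{n,p}\to 0$). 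This renders the tail arbitrarily small for $r$ large, uniformly in $l$, and closes the argument.
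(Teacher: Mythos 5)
Your overall strategy (subtract the two equations, apply the a priori estimate of Theorem \ref{thm general case} to the difference, and show that the resulting source term tends to zero) is the right one, and your treatment of the $F$-part and of the terminal data is workable. The gap lies in how you group the second-order terms. By keeping $a,\sigma$ as the leading coefficients of the difference equation you force the residual source to be $(a-a_l)^{ij}(u_l)_{x^ix^j}+(\sigma-\sigma_l)^{ik}(v_l)^k_{x^i}$, i.e.\ the coefficient differences act on derivatives of the \emph{varying} solutions $u_l$. You correctly flag this as the main obstacle, but the remedy you sketch does not close it: uniform $L^p$-boundedness of $\{(u_l)_{xx}\}$ is not a dominating function in the sense required by the dominated convergence theorem (one needs a single integrable majorant, and boundedness in $L^p$ provides neither that nor uniform integrability of $|(u_l)_{xx}|^p$), and $(u_l)_{xx}$ has no pointwise limit along which the product $(a-a_l)(u_l)_{xx}$ could converge almost everywhere. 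Weak convergence of $u_l$ in $\sH_p^{n+2}$ likewise gives no strong convergence of this product. As written, the claim that the tail is small ``uniformly in $l$'' is not established.

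The paper sidesteps the problem entirely by regrouping the other way: $a^{ij}u_{x^ix^j}-a_l^{ij}(u_l)_{x^ix^j}=a_l^{ij}(u-u_l)_{x^ix^j}+(a^{ij}-a_l^{ij})u_{x^ix^j}$, so that $\bar u_l:=u-u_l$ solves a BSPDE whose leading coefficients are $a_l,\sigma_l$ (these satisfy Assumptions \ref{ass1}--\ref{ass3} with the same $\lambda,\Lambda,\kappa$, hence the a priori constant from Theorem \ref{thm general case} is uniform in $l$) and whose source involves the coefficient differences multiplied by derivatives of the \emph{fixed} solution $u$ only. The term $\|(a^{ij}-a_l^{ij})u_{x^ix^j}\|_{n,p}$ is then handled by approximating $u$ by $\phi\in C^{\infty}_{c}$: choosing $r$ so large that $\zeta_r\phi=\phi$, Lemma \ref{lem multiplier} gives $\|(a^{ij}-a_l^{ij})\phi_{x^ix^j}\|_{n,p}\le C\|\zeta_r(a^{ij}-a_l^{ij})\|_{n,p}\|\phi\|_{B^{|n|+2+\gamma}}\rightarrow 0$ by \eqref{thm depending on parameters 1}, while the error $\|(a^{ij}-a_l^{ij})(u-\phi)_{x^ix^j}\|_{n,p}\le C\|(u-\phi)_{xx}\|_{n,p}$ is small by density of $C^{\infty}_{c}$; a final dominated convergence in $(\omega,t)$, with the fixed majorant $C\Lambda^p\|u_{xx}(t,\cdot)\|_{n,p}^p$, concludes. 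If you adopt this regrouping, the rest of your argument goes through; incidentally, this is also what the cited SPDE treatment in \cite{Krylov_99} does.
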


\begin{proof}
  Let $\bar{u}_l=u-u_l$ and $\bar{v}_l=v-v_l.$ Then we have
    \begin{equation}\label{thm depending on parameters 3}
  \left\{\begin{array}{l}
    \begin{split}
      -d \bar{u}_l (t,x)=&\big[
      a_l^{ij}(t,x)\bar{u}_{lx^i x^j}(t,x) +\sigma_l^{ik}(t,x)\bar{v}^{k}_{lx^i}(t,x)
      +f_l(\bar{u}_l,\bar{v}_l)\big]dt\\
      &-\bar{v}_l^k(t,x)dW^k_t,\quad~~~ (t,x)\in [0,T]\times \bR^{d},
    \end{split}\\
    \begin{split}
    \bar{u}_l(T,x)=\bar{G}_l(x),~~~~~~x\in \bR^{d},
    \end{split}
  \end{array}\right.
\end{equation}
where
$$ f_l(\bar{u}_l,\bar{v}_l)=(a^{ij}-a_l^{ij})u_{x^i x^j}
    +(\sigma^{ik}-\sigma^{ik}_l)v_{x^i}+F(u,v)-F_l(u-\bar{u},v-\bar{v}).
$$
Then by Theorem \ref{thm general case}, we obtain
$$\|u-u_l\|_{\sH_p^{n+2}}\leq C J_l,$$
where $C$ is independent of $l$ and
\begin{eqnarray}
  \begin{split}
J_l=&\|(a^{ij}-a_l^{ij})u_{x^i x^j}\|_{\bH_p^n}
    +\|(\sigma^{ik}-\sigma^{ik}_l)v_{x^i}\|_{\bH_p^n}+\\
    &\|F(u,v)-F_l(u,v)\|_{\bH_p^n}
    +\big(E\|G_l-G\|^p_{n+1,p}\big)^{1/p}.
  \end{split}
\end{eqnarray}
By our assumptions, we have
\begin{eqnarray}
  \begin{split}
    \limsup_{l\rightarrow \infty}J_l\leq \limsup_{l\rightarrow
    \infty}\{ \|(a^{ij}-a_l^{ij})u_{x^i x^j}\|_{\bH_p^n}
    +\|(\sigma^{ik}-\sigma^{ik}_l)v_{x^i}\|_{\bH_p^n}   \}.
  \end{split}
\end{eqnarray}

Then the following is standard (for conference, see the proof of Theorem
5.7 of \cite{Krylov_99} pp.209-210).

 For any $\phi\in C^{\infty}_{c},$ let $r$ be so large that $\phi
\zeta_r=\phi.$ Then, by Lemma \ref{lem multiplier}, we get
\begin{eqnarray}
  \begin{split}
    \|(a^{ij}-a_l^{ij})u_{x^i x^j}\|_{n,p}\leq C
    \|(u-\phi)_{x^ix^j}\|_{n,p}
    +\|(a^{ij}-a_l^{ij})\phi_{x^ix^j}\|_{n,p},
  \end{split}
\end{eqnarray}
$$\|(a^{ij}-a_l^{ij})\phi_{x^ix^j}\|_{n,p}
    =\|(a^{ij}-a_l^{ij})\zeta_r\phi_{x^ix^j}\|_{n,p}
    \leq
    C\|(a^{ij}-a_l^{ij})\zeta_r\|_{n,p}\|\phi\|_{B^{|n|+2+\gamma}},
    $$
where the constants $C$'s are independent of $r$ and $l$. Thus,
$$\limsup_{l\rightarrow
    \infty} \|(a^{ij}-a_l^{ij})u_{x^i x^j}\|_{n,p}\leq C
    \|(u-\phi)_{x^ix^j}\|_{n,p}
   \textrm{ for }(t,\omega)\in [0.T]\times\Omega,a.e., $$
and the arbitrariness of $\phi$ implies the left-hand side above is zero.
Then by Lemma \ref{lem multiplier} and the dominated convergence theorem,
we conclude that
$$\lim_{l\rightarrow \infty} \|(a^{ij}-a_l^{ij})u_{x^i x^j}\|_{\bH_p^n}
        =0.$$
Similarly, we can get $\lim_{l\rightarrow \infty}
\|(\sigma^{ik}-\sigma^{ik}_l)v_{x^i}\|_{\bH_p^n}
        =0.$
\end{proof}
\begin{rmk}
  From Lemma \ref{lem finite approximation H_pn}, it follows that
  the condition \eqref{thm depending on parameters 2} holds for any $u\in \sH_p^{n+2}$
   if and only if it is satisfied for $u(t,x)\equiv\phi,v^k(t,x)\equiv
   \phi^k$ with any $\phi,\phi^k\in C^{\infty}_{c},k=1,\dots,m.$
\end{rmk}

\begin{cor}\label{lem smooth coefficients}
Take $\zeta_l$ from Lemma \ref{lem multiplier}. Under the assumptions of
Theorem \ref{thm general case}, for $l=1,2,3,\dots,$ we define
$$(a_l,\sigma_l)=(a,\sigma)(t,\cdot)\ast \zeta_l(x),
        G_l=G\ast \zeta_l(x),
   $$
and also
$$F_l(u,v,t,x)=F(u,v,t,\cdot)\ast \zeta_l(x)
   =\int_{\bR^d}F(u(x),v(x),t,x-y)\zeta_l(y)dy .$$ Then the
assumptions of Theorem \ref{thm depending on parameters} are satisfied, and
if we take $u_l\in\sH_p^{n+2} $ as the unique solution of BSPDE \eqref{thm
depending on parameters 3}, we have $\|u-u_l\|_{\sH_p^{n+2}}\rightarrow 0$
as $l\rightarrow \infty.$
\end{cor}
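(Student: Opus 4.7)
My plan is to verify all the hypotheses of Theorem \ref{thm depending on parameters} for the mollified sequence $(a_l, \sigma_l, F_l, G_l)$; once this is done the conclusion $\|u - u_l\|_{\sH_p^{n+2}} \to 0$ is an immediate application of that theorem. The verification splits into two groups: first, showing that the mollified data satisfies Assumptions \ref{ass1}--\ref{ass4} with the same constants $\lambda, \Lambda$ and the same functions $\kappa, \varrho$; and second, establishing the convergences \eqref{thm depending on parameters 1} and \eqref{thm depending on parameters 2} together with $E\|G_l - G\|_{n+1,p}^p \to 0$.

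For the structural hypotheses, Assumption \ref{ass1} is preserved via Jensen's inequality applied to the probability density $\zeta_l$: writing $\phi_k(t,x) := \sum_i \sigma^{ik}(t,x)\xi^i$, so that $\sum_i \sigma_l^{ik}\xi^i = \phi_k \ast \zeta_l$, Jensen gives $(\phi_k \ast \zeta_l)^2 \le \phi_k^2 \ast \zeta_l$, and summing over $k$ yields $2\alpha_l^{ij}\xi^i\xi^j \le (2\alpha^{ij}\xi^i\xi^j) \ast \zeta_l$; combined with $a_l^{ij}\xi^i\xi^j = (a^{ij}\xi^i\xi^j) \ast \zeta_l$ this forces $(a_l - \alpha_l)^{ij}\xi^i\xi^j \ge \lambda|\xi|^2$. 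Assumption \ref{ass2} follows from $|a_l(t,x) - a_l(t,y)| \le \int |a(t,x-z) - a(t,y-z)| \zeta_l(z)\,dz \le \kappa(|x-y|)$ and its analogue for $\sigma_l$, and Assumption \ref{ass3} follows from Lemma \ref{lem multiplier}(ii) together with standard mollifier estimates on H\"older and Bessel potential scales. Since convolution with $\zeta_l$ commutes with $(1-\Delta)^{n/2}$ and is a contraction on $L^p(\bR^d)$, the Lipschitz condition of Assumption \ref{ass4} transfers to $F_l$ with the same $\varrho$, and both $\|F_l(0,0,\cdot,\cdot)\|_{\bH_p^n}$ and $E\|G_l\|_{n+1,p}^p$ are dominated uniformly in $l$ by their unmollified counterparts.

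To verify \eqref{thm depending on parameters 1}, I would use that $\zeta_r(a^{ij} - a_l^{ij})(t,\cdot)$ is compactly supported and uniformly bounded in $B^{|n|+\gamma}$, and converges to $0$ pointwise as $l \to \infty$; Lemma \ref{lem multiplier}(i) bridges to the $H_p^n$ norm, and dominated convergence over $\Omega \times [0,T]$ closes the argument, with the same strategy for $\sigma_l$. The convergence $E\|G_l - G\|_{n+1,p}^p \to 0$ is immediate from Lemma \ref{lem multiplier}(ii) and dominated convergence. For \eqref{thm depending on parameters 2}, I invoke the remark following Theorem \ref{thm depending on parameters}: via Lemma \ref{lem finite approximation H_pn} together with the Lipschitz bound of Assumption \ref{ass4}, the check reduces to the test-function inputs $u \equiv \phi$, $v^k \equiv \phi^k \in C^\infty_c$, for which $F(u,v,\cdot,\cdot) \in \bH_p^n$ is a fixed element and $F_l(u,v,\cdot,\cdot) = F(u,v,\cdot,\cdot) \ast \zeta_l$ converges to it in $\bH_p^n$ by the classical $L^p$ convergence of mollifiers, applied to $(1-\Delta)^{n/2} F(u,v,\cdot,\cdot)$.

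The delicate step is \eqref{thm depending on parameters 2}: the map $(u,v) \mapsto F(u,v,\cdot,\cdot)$ is only Lipschitz in the intricate manner of Assumption \ref{ass4}, and mollification in $x$ does not interact cleanly with this nonlinear dependence, so a direct estimate for arbitrary $(u,v) \in \sH_p^{n+2} \times \bH_p^{n+1}$ is awkward. The reduction to $C^\infty_c$ test inputs afforded by the remark is therefore the pivot of the argument; after it, every remaining step reduces to a textbook mollification estimate and the corollary follows by direct appeal to Theorem \ref{thm depending on parameters}.
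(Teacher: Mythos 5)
Your verification is correct and is precisely the routine check that the paper itself omits (it defers to Krylov's Corollary 5.10): preservation of Assumptions \ref{ass1}--\ref{ass4} under mollification via Jensen's inequality, the modulus $\kappa$, and the contraction property of $\ast\,\zeta_l$ on $H_p^n$, followed by the convergences \eqref{thm depending on parameters 1}--\eqref{thm depending on parameters 2} via the remark reducing to $C^\infty_c$ inputs. The only place to tighten is the passage from pointwise convergence of $\zeta_r(a-a_l)$ to convergence in $H_p^n$ for $n>0$: you should note that $\sup_x|a(t,x)-a_l(t,x)|\le\kappa(\operatorname{diam}\operatorname{supp}\zeta_l)\to 0$ uniformly, and then interpolate between $H_p^0$ and a higher-order norm controlled by the uniform $B^{|n|+\gamma}$ bound on the compact support of $\zeta_r$.
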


 As the proof of the corollary is just a verification, which is very similar to
  \cite[Corollary 5.10]{Krylov_99}, it is omitted here.

\begin{thm}
  Under the assumptions of Theorem \ref{thm general case}, let $u$ be
  the solution of BSPDE \eqref{BSPDE} for $n=0.$ And further, assume that
  $$ F(u,v,t,x)=b^i(t,x)u_{x^i}+c_0(t,x)u(t,x)+c_k(t,x)v^k(t,x)+f(t,x),
  $$
  where $b^i(t,x),c_0(t,x),c_k(t,x),k=1,\dots,m$ are bounded $\sP\times\cB(\bR^d)$
  -measurable functions on $[0,T]\times \Omega \times \bR^d$ and $f(t,x)\geq 0.$ Also
  assume that $G\geq 0$ almost surely. Then $u(t,\cdot)\geq 0$ for
  all $t \in [0,T]$ almost surely.
\end{thm}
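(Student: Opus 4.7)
The strategy is to argue by duality against a forward adjoint SPDE, invoking Krylov's nonnegativity (comparison) theorem for superparabolic forward SPDEs. First, by Corollary \ref{lem smooth coefficients} together with Theorem \ref{thm depending on parameters}, one mollifies $(a^{ij},\sigma^{ik},b^i,c_0,c_k,f,G)$ against $\zeta_l$; convolution preserves $G_l\geq 0$ and $f_l\geq 0$, and the approximating solutions $u_l$ converge to $u$ in $\sH_p^{2}$. Extracting an a.e.-convergent subsequence and using the a.s.\ continuity $u\in C([0,T],L^p(\bR^d))$ furnished by Theorem \ref{thm general case} and Remark \ref{rmk 2}, it suffices to establish nonnegativity in the case of smooth coefficients and data.

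Assume henceforth all coefficients and data are smooth. Fix $t_0\in[0,T)$ and a nonnegative $\phi\in C^\infty_c(\bR^d)$, and consider the forward adjoint SPDE on $[t_0,T]$
\begin{equation*}
d\psi(s,x)=\bigl[(a^{ij}\psi)_{x^ix^j}-(b^i\psi)_{x^i}+c_0\psi\bigr](s,x)\,ds+\bigl[c_k\psi-(\sigma^{ik}\psi)_{x^i}\bigr](s,x)\,dW^k_s,
\end{equation*}
with $\psi(t_0,\cdot)=\phi$. The top-order coefficients are $(a^{ij},-\sigma^{ik})$, so Assumption \ref{ass1} holds with the same $\lambda$; Krylov's $L^q$-theory \cite{Krylov_99} provides a unique smooth solution $\psi$, and because $\phi\geq 0$, Krylov's maximum principle for superparabolic forward SPDEs yields $\psi(s,\cdot)\geq 0$ almost surely for all $s\in[t_0,T]$.

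Apply the It\^o product formula to the pairing $(u(s,\cdot),\psi(s,\cdot))$ on $[t_0,T]$. The drift and diffusion of $\psi$ are engineered so that, after two integrations by parts in $\psi$, the contributions of $a^{ij}u_{x^ix^j}$, $\sigma^{ik}v^k_{x^i}$, $b^iu_{x^i}$, $c_0u$ in $-du$, together with the cross-variation $\int v^k(c_k\psi-(\sigma^{ik}\psi)_{x^i})\,dx$, cancel exactly against the corresponding contributions of $u\,d\psi$; only $-(\psi,f)\,ds$ survives in the drift. Integrating from $t_0$ to $T$ and taking conditional expectation given $\sF_{t_0}$ kills the remaining stochastic-integral martingale (genuine by Remark \ref{rmk of bsde} applied to the test function $\psi(s,\cdot)$), yielding
\begin{equation*}
(u(t_0,\cdot),\phi)=E\bigl[(G,\psi(T,\cdot))\,\big|\,\sF_{t_0}\bigr]+E\Big[\int_{t_0}^{T}(\psi(s,\cdot),f(s,\cdot))\,ds\,\Big|\,\sF_{t_0}\Big]\geq 0,\quad\textrm{a.s.}
\end{equation*}
Since $\phi\in C^\infty_c$ with $\phi\geq 0$ was arbitrary, running the identity over a countable dense family yields $u(t_0,\cdot)\geq 0$ almost surely in the sense of distributions, hence as an element of $L^p(\bR^d)$; the a.s.\ continuity $t\mapsto u(t,\cdot)$ in $L^p$ then propagates the bound to all $t\in[0,T]$ simultaneously.

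The main obstacle is justifying the It\^o product formula and the integration-by-parts identification of the drift under only $\sH_p^2$ regularity; this is precisely why the mollification reduction is performed first, after which all manipulations are classical (covered e.g.\ by the Krylov--Rozovskii Hilbert-valued It\^o formula), and the passage back to the original $u$ is handled by Corollary \ref{lem smooth coefficients} and Theorem \ref{thm depending on parameters}. A secondary technical point is to verify that the forward adjoint SPDE falls within the scope of Krylov's comparison theorem, namely that its lower-order coefficients inherit predictability and the required boundedness from the hypotheses on $(b^i,c_0,c_k)$, which is immediate once the data have been smoothed.
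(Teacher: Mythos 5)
Your proposal is correct in outline, but it takes a genuinely different route for the decisive step. The reduction is the same as the paper's: mollify the coefficients and data (Corollary \ref{lem smooth coefficients}), note that convolution preserves $f\geq 0$ and $G\geq 0$, pass to the limit via the stability result (Theorem \ref{thm depending on parameters}), and upgrade the a.e.\ conclusion to all $t$ using the a.s.\ continuity of $t\mapsto u(t,\cdot)$ in $L^p$. Where you diverge is in how nonnegativity is obtained for the smoothed problem: the paper simply observes that after smoothing one is in the $L^2$ setting with arbitrary regularity (via Corollary \ref{cor p and q}), so the hypotheses of the existing comparison theorems for BSPDEs in \cite{DuMeng09}, \cite{Hu_Ma_Yong02} and \cite{Tang_05} are met, and it imports $u^l\geq 0$ from there. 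You instead prove the comparison directly by duality: you construct the forward adjoint SPDE started from a nonnegative test function, invoke a maximum principle for superparabolic forward SPDEs to get $\psi\geq 0$, and read off $(u(t_0,\cdot),\phi)=E[(G,\psi(T))\mid\sF_{t_0}]+E[\int_{t_0}^T(\psi,f)\,ds\mid\sF_{t_0}]\geq 0$. Your cancellation bookkeeping in the It\^o product formula is right (in particular the cross-variation term $\int v^k(c_k\psi-(\sigma^{ik}\psi)_{x^i})\,dx$ exactly absorbs the $\sigma^{ik}v^k_{x^i}$ and $c_kv^k$ contributions, and the diffusion coefficient $-\sigma^{ik}$ of the adjoint leaves $\alpha^{ij}$ and hence the superparabolicity constant unchanged). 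What your approach buys is self-containedness and transparency --- the positivity mechanism is exhibited rather than cited --- and it is in the same spirit as the duality arguments already used in the proof of Theorem \ref{thm independent space}. What it costs is two technical debts the paper avoids: (a) you must pin down a precise reference for the maximum principle for forward SPDEs whose drift contains the divergence-form terms $(a^{ij}\psi)_{x^ix^j}$, $-(b^i\psi)_{x^i}$ and whose diffusion contains the zeroth-order term $c_k\psi$; such results exist in Krylov's work but are not stated in the portion of \cite{Krylov_99} this paper relies on; and (b) the martingale property of the stochastic integral in the product formula is not covered by Remark \ref{rmk of bsde} as stated, since the integrand there is a fixed deterministic test function while yours is the random, time-dependent $\psi(s,\cdot)$; a short localization plus the $L^2$-regularity of the smoothed $u^l$ and $\psi$ closes this, but it should be said. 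Neither issue is a gap in substance, only in citation and justification.
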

\begin{proof}
    First, we take two nonnegative sequences
    $(f^l)_{l\geq 1}$ in $L^{\infty}(\Omega\times[0,T],\sP,H^0_2)
    \cap \bH^0_p$  and $(G^l)_{l\geq 1} $ in $L^2(\Omega,\sF_T,H^1_2)\cap
    L^p(\Omega,\sF_T,H^1_p)$
    such that $\|f^l-f\|_{\bH^0_p}\rightarrow 0 $ and
    $\|G^l-G\|_{L^p(\Omega,\sF_T,H^1_p)}\rightarrow 0 $ as
    $l\rightarrow \infty.$ Next, Corollary \ref{lem smooth coefficients} allows
     us to assume  that $G^l,$ $f^l$ and all the other coefficients are infinitely
   differentiable in $x.$

    After those above, by Theorem \ref{thm depending on parameters}
     we get an approximating solutions $u^l$ of $u.$ In this case
     the  assumptions of Theorem \ref{thm general case} are satisfied for $p=2,$
    and any $n\geq 0.$ Then, Corollary \ref{cor p and q}, yields $u^l\in
     \sH^r_2$ for any $r\geq 0.$ Furthermore, in this case the assumptions
      of \cite[Theorem 5.1]{DuMeng09}, \cite[Theorem 6.1]{Hu_Ma_Yong02} and
      \cite[Theorem 6.1]{Tang_05} are all satisfied, and the comparison
      theorems
     there all imply $u^l\geq 0$ $(a.e. ~(t,x,\omega)).$
    By taking limits, we get $u\geq 0$ $(a.e. ~(t,x,\omega)).$
    On the other hand, in light of Lemma \ref{lem BSDE}, it
    follows that $u\in C([0,T],H_p^0)$ $a.s.$, which implies
    $u\geq 0$ (at least for a modification of $u$) for all $t\in [0,T]$ almost surely.
\end{proof}

\subsection{$L^p$ theory for $p>2$}
When $p<2$, the assertion of Lemma \ref{Harmonic result 1} is not
true in general. This fact makes the $L^p$-theory we have
established in Section 5 require the assumption $p\in (1,2]$ and
Krylov's seminal work (\cite{Kryl96,Krylov_99}) require $p\in
[2,\infty).$ However, if we consider SPDEs~\eqref{p>2 simple spde}
where the diffusion is homogeneous in the unknown variable, the
harmonic result (Lemma \ref{Harmonic result 1}) can be avoided,
which could allow us to get further results.

Consider the following BSPDE
\begin{equation}\label{p>2 BSPDE}
  \left\{\begin{array}{l}
    \begin{split}
      -d u (t,x)=&\big[
      a^{ij}(t,x)u_{x^i x^j}(t,x) +\sigma^{ik}(t)v^{k}_{x^i}(t,x)
      +F(u,\sigma^i u_{x^i}+v,t,x)\big]dt\\
      &-v^k(t,x)dW^k_t,\quad~~~ (t,x)\in [0,T]\times \bR^{d};
    \end{split}\\
    \begin{split}
    u(T,x)=G(x),~~~~~~x\in \bR^{d}.
    \end{split}
  \end{array}\right.
\end{equation}
\begin{defn}\label{p>2 defn}
  We call $(u,v)$ a solution pair of BSPDE \eqref{p>2 BSPDE} in
  $\bH_p^{n+2}\times\bH_{p,2}^{n}$ if $u\in\bH_p^n,$
  $v(\cdot,\cdot+\int_0^{\cdot}\sigma^k(s)dW_s^k)\in \bH_{p,2}^n$
  and for any $\phi\in C^{\infty}_{c},$ the
  equality
\begin{equation}\label{p>2 defn equation}
  \begin{split}
  (u(\tau,\cdot),\phi)=&(G,\phi)+\int_{\tau}^{T}( a^{ij}(t,\cdot)u_{x^i x^j}(t,\cdot) +
       \sigma^{ik}(t)v^{k}_{x^i}(t,\cdot)
          +F(u,\sigma^i u_{x^i}+v,t,\cdot),\phi)dt\\
  &-\int_{\tau}^{T}(v^k(t,\cdot),\phi)dW^k_t,
  \quad ~\forall (t, \phi) \in [0,T)\times C^{\infty}_{c}
  \end{split}
\end{equation}
holds for all $\tau\in [0,T]$ with probability 1.
\end{defn}
For the case $p>2,$ we have presented some results in Remark
\ref{p>2 simple case} on BSPDEs with constant-field-valued
coefficients. Through a procedure similar to the case $p\in(1,2]$ we
get the following result.

\begin{prop}
  For $p>2$ and $n\in\bR,$
  suppose that $a$ and $\sigma$ satisfy  Assumption \ref{ass1}-\ref{ass3} with
  $\sigma$ being invariant in the space variable. Let
  $F(0,0,\cdot,\cdot)\in\bH_p^n.$ For any $(h,g)\in\bH_p^{n+2}\times\bH_{p,2}^n,$
  $F(h,g,t,\cdot)$ is an $H_p^n$-valued $\sP$-measurable process such that
  there is a continuous and decreasing function
 $\varrho:(0,\infty)\rrow[0,\infty)$ and a constant $L>0$ such that
 for any $\eps>0,$   we have
\begin{equation}
  \begin{split}
  &\|F(\bar{h},\bar{g},\cdot,\cdot)-F(h',g',\cdot,\cdot)\|_{\bH_p^n(t)}  \\
  \leq &\eps \|\bar{h}-h'\|_{\bH_p^{n+2}(t)}+
        \varrho (\eps)\|\bar{h}-h'\|_{\bH_p^{n}(t)}+L\|\bar{g}-g'\|_{\bH_{p,2}^n(t)}
        ,    \\
         &\quad ~\bar{h},h'\in \bH_{p}^{n+2} \textrm{ and } \bar{g},g'\in \bH_{p,2}^{n},
  \end{split}
\end{equation}
 holds for any $t\in [0,T).$
 Consider $G\in L^p(\Omega,\sF_T,H_p^{n+1}).$ Then BSPDE \eqref{p>2 BSPDE}
 has a unique solution pair $(u,v)$ in $\bH_p^{n+2}\times\bH_{p,2}^n.$ For
 this solution pair, we have
 $$\|u\|_{\bH_p^{n+2}}+\|v'\|_{\bH_{p,2}^n}\leq
    C(T,n,\kappa,\varrho,d,p,\lambda,\Lambda)
        \left(\|F(0,0,\cdot,\cdot)\|_{\bH_p^n}+\|G\|_{L^p(\Omega,\sF_T,H_p^{n+1})}
          \right)$$
 where $v'(t,x)=v(t,x+\int_0^t\sigma^k(s)dW_s^k),$ $(t,x)\in [0,T]\times\bR^d. $
\end{prop}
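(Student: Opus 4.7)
My plan is to proceed along the template of Sections~5.2--5.3, with Remark~\ref{p>2 simple case} providing for $p>2$ the same linear building block that Theorem~\ref{thm independent space} provided in the case $p\in(1,2]$. The essential structural simplification that makes the theory work for $p>2$ is the It\^o--Wentzell reduction, which is available precisely because $\sigma=\sigma(t)$ is independent of $x$. It eliminates the cross-term $\sigma^{ik}v^{k}_{x^{i}}$ in the drift and, simultaneously, converts the nonlinearity $F(u,\sigma^{i}u_{x^{i}}+v,t,x)$ into a function of the transformed unknown and its new diffusion coefficient alone. This explains both why $F$ is written with the argument $\sigma^{i}u_{x^{i}}+v$ (rather than $v$ alone) and why the a priori estimate is phrased in terms of the shifted diffusion $v'(t,x)=v(t,x+X_{t})$ with $X_{t}:=\int_{0}^{t}\sigma^{k}(s)\,dW^{k}_{s}$.

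First I would reduce to $G\equiv 0$ by subtracting the solution of $-d\bar{u}=\Delta\bar{u}\,dt-\bar{v}^{k}\,dW^{k}_{t}$, $\bar{u}(T)=G$, which is supplied by the linear part of Remark~\ref{p>2 simple case}. Setting $\tilde{u}(t,x):=u(t,x+X_{t})$ and $\tilde{w}^{k}(t,x):=v^{k}(t,x+X_{t})+\sigma^{ik}(t)u_{x^{i}}(t,x+X_{t})$, the It\^o--Wentzell formula (cf.~\cite{Krylov_09}) yields
\begin{equation*}
-d\tilde{u}=\bigl[\tilde{a}^{ij}(t,x)\tilde{u}_{x^{i}x^{j}}+F(\tilde{u},\tilde{w},t,x+X_{t})\bigr]dt-\tilde{w}^{k}\,dW^{k}_{t},\quad \tilde{u}(T,x)=G(x+X_{T}),
\end{equation*}
with $\tilde{a}^{ij}(t,x):=(a-\alpha)^{ij}(t,x+X_{t})$, which is uniformly elliptic by Assumption~\ref{ass1}. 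Note that $v'=\tilde{w}-\sigma^{i}(t)\tilde{u}_{x^{i}}$, so an estimate on $\tilde{w}$ in $\bH_{p,2}^{n}$ together with a bound on $\tilde{u}_{x}$ in $\bH_{p}^{n+1}$ yields the desired bound on $v'$; moreover, the spatial shift is measure-preserving, so the $\bH_{p}^{n+2}$-norms of $u$ and $\tilde{u}$ coincide and $\|F(0,0,\cdot,\cdot+X_{\cdot})\|_{\bH_{p}^{n}}=\|F(0,0,\cdot,\cdot)\|_{\bH_{p}^{n}}$.

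The transformed equation has no $\sigma v_{x}$ cross-term, so in the frozen-coefficient case $\tilde{a}=\tilde{a}(t)$ the linear $L^{p}$ theory is delivered by Remark~\ref{p>2 simple case}; one then passes to space-dependent $\tilde{a}(t,x)$ by the perturbation/continuity argument of Theorem~\ref{thm perturbation} and the partition-of-unity freezing-of-coefficients argument of Theorem~\ref{thm general case}, both of which go through essentially unchanged (the space-invariance of $\sigma$ has simply been reduced to $\sigma\equiv 0$ after the transformation). The nonlinearity is absorbed by the method of continuity as in Step~2 of the proof of Theorem~\ref{thm perturbation}: the $\eps$-small-gain term in the $\tilde{u}$-direction is moved to the left-hand side, the $\varrho(\eps)$-bounded lower-order piece is controlled by a backward-time Gronwall argument, and the fixed Lipschitz constant $L$ acting on $\tilde{w}$ is handled by iteration on sufficiently short subintervals of $[0,T]$.

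The main obstacle is the a priori estimate on $\|\tilde{w}\|_{\bH_{p,2}^{n}}$. The $p\in(1,2]$ reasoning of Lemma~\ref{BSDE lem2} is not directly applicable; however, the $L^{p}$-BSDE inequalities of Briand et al.~\cite{Hu_2002} yield a pointwise-in-$x$ analogue of \eqref{BSDE lem2_0}, which integrates up over $\bR^{d}$ once the cross-term $\sigma v_{x}$ has been removed by the It\^o--Wentzell transformation. What must be verified carefully is that, at no point in establishing the linear estimate for the transformed equation, does the argument fall back on an $L^{p'}$ SPDE theory with $p'=p/(p-1)<2$ (which is unavailable from \cite{Kryl96,Krylov_99}, as Lemma~\ref{Harmonic result 1} fails for $p'<2$). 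The point of the It\^o--Wentzell reduction is precisely that it eliminates this need: the dual test SPDE relevant to the transformed equation has no stochastic integral against $dW$ in its principal part, so heat-kernel and Sobolev energy arguments suffice, exactly in the spirit of the discussion in Remark~\ref{p>2 simple case}.
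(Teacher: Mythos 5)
Your proposal follows essentially the same route as the paper's own (sketched) proof: the It\^o--Wentzell change of variables $\zeta(t,x)=u(t,x+\int_0^t\sigma^k(s)\,dW_s^k)$ to eliminate the space-invariant $\sigma$, the resulting transformed equation with leading coefficient $a(\cdot,\cdot+X_t)-\alpha$ and diffusion $\sigma^{i}\zeta_{x^i}+v'$, the $v$-estimate via the Banach-space BSDE lemmas of Section~3 (which, note, are already stated and proved for all $p\in(1,\infty)$, so your caution about Lemma~\ref{BSDE lem2} is unnecessary), and the Section~5 machinery (Remark~\ref{p>2 simple case}, perturbation, partition of unity, method of continuity) for the rest. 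Your write-up is in fact more detailed than the paper's sketch and fills it in correctly.
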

Here, we only give a sketch of the proof.
 First, take $\zeta(t,x)=u(t,x+\int_0^t\sigma^k(s)dW_s^k).$
 Applying formally the It\^o-Wentzell formula (c.f \cite{Krylov_09}),
   we can rewrite
 the BSPDE \eqref{p>2 BSPDE}
\begin{equation}
  \left\{\begin{array}{l}
    \begin{split}
      -d \zeta (t,x)=&\big[
      \bar{a}^{ij}(t,x)\zeta_{x^i x^j}(t,x)
      +F(\zeta,\sigma^i\zeta_{x^i}+ v',t,x+\int_0^t\sigma^k(s)dW_s^k)\big]dt\\
      &-(\sigma^{ki}\zeta_{x^i}(t,x)+ v'^k(t,x))dW^k_t,\quad~~~ (t,x)\in [0,T]\times \bR^{d};
    \end{split}\\
    \begin{split}
    \zeta(T,x)=\bar{G}(x),~~~~~~x\in \bR^{d},
    \end{split}
  \end{array}\right.
\end{equation}
where
$\bar{a}(t,x):=a(t,x+\int_0^t\sigma^k(s)dW^k_s)-\frac{1}{2}\sigma\sigma^{\mathcal
{T}}$ and $\bar{G}=G(x+\int_0^T\sigma^k(s)dW^k_s).$ Actually the estimate
about $v$ are deduced from Lemma \ref{lem BSDE}. The proof of the other
assertions are very similar to those seen in Section 5.3.
\bibliographystyle{siam}

\begin{thebibliography}{10}

\bibitem{Bensousan_83}
{\sc A.~Bensoussan}, {\em Maximum principle and dynamic programming
approaches
  of the optimal control of partially observed diffusions}, Stochastics, 9
  (1983), pp.~169--222.

\bibitem{Bismut_76}
{\sc J.~Bismut}, {\em Linear quadradic optimal stochastic control with
random
  coefficients}, SIAM J.control Optim., 14 (1976), pp.~414--444.

\bibitem{Bismut_78}
\leavevmode\vrule height 2pt depth -1.6pt width 23pt, {\em contr\^{o}l des
  syst\`{e}mes lin\'{e}ares quatratiques}, in Applications de l'int\'{e}grale
  Stochastique, S\'{e}minaire de probabilit\'{e} XII, vol.~649 of Lecture notes
  in Mathematics, Berlin, Heidelberg, New York, Springer, 1978, pp.~180--264.

\bibitem{Bismut_78_2}
\leavevmode\vrule height 2pt depth -1.6pt width 23pt, {\em An introductory
  approach to duality in optimal stochastic control}, SIAM Riew, 20 (1978),
  pp.~62--78.

\bibitem{Hu_2002}
{\sc P.~Briand, B.~Delyon, Y.~Hu, E.~Pardoux, and L.~Stoica}, {\em
  $\textrm{L}^p$ solutions of backward stochastic differential equations},
  Stochastic Process. Appl., 108 (2003), pp.~109--129.

\bibitem{DelbaenTang_10}
{\sc F.~Delbaen and S.~Tang}, {\em Harmonic analysis of stochastic
equations
  and backward stochastic differential equations}, Probab. Theory Relat.
  Fields, 146 (2010), pp.~291--336.

\bibitem{DuMeng09}
{\sc K.~Du and Q.~Meng}, {\em Notes on the cauchy problem for backward
  stochastic partial differential equations},  (2009).
\newblock arXiv:0911.0077v2[math.PR].

\bibitem{EnglezosKaratzas09}
{\sc N.~Englezos and I.~Karatzas}, {\em \textrm{Utility maximization with
habit
  formation: dynamic programming and stochastic PDEs}}, SIAM J. Control Optim.,
  48 (2009), pp.~481--520.

\bibitem{Loukas_book}
{\sc L.~Grafakos}, {\em Classical and Modern Fourier Analysis}, China
Machine
  Press, 2005.

\bibitem{Hu_Ma_Yong02}
{\sc Y.~Hu, J.~Ma, and J.~Yong}, {\em On semi-linear degenerate backward
  stochastic partial differential equations}, Probab. Theory Relat. Fields, 123
  (2002), pp.~381--411.

\bibitem{Hu_Peng_91}
{\sc Y.~Hu and S.~Peng}, {\em Adapted solution of a backward semilinear
  stochastic evolution equations}, Stoch. Anal. Appl., 9 (1991), pp.~445--459.

\bibitem{HandbookofBanach}
{\sc W.~Johnson and J.~Lindenstrauss}, eds., {\em Handbook of the Geometry
of
  Banach Spaces}, vol.~1, North-Holland, 2001.

\bibitem{Karoui_Peng_Quenez}
{\sc N.~E. Karoui, S.~Peng, and M.~C. Quenez}, {\em Backward stochastic
  differential equations in finance}, Math. Finance, 7 (1997), pp.~1--71.

\bibitem{Krylov_94}
{\sc N.~V. Krylov}, {\em \textrm{A generalization of the Littlewood-Paley
  inequality with applications to parabolic equations}}, Ulam Quarterly, 2
  (1994), pp.~16--26.

\bibitem{Kryl96}
\leavevmode\vrule height 2pt depth -1.6pt width 23pt, {\em On
  $\textrm{L}_p$-theory of stochastic partial differential equations}, SIAM J.
  Math. Anal., 27 (1996), pp.~313--340.

\bibitem{Krylov_99}
\leavevmode\vrule height 2pt depth -1.6pt width 23pt, {\em \textrm{An
analytic
  approach to SPDEs}}, in Stochastic Partial Differential Equations: Six
  Perspectives, vol.~64 of Mathematic Surveys and Monographs, AMS, Providence,
  RI, 1999, pp.~185--242.

\bibitem{Krylov_09}
\leavevmode\vrule height 2pt depth -1.6pt width 23pt, {\em On the
  \textrm{I}t$\hat{\textrm{o}}$-\textrm{W}entzell formula for
  distribution-valued processes and related topics},  (2009).
\newblock arXiv:0904.2752v1.

\bibitem{Krylov_Rozovskii81}
{\sc N.~V. Krylov and B.~L. Rozovskii}, {\em Stochastic evolution
equations},
  J. Sov. Math., 16 (1981), pp.~1233--1277.

\bibitem{Ladyzhenskaia_68}
{\sc O.~A. Ladyzhenskaia, V.~A. Solonnikov, and N.~N. Ural'tceva}, {\em
Linear
  and Quasi-linear Equations of Parabolic Type}, AMS, Providence, 1968.

\bibitem{ParPeng_90}
{\sc E.~Pardoux and S.~Peng}, {\em Adapted solution of a backward
stochastic
  differential equation}, Systems Control Lett., 14 (1990), pp.~55--61.

\bibitem{Peng_92}
{\sc S.~Peng}, {\em Stochastic
  \textrm{H}amilton-\textrm{J}acobi-\textrm{B}ellman equations}, SIAM J.
  Control Optim., 30 (1992), pp.~284--304.

\bibitem{Tang_98}
{\sc S.~Tang}, {\em The maximum principle for partially observed optimal
  control of stochastic differential equations}, SIAM J. Control Optim., 36
  (1998), pp.~1596--1617.

\bibitem{Tang_05}
\leavevmode\vrule height 2pt depth -1.6pt width 23pt, {\em Semi-linear
systems
  of backward stochastic partial differential equations in $\mathbb{R}^n$},
  Chinese Annals of Mathematics, 26B (2005), pp.~437--456.

\bibitem{TRiebel_83}
{\sc H.~Triebel}, {\em Theory of Function Spaces}, vol.~78 of Monographs in
  Mathematics, Birkh$\ddot{a}$user, Basel, Boston, Stuttgart, 1983.

\bibitem{Triebel_92}
\leavevmode\vrule height 2pt depth -1.6pt width 23pt, {\em Theory of
Function
  Spaces II}, vol.~84 of Monographs in Mathematics, Birkh$\ddot{a}$user, Basel,
  Boston, Stuttgart, 1992.

\bibitem{Zhou_92}
{\sc X.~Zhou}, {\em A duality analysis on stochastic partial differential
  equations}, Journal of Functional Analysis, 103 (1992), pp.~275--293.

\bibitem{Zhou_93}
\leavevmode\vrule height 2pt depth -1.6pt width 23pt, {\em On the necessary
  conditions of optimal controls for stochastic partial differential
  equations}, SIAM J. Control Optim., 31 (1993), pp.~1462--1478.

\end{thebibliography}

\end{document}